\def\hX{\widehat{X}}
\def\hx{\widehat{x}}
\def\rank{{\rm rank}}
\def\Tr{{\rm Tr}}
\def\tR{\tilde{R}}
\def\reals{{\mathbb R}}
\def\cliq{{\sf cliq}}
\def\hW{\hat{W}}
\def\tX{\tilde{X}}
\def\C{\mathcal{C}}
\def\Ci{\mathcal{C}_i}
\def\Cj{\mathcal{C}_j}
\def\Q{\mathcal{Q}}
\def\L{\mathcal{L}}
\def\bl{\beta^{(\ell)}}
\def\hb{\hat{\beta}}
\def\txl{\tilde{x}^{(\ell)}}
\def\xl{x^{(\ell)}}
\def\E{\mathbb{E}}
\def\P{\mathbb{P}}
\def\cP{\mathcal{P}}
\def\cR{\mathcal{R}}
\def\ve{{\varepsilon}}
\begin{document}
%
%
%
%
%
%
%
%
%
%
%
%
%
%
%
%
%
%
%
%
\title{Localization from Incomplete Noisy Distance Measurements}

\author{Adel Javanmard${}^{*}$  \and Andrea Montanari
\footnote{Department of Electrical Engineering, Stanford University}
\footnote{Department of Statistics, Stanford University}}
\date{}

\newtheorem{thm}{Theorem}[section]

\newtheorem{prepro}{{\bf Proposition}}[section]
\newenvironment{pro}{\begin{prepro}{\hspace{-0.5
               em}{\bf}}}{\end{prepro}}

\newtheorem{precor}{{\bf Corollary}}[section]
\newenvironment{cor}{\begin{precor}{\hspace{-0.5
               em}{\bf}}}{\end{precor}}

\newtheorem{preconj}{{\bf Conjecture}}
\newenvironment{conj}{\begin{preconj}{\hspace{-0.5
               em}{\bf}}}{\end{preconj}}

\newtheorem{preremark}{{\bf Remark}}[section]
\newenvironment{remark}{\begin{preremark}\rm{\hspace{-0.5
               em}{\bf}}}{\end{preremark}}

\newtheorem{predef}{{\bf Definition}}
\newenvironment{define}{\begin{predef}\rm{\hspace{-0.5
               em}{\bf}}}{\end{predef}}

\newtheorem{prelem}{{\bf Lemma}}[section]
\newenvironment{lem}{\begin{prelem}{\hspace{-0.5
               em}{\bf}}}{\end{prelem}}
               
\newtheorem{preclaim}{{\bf Claim}}[section]
\newenvironment{claim}{\begin{preclaim}{\hspace{-0.5
               em}{\bf}}}{\end{preclaim}}

\maketitle

\begin{abstract}
We consider the problem of positioning a cloud of points in the Euclidean space $\mathbb{R}^d$, using noisy measurements of a subset of pairwise distances. This task has applications in various areas, such as sensor network localization and reconstruction of protein conformations from NMR measurements. Also, it is closely related to dimensionality reduction problems and manifold learning, where the goal is to learn the underlying global geometry of a data set using local (or partial) metric information. Here we propose a reconstruction algorithm based on semidefinite programming. For a random geometric graph model  and uniformly bounded noise, we provide a precise characterization of the algorithm's performance: In the noiseless case, we find a radius $r_0$ beyond which the algorithm reconstructs the exact positions (up to rigid transformations). In the presence of noise, we obtain upper and lower bounds on the reconstruction error that match up to a factor that depends only on the dimension $d$, and the average degree of the nodes in the graph.
\end{abstract}

\section{Introduction}
\subsection{Problem Statement}
Given a set of $n$ nodes in $\reals^d$, the  \emph{localization}
problem requires to reconstruct the positions of the nodes from a set
of pairwise measurements $\tilde{d}_{ij}$ for $(i,j)\in E\subseteq
\{1,\dots,n\}\times\{1,\dots,n\}$.
An instance of the problem is therefore given by the graph $G=(V,E)$,
$V=\{1,\dots,n\}$, and the vector of distance measurements
$\tilde{d}_{ij}$ associated to the edges of this graph.

In this paper we consider the random geometric graph model $G(n,r) =
(V, E)$ whereby the  $n$ nodes in $V$ are independent and uniformly
random in the $d$-dimensional hypercube 
$[-0.5,0.5]^d$, and $E \in V \times V$ is a set of edges that connect
the nodes which are close to each other.
More specifically  we let $(i,j) \in E$ if and only if $d_{ij} = \|x_i -x_j\| \leq r$. For each edge $(i,j) \in E$, $\tilde{d}_{ij}$ denotes the measured distance between nodes $i$ and $j$. Letting $z_{ij}\equiv\tilde{d}^2_{ij} -d^2_{ij}$
the measurement error, we will study a \emph{``worst case model''},
in which the errors $\{z_{ij}\}_{(i,j)\in E}$ are arbitrary but uniformly bounded 
$|z_{ij}| \leq \Delta$.
We propose an algorithm for this problem based on semidefinite
programming and provide a rigorous analysis of its 
performance, focusing in particular on its robustness properties. 

Notice that the positions of the nodes can only be determined up to
rigid transformations (a combination of rotation, reflection and
translation) of the nodes, because the inter point distances are
invariant to rigid transformations. For future use, we introduce a formal definition of rigid transformation. Let $X \in \reals^{n \times d}$
be the matrix whose $i^{th}$ row, $x_i^T \in \reals^d$, is the coordinate of node $i$. Further, let ${\sf O}(d)$ denote the orthogonal group of $d \times d$ matrices. A set of positions $Y\in \reals^{n \times d}$ is a rigid transform of $X$, if there exists a $d$-dimensional shift vector $s\in \reals^d$ and an orthogonal matrix $O \in {\sf O}(d)$ such that 
\begin{eqnarray}
Y = XO + u s^T.
\end{eqnarray}
Throughout $u \in \mathbb{R}^n$ is the all-ones vector. Therefore, $Y$ is obtained as a result of first rotating (and/or reflecting) nodes in position $X$ by matrix $O$ and then shifting by $s$. Also, two position matrices $X$ and $Y$ are called equivalent up to rigid transformation, if there exists $O \in {\sf O}(d)$ and a shift $s \in \reals^d$ such that $Y = XO+us^T$. We use the following metric, similar to the one defined in~\cite{Oh:Localization}, to evaluate the distance between the original position matrix $X \in \mathbb{R}^{n \times d}$ and the estimation $\hX \in \mathbb{R}^{n \times d}$. Let $L = I -(1/n)uu^T$ be the centering matrix . Note that $L$ is an $n \times n$ symmetric matrix of rank $n-1$ which eliminates the contribution of the translation, in the sense that $LX = L(X+us^T)$ for all $s\in \reals^d$.  Furthermore, $LXX^TL$ is invariant under rigid transformation and $LXX^L  = L\widehat{X}\widehat{X}^TL$ implies that $X$ and $\widehat{X}$ are equal up to rigid transformation. The metric is defined as 
\begin{equation}
d(X,\hX) \equiv \frac{1}{n^2} \|LXX^TL - L\hX\hX^TL\|_1.
\end{equation}
This is a measure of the average reconstruction error per point, when
$X$ and $\hX$ are aligned optimally.
To get a better intuition about this metric,
consider the case in which all the entries of $LXX^TL - L\hX\hX^TL$ are roughly of the
same order. Then
\begin{eqnarray*}
d(X,\hX) \approx d_2(X,\hX) = \frac{1}{n} \|LXX^TL -
L\hX\hX^TL\|_F\, .
\end{eqnarray*}
Denote by $Y=\hX-X$ the estimation error, and assume without loss of
generality  that both $X$ and $\hX$ are centered.
Then for small $Y$, we have
\begin{eqnarray*}
 d_2(X,\hX) =  \frac{1}{n} \|XY^T +
YX^T+YY^T\|_F   \approx \frac{1}{n} \|XY^T +
YX^T\|_F \\
\stackrel{(a)}{\ge } \frac{C}{\sqrt{n}} \|Y\|_F
 = C\left\{\frac{1}{n}\sum_{i=1}^n \|\hx_i-x_i\|^2\right\}^{1/2}\, ,
\end{eqnarray*}  
where the bound$(a)$ holds with high probability for a
suitable constant $C$, if $X$ is distributed according to our 
model.\footnote{Estimates of this type will be repeatedly proved in the following .}

\smallskip
\noindent \textbf{Remark.} Clearly, connectivity of $G$ is a necessary assumption for the localization problem to be solvable. It is a well known result that  
the graph $G(n,r)$ is connected w.h.p. if $K_d r^d > (\log{n} + c_n)/n
$, where $K_d$ is the volume of the $d$-dimensional unit ball  and
$c_n \to \infty$ \cite{Penrose:RGG}. Vice versa, the graph is
disconnected with positive probability 
 if  $K_d r^d \le (\log{n} + C)/n $ for some constant $C$. Hence, we focus on the regime where $r \geq \alpha (\log{n}/n)^{1/d}$ for some constant $\alpha$. We further notice that, under the random geometric graph
model, the  configuration of the points is almost surely
\emph{generic}, in the sense that the coordinates  do not satisfy any nonzero polynomial equation with integer coefficients.

\subsection{Algorithm and main results}
\label{subsec:algorithm}
The following algorithm uses semidefinite programming (SDP) to solve the localization problem.
\begin{center}
     \begin{tabular}[c]{l}
     \hline
     \textbf{Algorithm} SDP-based Algorithm for Localization\\
     \hline
     \textbf{Input:} dimension $d$, distance measurements $\tilde{d_{ij}} $\\ for $(i,j) \in E$,
     bound on the measurement noise $\Delta$\\
     \textbf{Output:} estimated coordinates in $\mathbb{R}^d$\\
     1:\quad Solve the following SDP problem:\\
   
      \begin{tabular} {l l}
       \qquad minimize &  $\Tr(Q)$\\
       \qquad s.t. & $\left | \langle M_{ij}, Q \rangle - \tilde{d_{ij}}^2 \right | \leq \Delta,$ \quad \quad $(i,j) \in E$\\
       \qquad \quad & $Q \succeq \bf{0}$. \\
      \end{tabular}\\
      
      2:\quad Compute the best rank-$d$ approximation $U_d\Sigma_dU_d^T$ of  $Q$\\
      3:\quad Return $\hX =U_d\Sigma_d^{1/2} $.\\
      \hline
      \end{tabular}
\end{center}  
Here $M_{ij} = e_{ij}e_{ij}^T \in \mathbb{R}^{n \times n}$, where $e_{ij} \in \mathbb{R}^n$ is the vector with $+1$ at $i^{th}$ position, $-1$ at $j^{th}$ position and zero everywhere else. Also, $\langle A, B\rangle \equiv \Tr(A^TB)$. 
Note that with a slight abuse of notation, the solution of the SDP problem in the first step is denoted by $Q$. 

Let $Q_0 := XX^T$ be the Gram matrix of the node positions, namely $Q_{0,ij} = x_i\cdot x_j$. A key observation is that $Q_0$ is a low rank matrix: $\rank(Q_0) \le d$,
and obeys the constraints of the SDP problem. By minimizing $\Tr(Q)$ in the first step, we promote low-rank solutions $Q$ (since  $\Tr(Q)$ is the sum of the eigenvalues of $Q$). Alternatively, this minimization can be interpreted as setting the center of 
gravity of $\{x_1,\dots,x_n\}$ to coincide with the origin, thus removing the 
degeneracy due to translational invariance.

In step 2, the algorithm computes the eigen-decomposition of $Q$ and retains the $d$ largest eigenvalues. This is equivalent to computing the best rank-$d$ approximation of $Q$ in Frobenius norm. The center of gravity of the reconstructed points remains at the origin after this operation.

Our main result provides a characterization of  the robustness 
properties of the SDP-based algorithm. Here and below `with high probability (w.h.p.)'
means with probability converging to $1$ as $n\to\infty$ 
for $d$ fixed. 
\begin{thm}
\label{thm:main_result}
Let $\{x_1,\dots,x_n\}$ be $n$ nodes distributed uniformly at random in the hypercube $[-0.5,0.5]^d$. Further, assume connectivity radius $r \geq \alpha (\log n / n)^{1 /d}$, with $\alpha \geq 10 \sqrt{d}$. Then w.h.p., the error distance between the estimate $\hX$ returned by the SDP-based algorithm and the correct coordinate matrix $X$ is upper bounded as

\begin{equation}
\label{eq:main_result1}
d(X,\hX) \leq C_1 (n r^d)^5 \frac{\Delta}{r^4}.
\end{equation}
Conversely, w.h.p., there exist adversarial
measurement errors $\{z_{ij}\}_{(i,j)\in E}$ such that 
\begin{equation}
\label{eq:main_result2}
d(X,\hX) \ge C_2 \min\{\frac{\Delta}{r^4},1\}\, ,
\end{equation}
Here, $C_1$ and $C_2$ denote universal constants that depend only on $d$.
\end{thm}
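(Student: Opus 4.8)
The plan is to treat the two bounds separately; in both, the geometry enters through two effects, each costing a factor $r^{-2}$: a \emph{local} rigidity estimate on clique-sized patches, and a \emph{global} assembly step governed by the spectral gap of $G(n,r)$, which is of order $r^2$ below the degree.

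\textbf{Upper bound.} First, $Q_0:=XX^T$ is feasible: $\langle M_{ij},Q_0\rangle=\|x_i-x_j\|^2=d_{ij}^2$, so $|\langle M_{ij},Q_0\rangle-\td_{ij}^2|=|z_{ij}|\le\Delta$, and $Q_0\succeq 0$. Replacing $Q_0$ by its centered version and using $LM_{ij}L=M_{ij}$, I may assume $Qu=0$, $Q_0u=0$, so that $R:=Q-Q_0$ satisfies $Ru=0$, $|\langle M_{ij},R\rangle|\le 2\Delta$ for $(i,j)\in E$, $\Tr(R)\le 0$, and $Q_0+R\succeq 0$. Then $d(X,\hX)=\frac1{n^2}\|Q_0-\hX\hX^T\|_1\le\frac1{n^2}(\|R\|_1+\|Q-\hX\hX^T\|_1)$. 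The truncation term is lower order: since $\rank(Q_0)\le d$, Weyl's inequality gives $\lambda_i(Q)\le\|R\|_{\rm op}$ for $i>d$, and combining with $\Tr(Q)\le\Tr(Q_0)$ yields $\sum_{i>d}\lambda_i(Q)\le d\,\|R\|_{\rm op}$, so $\frac1{n^2}\|Q-\hX\hX^T\|_1\lesssim\frac{\sqrt d}{n}\|R\|_{\rm op}$. Thus everything reduces to bounding $\|R\|_1$ by $O_d\big(n^2(nr^d)^5\Delta/r^4\big)$ and $\|R\|_{\rm op}$ by $O_d\big((nr^d)^5\Delta/r^4\big)$, using only the three structural properties of $R$.

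\textbf{Upper bound, main argument.} Partition $[-\tfrac12,\tfrac12]^d$ into $\Theta(r^{-d})$ sub-cubes of side $\asymp r/\sqrt d$, so that the vertices of any sub-cube, and of any pair of adjacent sub-cubes, form a clique; w.h.p.\ every such patch contains $\Theta(nr^d)$ points in generic position. \emph{Locally}, the constraints $|\langle M_{ij},R\rangle|\le 2\Delta$ together with $Q_0+R\succeq 0$ force the restriction of $R$ to a patch to be small once one quotients by the patch's $au^T+ua^T$ degeneracy; because a clique's squared distances are of order $r^2$, inverting the local map $R\mapsto(\langle M_{ij},R\rangle)$ costs one factor $r^{-2}$, and the generic-position condition number of a clique of $\asymp nr^d$ points contributes the $\mathrm{poly}(nr^d)$ loss. \emph{Globally}, adjacent patches overlap in $\gtrsim d{+}1$ generic points, which pins down the relative rigid motions; bounding $\|R\|_1$ then amounts to inverting a graph-Laplacian-type operator on the patch adjacency graph (a coarsening of $G(n,r)$), which — together with the $\Tr(R)\le0$ and $Q_0+R\succeq0$ constraints that fix the single global ``scale'' mode — costs a further $r^{-2}$ from the $\Theta(r^2)$ spectral gap. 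Multiplying the two $r^{-2}$'s and collecting the polynomial-in-degree losses gives \eqref{eq:main_result1}. I expect the principal obstacles to be: (i) making the local rigidity estimate quantitative with an explicit polynomial dependence on $nr^d$, down to the connectivity threshold; and (ii) performing the assembly so errors accumulate \emph{additively} (controlled by the Laplacian inverse), not multiplicatively along paths — here it is essential that one analyzes a single global SDP rather than iteratively stitching local reconstructions.

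\textbf{Lower bound.} I would exhibit, for a typical realization of $X$, a companion configuration $\hX^{\mathrm{bad}}$ with $|\langle M_{ij},\hX^{\mathrm{bad}}(\hX^{\mathrm{bad}})^T-XX^T\rangle|\le 2\Delta$ on every edge and $d(X,\hX^{\mathrm{bad}})\ge C_2\min\{\Delta/r^4,1\}$. Setting $\td_{ij}^2:=\tfrac12\big(d_{ij}^2+\langle M_{ij},\hX^{\mathrm{bad}}(\hX^{\mathrm{bad}})^T\rangle\big)$ makes both $X$ and $\hX^{\mathrm{bad}}$ feasible for the same instance; since $d$ is a pseudometric on $\{LYY^TL\}$, any SDP output $\hX$ obeys $d(X,\hX)+d(\hX^{\mathrm{bad}},\hX)\ge d(X,\hX^{\mathrm{bad}})$, so the error exceeds $\tfrac12 d(X,\hX^{\mathrm{bad}})$ in at least one of the two worlds, and arranging $\Tr\big(\hX^{\mathrm{bad}}(\hX^{\mathrm{bad}})^T\big)\le\Tr(XX^T)$ (or analyzing the minimizer through complementary slackness of the trace objective) forces the bad world to be the one whose truth is $X$. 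The construction of $\hX^{\mathrm{bad}}$ is dual to the upper-bound mechanism: it is a near-rigid deformation whose patchwise rigid motions vary along the slowest nontrivial mode of the patch-Laplacian and which is adapted to the clique-scale rigidity so that each squared edge length moves by only $O(\Delta)$, while the global displacement — amplified once by the $r^{-2}$ inversion of the local rigidity map and once by the $r^{-2}$ spectral gap, and further pushed by the trace-minimizing objective — reaches order $\Delta/r^4$; the $\min\{\cdot,1\}$ reflects the trivial bound $d(X,\hX)\lesssim$ (squared diameter of the cloud). The hard part is to verify that such a deformation genuinely exists for the irregular random edge set (not merely as a smooth field, where the naive budget only yields $\Delta/r^2$), which requires exploiting the fluctuations of $G(n,r)$ near threshold, and to confirm via the trace ordering / KKT conditions that the SDP is actually driven away from $X$ rather than merely admitting $\hX^{\mathrm{bad}}$ among several optima.
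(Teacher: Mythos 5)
Your proposal is a program, not a proof: in both directions the steps you flag as ``principal obstacles'' are precisely the technical core, and they are left unproved. For the upper bound, your reduction to bounding $\|R\|_1$ and $\|R\|_{\rm op}$ from the three structural properties of $R$ is fine (the Weyl/trace treatment of the rank-$d$ truncation matches the paper's), but the patch-plus-assembly sketch does not yet supply those bounds. The paper's route splits $R=\tilde R+R^{\perp}$ with respect to $V=\langle u,x^{(1)},\dots,x^{(d)}\rangle$ and uses two quite different mechanisms: $R^{\perp}\succeq 0$ is controlled by a duality-type inequality $\langle\Omega,R^{\perp}\rangle\ge\sigma_{\min}(\Omega)\|R^{\perp}\|_*$ against an explicitly constructed PSD stress matrix of rank $n-d-1$ (built from projections on overlapping cliques), whose condition number is bounded via $\Omega^{\perp}\succeq C(nr^d)^{-3}r^2\mathcal L^{\perp}$ and the Laplacian gap; the component $\tilde R=XY^T+YX^T$ is controlled by comparing the rigidity operators of $K_n$ and $G$ through a chain/force-flow argument, costing $r^{-d-2}$. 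Your single ``local rigidity $\times$ Laplacian assembly'' picture does not distinguish these two components, and in particular gives no mechanism for the $\tilde R$ part (deformations lying in the span of $X$ and $u$), which is exactly where the paper needs the force-flow construction with explicit spring constants $\lambda_{lk}=O(r^{-1})$. Until items (i) and (ii) of your own list are proved with explicit $(nr^d)$- and $r$-dependence, the upper bound is not established.

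The lower bound has a more structural gap. You seek an in-dimension companion $\hX^{\mathrm{bad}}$ plus a two-world indistinguishability argument, and you yourself note that (a) the existence of such a deformation with error $\asymp\Delta/r^4$ is unverified (your remark that smooth fields only give $\Delta/r^2$ signals the missing mechanism), and (b) it must still be argued that the SDP's output is driven away from $X$ rather than merely that one of two worlds errs. The paper sidesteps both difficulties with a different and essentially explicit device: bend the cube into $\mathbb{R}^{d+1}$ via $\mathcal T$ with curvature radius $R=\max\{1,r^2\Delta^{-1/2}\}$. A smooth bending changes squared edge lengths only at fourth order, $|d_{ij}^2-\tilde d_{ij}^2|\le d_{ij}^4/(2R^2)\le\Delta$, which is exactly the quartic distortion that produces $\Delta/r^4$; and because the SDP is oblivious to the target dimension and minimizes the trace, it returns the Gram matrix of the bent configuration, whose rank-$d$ projection is shown (via the eigenvector perturbation estimates of Remarks 7.1--7.2) to be at distance $\gtrsim 1/R^2\ge C\min\{1,\Delta/r^4\}$ from $X$. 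Without either this out-of-dimension construction or a completed version of your existence-plus-KKT argument, the stated lower bound is not proved.
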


The proof of this theorem relies on several technical  results
of independent interest. First, we will prove a general
deterministic error estimate in terms of the condition number of 
the stress matrix of the graph $G$, see Theorem
\ref{thm:Rtilde_bound}.
Next we will use probabilistic arguments to control the stress matrix of random geometric
graphs, see Theorem \ref{thm:OmegaL}. Finally, we will prove several
estimates on the rigidity matrix of $G$, cf. in particular Theorem
\ref{thm:cheeger}. The necessary background in rigidity theory is
summarized in Section \ref{sec:rigidity}.
%
%
\subsection{Related work}

The localization problem and its variants have attracted significant interest over the past years due to their applications in numerous areas, such as sensor network localization~\cite{Biswas:SDP}, NMR spectroscopy~\cite{Lu:protein}, and manifold learning~\cite{Saul:LLE,Tenenbaum:ISOMAP}, to name a few.

Of particular interest to our work are the algorithms proposed for the
localization problem~\cite{Oh:Localization, Singer:positioning,
  Biswas:SDP,Weinberger06anintroduction}. In general, few performance
guarantees have been proved for these algorithms, in particular  in the presence of noise. 

The existing algorithms can be categorized in to two groups. The first group consists of algorithms who try first to estimate the missing distances and then use MDS to find the positions from the reconstructed distance matrix~\cite{Oh:Localization,Cox:MDS}. MDS-MAP~\cite{Cox:MDS} and ISOMAP~\cite{Tenenbaum:ISOMAP} are two well-known examples of this class where the missing entries of the distance matrix are approximated by computing the shortest  paths between all pairs of nodes. The algorithms in the second group formulate the localization problem as a non-convex optimization problem and then use different relaxation schemes to solve it. An example of this type is relaxation to an SDP~\cite{Biswas:SDP, So05theoryof, zhu:universal,Alfakih:SDP,Weinberger06anintroduction}. A crucial assumption in these works is the existence of some anchors among the nodes whose exact positions are known. The SDP is then used to efficiently check whether the graph is uniquely $d$-localizable and to find its unique realization.

Maximum Variance Unfolding (MVU) is an SDP-based algorithm with a very
similar flavor as ours~\cite{Weinberger06anintroduction}. MVU is an
approach to solving dimensionality reduction problems using local
metric information and is based on the following simple
interpretation. Assume $n$ points lying on a low dimensional manifold
in a high dimensional ambient space. In order to find a low
dimensional representation of this data set, the algorithm attempts to
somehow unfold the underlying manifold. To this end, MVU pulls the
points apart in the ambient space, maximizing the total sum of their
pairwise distances, while respecting the local information. However,
to the best of our knowledge, no performance guarantee has been proved
for the MVU algorithm.

Given the large number of applications, and computational methods
developed in this broad area, the present paper is in many respects a
first step. While we focus on a specific model, and a relatively
simple algorithm, we expect that the techniques developed here 
will be applicable to a broader setting, and to a number of algorithms
in the same class.

\subsection{Organization of the paper}
The remainder of this paper is organized as follows. Section~\ref{sec:preliminaries} is a brief review of some notions in rigidity theory and some properties of $G(n,r)$ which will be useful in this paper. In Section~\ref{sec:discussion}, we discuss the implications of Theorem~\ref{thm:main_result} in different applications. The proof of Theorem~\ref{thm:main_result} (upper bound) is given in Section~\ref{sec:main_thm_proof}. Sections~\ref{sec:bound on Rperp} and~\ref{sec:bound on Rtilde} contain the proof of two important lemmas used in proving Theorem~\ref{thm:main_result}. Several technical steps are discussed in Appendices. Finally, We prove Theorem~\ref{thm:main_result} (lower bound) in Section~\ref{sec:converse}.

For the reader's convenience, an overview of the symbols used throughout this paper is given in Table~\ref{tab:notation} in Appendix~\ref{App:tab_notation}.

\section{Preliminaries}
\label{sec:preliminaries}
\subsection{Rigidity Theory}
\label{sec:rigidity}
Rigidity theory studies whether a given partial set of pairwise distances $d_{ij} = \|x_i -x_j\|$ between a finite set of nodes in $\reals^d$ uniquely determine the coordinates of the points up to rigid transformations. This section is a very brief overview of definitions and results in rigidity theory which will be useful in this paper. We refer the interested reader to~\cite{gortler-2010-4,Asimow:rigidity}, for a thorough discussion.

A \emph{framework} $G_X$ in $\reals^d$ is an undirected graph $G =
(V,E)$ along with a \emph{configuration} $X\in\reals^{n\times d}$
which assigns a point $x_i \in \reals^d$ to each vertex $i$ of the
graph. The edges of $G$ correspond to the distance constraints. In the
following, we discuss two important notions, namely Rigidity matrix
and Stress matrix. As 
mentioned above, a crucial part of the proof of
Theorem~\ref{thm:main_result} 
consists in  establishing some properties of the stress matrix and of
the rigidity matrix of the random geometric graph $G(n,r)$.

\smallskip
\noindent \textbf{Rigidity matrix.}
Consider a motion of the framework with $x_i(t)$ being the position vector of point $i$ at time $t$. Any smooth motion that instantaneously preserves the distance $d_{ij}$ must satisfy $\frac{d}{dt} \Arrowvert x_i - x_j \Arrowvert ^2 = 0$ for all edges $(i,j)$. Equivalently,
\begin{equation}
\label{eq:motion_eq}
(x_i - x_j)^T (\dot{x_i} - \dot{x_j}) = 0 \quad \forall (i,j) \in E,
\end{equation} 
where $\dot{x_i}$ is the velocity of the $i^{th}$ point. Given a framework $G_X \in \mathbb{R}^d$, a solution $\dot{X} = \lbrack \dot{x}_1^T \;\dot{x}_2^T \;\cdots \;\dot{x}_n^T \rbrack^T$, with $\dot{x}_i \in \mathbb{R}^d$, for the linear system of equations~\eqref{eq:motion_eq} is called an \emph{infinitesimal motion} of the framework $G_X$. This linear system of equations consists of $|E|$ equations in $dn$ unknowns and can be written in the matrix form $R_G(X) \dot{X} = 0$, where $R_G(X)$ is called the $ |E| \times dn$ \emph{rigidity matrix} of $G_X$.

It is easy to see that for every anti-symmetric matrix $A \in \mathbb{R}^{d \times d}$ and for every vector $b \in \mathbb{R}^d$, $\dot{x}_i = Ax_i + b$ is an infinitesimal motion. Notice that these motions are the derivative of rigid transformations. ($A$ corresponds to orthogonal transformations and $b$ corresponds to translations). Further, these motions span a $d(d+1)/2$ dimensional subspace of $\reals^{dn}$, accounting $d(d-1)/2$ degrees of freedom for orthogonal transformations (corresponding to the choice of $A$), and $d$ degrees of freedom for translations (corresponding to the choice of $b$). Hence, dim $\textrm{Ker}(R_G(X)) \geq d(d+1)/2$. A framework is said to be \emph{infinitesimally rigid} if dim $\textrm{Ker}(R_G(X)) = d(d+1)/2$. 

\smallskip
\noindent \textbf{Stress matrix.}
A \emph{stress} for  a framework $G_X$ is an assignment of scalars $\omega_{ij}$ to the edges such that for each  $i \in V$,
 \begin{equation*}
 \sum_{j: (i,j) \in E}\omega_{ij} (x_i - x_j) = (\sum_{j: (i,j) \in E} \omega_{ij})x_i - \sum_{j: (i,j) \in E} \omega_{ij} x_j = 0.
 \end{equation*}
 A stress vector can be rearranged into an $n \times n$ symmetric matrix $\Omega$ , known as the \emph{stress matrix}, such that for $i \neq j$, the $(i,j)$ entry of $\Omega$ is $\Omega_{ij} = -\omega_{ij}$, and the diagonal entries for $(i,i)$ are $\Omega_{ii} = \sum_{j: j\neq i} \omega_{ij}$. Since all the coordinate vectors of the configuration as well as the all-ones vector are in the null space of $\Omega$, the rank of the stress matrix for generic configurations is at most $n - d - 1$.  
 

There is an important relation between stress matrices of a framework and the notion of \emph{global rigidity}. A framework $G_X$ is said to be \emph{globally rigid} in $\mathbb{R}^d$ if all frameworks in $\mathbb{R}^d$ with the same set of edge lengths are congruent to $G_X$, i.e. are a rigid transformation of $G_X$. Further, a framework  $G_X$ is \emph{generically globally rigid} in $\mathbb{R}^d$ if  $G_X$ is globally rigid at all generic configurations $X$. (Recall that a configuration of points is called \emph{generic} if the coordinates of the points do not satisfy any nonzero polynomial equation with integer coefficients).

The connection between global rigidity and stress matrices is
demonstrated in the following 
two results proved in \cite{connelly:generic} and
\cite{gortler-2010-4}.

 \begin{thm}[Connelly, 2005]
 \label{thm:stress1}
 If $X$ is a generic configuration in $\mathbb{R}^d$ with a stress matrix $\Omega$ of rank $n - d -1$, then $G_X$ is globally rigid in $\mathbb{R}^d$.
 \end{thm}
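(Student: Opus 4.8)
By definition, global rigidity of $G_X$ asserts that every framework $G_Y$ with $Y\in\reals^{n\times d}$ having the same edge lengths as $G_X$ is a rigid transform of $X$, and this is what the plan must prove. The argument rests on two consequences of the hypotheses. First, since $X$ is generic it affinely spans $\reals^{d}$, so the all-ones vector $u$ and the $d$ coordinate columns $X^{(1)},\dots,X^{(d)}$ of $X$ are linearly independent and, by the defining equations of a stress matrix, all lie in $\ker\Omega$; because $\rank\Omega=n-d-1$ the kernel has dimension exactly $d+1$, so
\begin{equation*}
\ker\Omega=\mathrm{span}\{u,X^{(1)},\dots,X^{(d)}\}\, .
\end{equation*}
Hence a matrix $P$ has all of its columns in $\ker\Omega$ if and only if $P=XA+us^{T}$ for some $A$ and $s$, i.e. if and only if $P$ is an affine image of $X$. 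Second, I will use the standard fact that a generic framework carrying a stress matrix of rank $n-d-1$ is infinitesimally rigid; in particular $|E|\ge dn-d(d+1)/2$, and $\ker R_G(X)$ reduces to the trivial (rigid-motion) flexes.

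The driving identity is the one valid for an arbitrary configuration $P$ in any dimension, depending only on the edge lengths:
\begin{equation*}
\sum_{(i,j)\in E}\omega_{ij}\,\|p_i-p_j\|^{2}=\Tr\!\big(P^{T}\Omega P\big)\, .
\end{equation*}
Since $\Omega X=0$, both sides vanish at $P=X$, so the left-hand side equals $\sum_{(i,j)\in E}\omega_{ij}d_{ij}^{2}=0$; because $G_Y$ has the same edge lengths, evaluating at $P=Y$ gives $0$ as well, hence $\Tr(Y^{T}\Omega Y)=0$. If $\Omega$ were positive semidefinite this would already force $\Omega Y=0$, but $\Omega$ is in general indefinite, and upgrading the scalar identity $\Tr(Y^{T}\Omega Y)=0$ to the pointwise identity $\Omega Y=0$ is the technical heart of the theorem and the step I expect to be the main obstacle. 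It is carried out by complexifying: one studies the measurement variety $V=\{W\in\mathbb{C}^{n\times d}:\ \|w_i-w_j\|^{2}=d_{ij}^{2}\ \text{for all }(i,j)\in E\}$, uses infinitesimal rigidity of $G_X$ to see that the irreducible component $V_0$ of $V$ through $X$ is a single orbit of the complex Euclidean group (of dimension $d(d+1)/2$, matching $\dim\ker R_G(X)$), and then exploits the rank-$(n-d-1)$ stress together with the genericity of $X$ to rule out any other component of $V$ containing a realization of $G$. Consequently $Y\in V_0$, i.e. $Y=XA+us^{T}$ for suitable $A$ and $s$ (automatically real, since $X,u,Y$ are real and $[X\,|\,u]$ has full column rank), and $\Omega Y=0$.

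It remains to show that the affine map must be an isometry. Translating, take $s=0$, so $Y=XA$. Equating squared edge lengths along every edge gives $(x_i-x_j)^{T}\big(AA^{T}-I\big)(x_i-x_j)=0$ for all $(i,j)\in E$. A nonzero symmetric matrix $S=AA^{T}-I$ solving this system is exactly a nontrivial affine infinitesimal flex of $G_X$; for a generic configuration such a flex can exist only if the edge direction vectors $x_i-x_j$ all lie on a common quadric at infinity, and since $G_X$ is infinitesimally rigid it has enough edges whose directions, by genericity of $X$, fail this condition. Therefore $AA^{T}=I$, i.e. $A\in{\sf O}(d)$, so $Y=XA$ is a rigid transform of $X$; this proves that $G_X$ is globally rigid in $\reals^{d}$.
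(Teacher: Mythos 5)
First, note that the paper does not prove this statement at all: Theorem~\ref{thm:stress1} is quoted verbatim from Connelly's 2005 paper \cite{connelly:generic} and is used as a black box, so there is no ``paper proof'' to compare against; your attempt has to stand on its own as a reconstruction of Connelly's argument, and as such it has a genuine gap. Your skeleton is the right one (identify $\ker\Omega=\langle u,x^{(1)},\dots,x^{(d)}\rangle$; show that any equivalent framework $Y$ satisfies $\Omega Y=0$, hence is an affine image of $X$; then show the affine map is a congruence), but the middle step --- which you yourself flag as ``the technical heart'' --- is not actually argued. The complexification sketch you give is circular as stated: you define $V$ as the fiber of configurations with the prescribed edge lengths and then say one ``rules out any other component of $V$ containing a realization of $G$,'' but every point of $V$ is such a realization, so ruling out other components \emph{is} the assertion of global rigidity, and no mechanism is offered for how the rank-$(n-d-1)$ stress accomplishes it. The sketch is also structurally inconsistent: if $Y$ lay in the orbit component $V_0$ you would already have $Y$ congruent to $X$ and your final affine-to-orthogonal paragraph would be vacuous, whereas what the argument actually needs is only the weaker conclusion $\Omega Y=0$, which your component analysis does not deliver. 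Connelly's route to $\Omega Y=0$ is different and is the real content of the theorem: he works with the image of the squared-edge-length map (the measurement variety), observes that at the generic value $f(X)$ the tangent space of that image equals ${\rm im}\, df(X)$, that $f(Y)=f(X)$ forces ${\rm im}\, df(Y)$ into that same tangent space, and hence that any stress of $X$ (a covector annihilating ${\rm im}\, df(X)$) is also a stress of $Y$.

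Two further ingredients you invoke are themselves nontrivial and would need independent proofs or citations to avoid circularity. The ``standard fact'' that a generic framework with a stress matrix of rank $n-d-1$ is infinitesimally rigid is usually obtained as a \emph{consequence} of generic global rigidity (via Hendrickson's necessary conditions or the Gortler--Healy--Thurston characterization), so you cannot freely assume it while proving Connelly's theorem unless you supply a direct argument. And in the last paragraph, genericity of $X$ alone does not make the edge directions $x_i-x_j$ generic: they are constrained differences, and the statement that they do not lie on a common quadric at infinity (so that $AA^T=I$) is a separate lemma in Connelly's paper, proved using properties of the graph guaranteed by the maximal-rank stress, not by a generic-position count of edges. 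As it stands, your write-up is a correct outline of the strategy with the decisive steps asserted rather than proved.
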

 
 \begin{thm}[Gortler, Healy, Thurston, 2010]
 \label{thm:stress2}
 Suppose that $X$ is a generic configuration in $\mathbb{R}^d$, such that $G_X$ is globally rigid in $\mathbb{R}^d$. Then either $G_X$ is a simplex or it has a stress matrix $\Omega$ with rank $n- d -1$. 
 \end{thm}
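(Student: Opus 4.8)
The plan is to prove the contrapositive, following the strategy of Gortler, Healy and Thurston. Write $K$ for the \emph{stress kernel} of $G_X$, i.e. the intersection of the kernels of all equilibrium stress matrices. Since each coordinate vector of $X$, together with the all-ones vector $u$, lies in $K$ and these are linearly independent (as $X$ affinely spans $\reals^d$ for generic $X$ with $n\ge d+1$), one always has $\dim K\ge d+1$, with $\dim K=d+1$ equivalent to the existence of a stress matrix of rank $n-d-1$. So it suffices to show that if $k:=\dim K\ge d+2$ and $G$ is not a simplex, then $G_X$ is \emph{not} globally rigid in $\reals^d$ --- that is, there is a configuration $Y\in\reals^{n\times d}$ with the same edge lengths that is not a rigid transform of $X$.

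The main tool is a higher-dimensional \emph{canonical realization}. I would choose a centered $\hX\in\reals^{n\times(k-1)}$ whose columns, together with $u$, span $K$; then $\hX$ affinely spans $\reals^{k-1}$, and after centering $X=\hX L$ for some rank-$d$ matrix $L\in\reals^{(k-1)\times d}$. A short computation shows that $G_{\hX}$ and $G_X$ have exactly the same space of equilibrium stresses: a stress of $G_X$ annihilates $K\supseteq\mathrm{colspan}(\hX)$, while a stress of $G_{\hX}$ annihilates $\mathrm{colspan}(\hX)\supseteq\mathrm{colspan}(X)$. Hence the stress kernel of $G_{\hX}$ is again $K$, which now has the minimal possible dimension $k=(k-1)+1$ for a configuration in $\reals^{k-1}$; equivalently $G_{\hX}$ carries a stress matrix of maximal rank $n-k$. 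Granting that $\hX$ is itself generic in $\reals^{k-1}$ --- which needs a separate and delicate argument, since $\hX$ is built from the algebraically determined stress kernel of $X$ --- Theorem~\ref{thm:stress1} shows that $G_{\hX}$ is globally rigid in $\reals^{k-1}$.

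Next I would reformulate global rigidity as a statement about a determinantal variety, first passing to $\mathbb{C}$: one must establish that generic global rigidity over $\reals$ coincides with generic global rigidity over $\mathbb{C}$, since the dimension counts live most naturally over $\mathbb{C}$. Over $\mathbb{C}$, and after centering, a framework is determined up to congruence by its Gram matrix $G=YY^T$ --- a symmetric matrix with $Gu=0$ and $\mathrm{rank}\,G\le d$ whose edge data is the linear image $(\langle M_{ij},G\rangle)_{(i,j)\in E}$. Thus $G_X$ is globally rigid exactly when $XX^T$ is the unique point of the determinantal variety $\mathcal V_d=\{G:Gu=0,\ \mathrm{rank}\,G\le d\}$ in the affine subspace $\mathcal A=\{G:Gu=0,\ \langle M_{ij},G\rangle=\|x_i-x_j\|^2\ \forall(i,j)\in E\}$. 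Using the factorization $XX^T=\hX(LL^T)\hX^T$, the global rigidity of $G_{\hX}$ in $\reals^{k-1}$, and the singularity and degree structure of the rank-$\le d$ locus $\mathcal V_d$ over $\mathbb{C}$, one shows that the slack $k-1>d$ is incompatible with $\mathcal V_d\cap\mathcal A$ reducing to the single point $XX^T$: either that intersection acquires positive dimension, or it contains a second point, and in either case $G_X$ has a realization in $\reals^d$ not congruent to $X$. The simplex alternative (the case $n\le d+1$, $G=K_n$) is precisely the boundary situation in which no such slack exists.

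I expect the main obstacle to be exactly this last transfer --- turning ``extra room in dimension $k-1$'' into an honest second configuration in dimension $d$. It is a genuinely global phenomenon: locally $G_X$ may be perfectly (infinitesimally) rigid, so the obstruction is invisible to the rigidity matrix and must be extracted from the geometry of the determinantal variety $\mathcal V_d$ and its secant structure over $\mathbb{C}$. The two supporting genericity claims --- that generic global rigidity is a field-independent property of $G$, and that the canonical realization $\hX$ inherits genericity --- are the other substantive hurdles; with those in hand, what remains is the linear algebra of stress matrices together with Connelly's criterion, Theorem~\ref{thm:stress1}.
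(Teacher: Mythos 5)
You should first note that the paper does not prove this statement at all: Theorem~\ref{thm:stress2} is quoted as background from the cited work of Gortler, Healy and Thurston, and it is not even used in the paper's arguments (the paper's own global-rigidity corollary comes from the $\Delta=0$ case of Theorem~\ref{thm:main_result}, via the explicit stress matrix of Proposition~\ref{pro:stress_construction}). So there is no proof in the paper to compare against, and your proposal has to be judged on its own as an attempted proof of the Gortler--Healy--Thurston theorem.

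Judged that way, it is an outline of the known strategy rather than a proof, and the gaps are exactly where the theorem lives. The reductions you do carry out are fine: passing to the contrapositive, the identity of the stress spaces of $G_X$ and of the canonical realization $G_{\hX}$ built from the shared stress kernel $K$ (both containments are correct, since stresses of $G_X$ annihilate $K\supseteq{\rm colspan}(\hX,u)$ and stresses of $G_{\hX}$ annihilate ${\rm colspan}(\hX,u)\supseteq{\rm colspan}(X,u)$), and the observation that $G_{\hX}$ then has maximal stress rank in $\reals^{k-1}$. But the three steps you yourself flag as ``hurdles'' are not side issues; they are the entire content of the result. In particular, the final paragraph --- that the slack $k-1>d$ forces the fiber $\mathcal V_d\cap\mathcal A$ to contain a point other than $XX^T$, i.e.\ a genuinely non-congruent realization in $\reals^d$ --- is asserted (``one shows that\dots''), not argued, and no mechanism is given for extracting a second \emph{real} rank-$\le d$ Gram matrix from the complex determinantal geometry; this is precisely the global phenomenon the infinitesimal theory cannot see, and it is where Gortler--Healy--Thurston expend most of their effort. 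The same goes for the genericity of $\hX$ (its entries satisfy algebraic relations coming from the stress kernel, so invoking Theorem~\ref{thm:stress1} for $G_{\hX}$ is not legitimate without a real argument) and for the claimed equivalence of real and complex generic global rigidity. Even the opening equivalence ``$\dim K=d+1$ iff some single stress matrix has rank $n-d-1$'' needs a (short, standard) argument that a generic stress attains the maximal rank with kernel exactly $K$. As it stands, the proposal defers all of the substantive mathematics to the very reference the paper cites, so it cannot be counted as a correct proof.
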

 
 Among other results in this paper, we construct a special stress matrix $\Omega$ for the random geometric graph $G(n,r)$. We also provide upper bound and lower bound on the maximum and the minimum nonzero singular values of this stress matrix. These bounds   are used in proving Theorem~\ref{thm:main_result}.

\subsection{Some Properties of $G(n,r)$}
In this section, we study some of the basic properties of $G(n,r)$ which will be used several times throughout the paper.

Our first remark provides probabilistic bounds on the number of nodes contained in a region $\mathcal{R} \subseteq [-0.5,0.5]^d$.

\begin{remark}{\bf[Sampling Lemma]}
\label{rem:region}
Let $\mathcal{R}$ be a measurable subset  of the hypercube $[-0.5,0.5]^{d}$, and let $V(\mathcal{R})$ denote its volume. Assume $n$ nodes are deployed uniformly at random in $[-0.5,0.5]^d$, and let $n(\mathcal{R})$ be the number of nodes in region $\mathcal{R}$. Then,
\begin{equation}
\label{eqn:region}
n(\mathcal{R}) \in n V(\mathcal{R}) + [-\sqrt{2cn V(\mathcal{R}) \log n},
\sqrt{2cn V(\mathcal{R}) \log n}],
\end{equation}
with probability at least $1-2/n^c$.
\end{remark}  
The proof is immediate and deferred to Appendix~\ref{App:region}.

In the graph $G(n,r)$, every node is connected to all the nodes within its $r$-neighborhood. Using Remark~\ref{rem:region} for $r$-neighborhood of each node, and the fact $r \ge 10\sqrt{d} (\log n / n)^{1/d}$, we obtain the following corollary after applying union bound over all the $r$-neighborhoods of the nodes. 
 
 \begin{cor}
 \label{cor:deg_GNR}
In the graph $G(n,r)$, with $r \ge 10 \sqrt{d} (\log n /n)^{1/d}$, the degrees of all nodes are in the interval $[(1/2)K_dnr^d,(3/2)K_dnr^d]$, with high probability. Here, $K_d$ is the volume of the $d$-dimensional unit ball.
 \end{cor}
 
 Next, we discuss some properties of the spectrum of $G(n,r)$. 
 
Recall that the Laplacian $\mathcal{L}$ of
the graph $G$ is the symmetric matrix indexed by the vertices $V$, such that
$\mathcal{L}_{ij} = -1$ if $(i,j)\in E$, $\mathcal{L}_{ii}$ =
degree$(i)$ and $\mathcal{L}_{ij}=0$ otherwise. The all-ones vector $u \in
\reals^n$ is an eigenvector of $\L(G)$ with eigenvalue $0$.  Further,
the multiplicity of eigenvalue $0$ in spectrum of $\L(G)$ is equal to the number of connected components in graph $G$.     
Let us stress that our definition of $\L(G)$ has opposite sign with
respect to the one adopted by part of the computer science
literature. In particular, with the present definition, $\L(G)$ is a
positive semidefinite matrix.  

It is useful to recall a basic estimate on the Laplacian of random geometric graphs.
\begin{remark}
 \label{rem:spec_GNR}
 Let $\mathcal{L}_{n}$ denote the normalized Laplacian of the random geometric graph $G(n,r)$, defined as $\mathcal{L}_{n} = D^{-1/2} \mathcal{L} D^{-1/2}$, where $D$ is the diagonal matrix with degrees of the nodes on diagonal. Then, w.h.p.,
$\lambda_2(\mathcal{L}_{n})$, the second smallest eigenvalue of $\mathcal{L}_{n}$, is at least $C r^2$~(\cite{Boyd:mixingtimes,Penrose:RGG}). Also, using the result of~\cite{butler2008eas} (Theorem 4) and Corollary~\ref{cor:deg_GNR}, we have $\lambda_2(\mathcal{L}) \geq C(nr^d) r^2$, for some constant $C =C(d)$.
\end{remark}

\subsection{Notations}
For a vector $v \in \mathbb{R}^n$, and a subset $T \subseteq \{1,\cdots,n\}$, $v_T \in \reals^{T}$ is the restriction of $v$ to indices in $T$. We use the notation $\langle v_1,\cdots,v_n\rangle$ to represent the subspace spanned by vectors $v_i$, $1 \leq i \leq n$. The orthogonal projections onto subspaces $V$ and $V^{\perp}$ are respectively denoted by $P_V$ and $P^{\perp}_V $. The identity matrix, in any dimension, is denoted by $I$. Further, $e_i$ always refers to the $i^{th}$ standard basis element, e.g., $e_1 = (1,0,\cdots,0)$. Throughout this paper, $u \in \mathbb{R}^n$ is the all-ones vector and $C$ is a constant depending 
only on the dimension $d$, whose value may change from case to case. 

Given a matrix $A$, we denote its operator norm by $\|A\|_2$,
its Frobenius norm by $\|A\|_F$, its nuclear norm by $\|A\|_*$, and its $\ell_1$-norm by $\|A\|_{1}$.
($\|A\|_*$ is simply the sum of the singular values of $A$ and $\|A\|_1 = \sum_{ij} |A_{ij}|$). We also use $\sigma_{\max}(A)$ and $\sigma_{\min}(A)$ to respectively denote the maximum and the minimum nonzero singular values of $A$.

For a graph $G$, we denote by $V(G)$ the set of its vertices and we
use $E(G)$ to denote the set of edges in $G$. 
Following the convention adopted above, the Laplacian of $G$ is represented by $\L(G)$.

Finally, we denote by $x^{(i)}\in\reals^n$, $i\in \{1,\dots, d\}$ the $i^{th}$
column of the positions matrix $X$. In other words $x^{(i)}$ is the vector
containing the $i^{th}$ coordinate of points $x_1,\dots,x_n$.

Throughout the proof we shall adopt the convention of using the
notations $X$, $\{x_j\}_{j\in [n]}$, and $\{x^{(i)}\}_{i\in [d]}$ to
denote the centered positions. In other words $X=LX'$ where the 
rows of $X'$ are i.i.d. uniform in $[-0.5,0.5]^d$.
\section{Discussion}
\label{sec:discussion}

In this section, we make some remarks about Theorem~\ref{thm:main_result} and its implications.

\smallskip

\noindent{\bf Tightness of the Bounds.} The upper and the lower bounds in Theorem~\ref{thm:main_result} match up to the factor $C(nr^d)^5$. Note that $nr^d$ is the average degree of the nodes in $G$ (up to a constant) and when the rang $r$ is of the same order as the connectivity threshold, i.e., $r = O((\log n/n)^{1/d})$, it is logarithmic in $n$. Furthermore, we believe that this factor is the artifact of our analysis. The numerical experiments in Section~\ref{sec:simulation} also support the idea that the performance of the SDP-based algorithm, evaluated by $d(X,\hX)$, scales as $C\Delta/r^4$ for some constant $C$. In addition, the theorem states the bounds for $r \ge \alpha (\log n/n)^{1/d}$, with $\alpha \ge 10\sqrt{d}$. However, numerical experiments  in Section~\ref{sec:simulation} show that the bounds hold for much smaller $\alpha$, namely $\alpha \ge 3$ for $d = 2, 4$. Finally, it is immediate to see that under the worst case model for the measurement errors, no algorithm can perform better than $C \Delta/ r^2$. More specifically, for any algorithm $d(X,\hX) \ge C \Delta/r^2$, for some constant $C$. The reason is that letting $\tilde{d_{ij}}^2 = (1+\Delta/r^2) d_{ij}^2$, no algorithm can differentiate between $X$ and its scaled version $\hX = \sqrt{1+\Delta/r^2}\, X$. Also $d(X,\hX) = (\Delta/r^2) (1/n^2) \|LXX^TL\|_1 \ge C \Delta /r^2$, w.h.p. and  for some constant $C$ that depends on the dimension $d$. 

\smallskip

\noindent {\bf Global Rigidity of $G(n,r).$} 
As a  special case of Theorem~\ref{thm:main_result} we can consider
the problem of reconstructing the point positions from exact
measurements.  The case of exact measurements was also studied
recently in \cite{ShamsiYeTaheri} following a  different approach.
This corresponds to  setting $\Delta=0$. The underlying
question is whether the point positions $\{x_i\}_{i\in V}$ can be efficiently
determined (up to a rigid motion) by the set of distances
$\{d_{ij}\}_{(i,j)\in E}$. If this is the case, then, in particular,
the random graph $G(n,r)$ is globally rigid. 

Since the right-hand side of our error bound
Eq.~(\ref{eq:main_result1}) vanishes for $\Delta=0$, we immediately
obtain the following.
\begin{cor}
Let $\{x_1,\dots,x_n\}$ be $n$ nodes distributed uniformly at random
in the hypercube $[-0.5,0.5]^d$. If $r \geq 10 \sqrt{d} (\log n /
n)^{1/d}$, and the distance measurements are exact, then w.h.p., the
SDP-based algorithm recovers the exact positions (up to rigid
transformations). In particular, the random geometric graph $G(n,r)$
is w.h.p. \emph{globally rigid} if $r \geq 10 \sqrt{d} (\log n /
n)^{1/d}$.
\end{cor}
In~\cite{Aspnes2006}, the authors prove a similar result on global rigidity of $G(n,r)$. Namely, they show that if $n$ points are drawn from a Poisson process in $[0,1]^2$, then the random geometric graph $G(n,r)$ is globally rigid w.h.p. when $r$ is of the order $\sqrt{\log n/ n}$. 

As already mentioned above, the graph $G(n,r)$ is disconnected with
high probability if $r \le K_d^{-1/d} ((\log n +C)/n)^{1/d}$ for some
constant $C$. Hence, our result establishes the following
\emph{rigidity phase transition} phenomenon: There exist
dimension-dependent constants
$C_1(d)$, $C_2(d)$ such that a random geometric graph $G(n,r)$
is with high probability not globally rigid if $r\le C_1(d)(\log
n/n)^{1/d}$, and with high probability globally rigid if $r\ge C_2(d)(\log
n/n)^{1/d}$. Applying Stirling formula, it is easy to see that the
above arguments yield $C_1(d) \ge C_{1,*}\sqrt{d}$ and $C_{2}(d)\le
C_{2,*}\sqrt{d}$ for some numerical (dimension independent) constants
$C_{1,*}$, $C_{2,*}$.

It is natural to conjecture that the rigidity phase transition is
sharp.
\begin{conj}
Let $G(n,r_n)$ be a random geometric graph with $n$ nodes, and range
$r_n$, in $d$ dimensions. Then there exists a constant $C_*(d)$ such
that, for any $\ve>0$, the following happens.  If  $r_n \le (C_*(d)-\ve)(\log
n/n)^{1/d}$, then $G(n,r_n)$ is with high probability not globally
rigid.  If  $r_n \ge (C_*(d)+\ve)(\log
n/n)^{1/d}$, then $G(n,r_n)$ is with high probability globally
rigid. 
\end{conj}

\smallskip

\noindent{\bf Sensor Network Localization.}
Research in this area aims at developing algorithms
and systems to determine the positions of the nodes of a sensor network exploiting
inexpensive distributed measurements. Energy and hardware constraints
rule out the use of global positioning systems, and several proposed
systems exploit pairwise distance measurements between the sensors
\cite{SensorNetworks1,SensorNetworks2}. 
These techniques have acquired new industrial interest due to their
relevance to indoor positioning. In this context, global positioning
systems are not a method of choice because of their limited accuracy
in indoor environments. 

Semidefinite programming methods for sensor network localization have been
developed starting with \cite{Biswas:SDP}.
It is common to study and evaluate different
techniques within the random geometric graph model,
but no performance guarantees have been proven for advanced (SDP
based) algorithms, with inaccurate measurements.
We shall therefore consider $n$
sensors placed uniformly at random in the unit hypercube, 
with ambient dimension  either $d=2$ or $d=3$ depending on the
specific application. The connectivity range $r$ is dictated by
various factors: power limitations; interference between nearby nodes;
loss of accuracy with distance. 
 
The measurement error $z_{ij}$ depends on the method used to measure
the distance between nodes $i$ and $j$. We will limit ourselves to
measurement errors due to  noise (as opposed --for instance-- to
malicious behavior of the nodes) and discuss two common techniques for
measuring distances between wireless devices: Received Signal Indicator (RSSI) and Time 
Difference of Arrival (TDoA). RSSI measures the ratio of the power
present in a received radio signal $(P_r)$ and a reference 
transmitted power $(P_s)$. The ratio $P_r / P_s$ is inversely
proportional to the square of the distance between the receiver and the transmitter. Hence, RSSI can be used to
 estimate the distance.
It is reasonable to assume that the dominant error is in the
measurement of the received power, and that it is proportional to
the   transmitted power.
We thus assume that there is an error $\ve\, P_s$ in measuring the
received power $P_r$., i.e., $\widetilde{P}_r = P_r + 
\ve \, P_s$, where $\widetilde{P}_r$ denotes the measured received power. Then,
the measured distance is given by
\begin{equation}
\tilde{d}_{ij}^2 \propto \frac{P_s}{\tilde{P}_r} = \frac{P_s}{P_r}
\cdot\Big(
1+\frac{P_s}{P_r}\, \ve\Big)^{-1}
 \approx \frac{P_s}{P_r}\Big (1 - \frac{P_s}{P_r}\ve\Big) \propto d_{ij}^2 (1  -C d^2_{ij}\ve).
\end{equation} 
Therefore the overall error $|z_{ij}| \propto d_{ij}^4\ve$ and 
its magnitude is $\Delta \propto r^4\ve$. Applying
Theorem~\ref{thm:main_result},
we obtain an average error per node of order
\begin{eqnarray*}
d(X,\hX)\le C_1'(nr^d)^5\, \ve\, .
\end{eqnarray*}
In other words, the positioning accuracy is linear in the measurement
accuracy, with a proportionality constant that is polynomial in the
average node degree. Remarkably, the best accuracy is obtained by using
the smallest average degree, i.e. the smallest measurement radius that
is compatible with connectivity. 

 TDoA technique uses the time difference
between the receipt of two different signals with different
velocities, for instance ultrasound and radio signals. The
time difference is proportional to the distance between the receiver
and the transmitter,  and given the velocity of the signals the
distance can be estimated from the time difference.
 Now, assume that there is a relative  error $\ve$ in measuring this time
 difference (this might be related to inaccuracies in ultrasound speed). 
We thus have $\widetilde{t}_{ij}=t_{ij}(1+\ve)$, where
$\widetilde{t}_{ij}$ is the measured time while $t_{ij}$ is the
`ideal' time difference.  This leads to an error in estimating $d_{ij}$ which
 is proportional to $d_{ij}\ve$. 
Therefore, $|z_{ij}| \propto d^2_{ij}\ve$ and $\Delta \propto r^2\ve$.
 Applying again Theorem~\ref{thm:main_result},
we obtain an average error per node of order
\begin{eqnarray*}
d(X,\hX)\le C_1'(nr^d)^5\, \frac{\ve}{r^2}\, .
\end{eqnarray*}
In other words the reconstruction error decreases with the measurement
radius, which suggests somewhat different network design for such a
system.

Let us stress in passing that the above error bounds are proved under
an adversarial error model (see below). It would be useful to
complement them with similar analysis carried out for other, more
realistic, models.

\smallskip
  
\noindent{\bf Manifold Learning.} Manifold learning deals with finite data sets
of points in ambient space $\reals^N$ which are assumed to lie on a
smooth submanifold $\mathcal{M}^d$ of  dimension $d < N$. The task is
to recover $\mathcal{M}$ given only the data points.
Here, we discuss  the implications of
Theorem~\ref{thm:main_result}  for applications of SDP methods to
manifold learning.

It is typically assumed that the manifold $\mathcal{M}^d$ 
is isometrically equivalent to a region in $\reals^d$. For the sake of
simplicity we
shall assume that this region is convex (see~\cite{DonohoGrimes} for a discussion of this point). With little loss of
generality  we can indeed identify the region with the unit 
hypercube $[-0.5,0.5]^d$. 
A typical manifold learning algorithm (\cite{Tenenbaum:ISOMAP} and~\cite{Weinberger06anintroduction})
estimates the \emph{geodesic} distances between a subset of pairs of 
data points
$d_{\mathcal{M}} (y_i,y_j)$, $y_i\in\reals^N$, and then tries to find a
low-dimensional embedding (i.e. positions $x_i\in\reals^d$) that
reproduce these distances.

The unknown geodesic distance
between nearby data points $y_i$ and $y_j$, denoted by
$d_{\mathcal{M}} (y_i,y_j)$, can be estimated by their Euclidean
distance in $\reals^n$. Therefore the manifold learning problem
reduces mathematically to the localization problem whereby the
distance `measurements' are $\tilde{d}_{ij} = \|y_i -
y_j\|_{\reals^N}$, while the actual distances are $d_{ij} = d_{\mathcal{M}} (y_i,y_j)$.
The accuracy of these estimates depends on the curvature of the
manifold $\mathcal{M}$.
 Let $r_0  = r_0(\mathcal{M})$ be the \emph{minimum radius of curvature} defined by:
\begin{equation*}
\frac{1}{r_0} = \max_{\gamma,t} \{\| \ddot \gamma(t)\|\},
\end{equation*}
where $\gamma$ varies over all unit-speed geodesics in $\mathcal{M}$
and $t$ is in the domain of $\gamma$. 
For instance, an Euclidean sphere of radius $r_0$ has  minimum radius
of curvature equal to $r_0$. 

As shown in~\cite{Bernstain:manifold} (Lemma 3), $(1 - d_{ij}^2 / 24 r_0^2) d_{ij} \le \tilde{d}_{ij} \le d_{ij}$. Therefore, $|z_{ij}| \propto d_{ij}^4 / r_0^2$, and $\Delta \propto r^4/r_0^2$. Theorem~\ref{thm:main_result} supports the claim that the estimation error $d(X, \hX)$ is bounded by $C(nr^d)^5 / r_0^2$. 

\smallskip

\smallskip

As mentioned several times, this paper focuses on a particularly
simple SDP relaxation, and noise model. This opens the way to a number
of interesting directions:
\begin{enumerate}
\item \emph{Stochastic noise models}. A somewhat complementary
  direction to the one taken here would be to assume that the distance
  measurements are $\tilde{d}_{ij}^2=d_{ij}^2+z_{ij}$ with
  $\{z_{ij}\}$ a collection of independent zero-mean random
  variables. This would be a good model, for instance, for errors in
  RSSI measurements. 

  Another interesting case would be the one in which a small subset of 
  measurements are grossly incorrect (e.g. due to node malfunctioning,
  obstacles, etc.).
\item \emph{Tighter convex relaxations}. The relaxation considered here 
is particularly simple, and can be improved in several ways. For
instance, in manifold learning it is useful to maximize the 
embedding variance $\Tr(Q)$ under the constraint $Qu=0$
\cite{Weinberger06anintroduction}.

Also, for any pair $(i,j)\not\in E$ it is possible to add a
constraint of the form 
$\langle M_{ij},Q\rangle\le \hat{d}_{ij}^2$, where $\hat{d}_{ij}$ is an upper
bound on the distance obtained by computing the shortest path between
$i$ and $j$ in $G$.
\item \emph{More general geometric problems}.
The present paper analyzes the problem of reconstructing the geometry
of a cloud of points from incomplete and inaccurate measurements of
the points local geometry. From this point of view, a number of interesting
extensions can be explored. For instance, instead of
distances, it might be possible to measure angles between edges in the
graph $G$ (indeed in sensor networks, angles of arrival might be
available \cite{SensorNetworks1,SensorNetworks2}). 
\end{enumerate}

\section{Proof of Theorem~\ref{thm:main_result} (Upper Bound)}
\label{sec:main_thm_proof}

 Let $V = \langle u, x^{(1)},\cdots, x^{(d)}\rangle$ and for any matrix $S \in \mathbb{R}^{n \times n}$, define
\begin{equation}
\label{def:orth_part}
\tilde{S} = P_V SP_V + P_V S P^{\perp}_V + P^{\perp}_V S P_V, \quad \quad S^{\perp} = P^{\perp}_V S P^{\perp}_V \, .
\end{equation}  
Thus $S = \tilde{S} + S^{\perp}$. Also, denote by $R$ the difference between the optimum solution $Q$ and the actual Gram matrix $Q_0$, i.e., $R = Q - Q_0$. The 
proof of Theorem \ref{thm:main_result} is based on the following
key lemmas  that bound $R^{\perp}$ and $\tilde{R}$ separately. 
\begin{lem}
\label{lem:bound on Rperp}
There exists a numerical constant $C = C(d)$, such that, w.h.p.,
\begin{equation}
\|R^{\perp}\|_* \leq C \frac{n}{r^4} (nr^d)^5 \Delta.
\end{equation}
\end{lem} 

\begin{lem}
\label{lem:bound on Rtilde}
There exists a numerical constant $C = C(d)$, such that, w.h.p.,
\begin{equation}
\|\tilde{R}\|_1 \leq C \frac{n^2}{r^4} (nr^d)^5 \Delta.
\end{equation}
\end{lem}
We defer the proof of lemmas~\ref{lem:bound on Rperp} and~\ref{lem:bound on Rtilde} to the next section.

\begin{proof}[Proof (Theorem~\ref{thm:main_result})] Let $Q = \sum_{i=1}^n \sigma_i u_i u_i^T$, where $\|u_i\|=1$, $u_i^Tu_j = 0$ for $i \neq j$ and $\sigma_1 \geq \sigma_2\geq \cdots \geq \sigma_n \ge0$. Let $\cP_d(Q) = \sum_{i=1}^d \sigma_i u_i u_i^T$ be the best rank-$d$ approximation of $Q$ in Frobenius norm (step 2 in the algorithm). Recall that $Qu = 0$, because $Q$ minimizes $\Tr(Q)$. Consequently, $\cP_d(Q) u = 0$ and $\cP_d(Q) = L\cP_d(Q)L$. Further, by our assumption $Q_0 u =0$ and thus $Q_0 = LQ_0L$. Using triangle inequality,
\begin{align}
\label{eq:bound1}
\|L\cP_d(Q)L - LQ_0L&\|_1 = \|\cP_d(Q) - Q_0\|_1 \nonumber\\
& \leq \|\cP_d(Q) - \tilde{Q}\|_1 + \|\tilde{Q} - Q_0\|_1.
\end{align}
Observe that, $\tilde{Q} = Q_0 + \tilde{R}$ and $Q^{\perp} = R^{\perp}$. Since $\cP_d(Q) - \tilde{Q}$ has rank at most $3d$, it follows that
$\|\cP_d(Q) - \tilde{Q}\|_1 \leq n \|\cP_d(Q) - \tilde{Q}\|_F \leq \sqrt{3 d} n \|\cP_d(Q) - \tilde{Q}\|_2$
(for any matrix $A$, $\|A\|_F^2 \leq \rank(A) \|A\|_2^2$).
By triangle inequality, we have
\begin{equation}
\label{eq:bound3}
\|\cP_d(Q) - \tilde{Q}\|_2 \leq \|\cP_d(Q) - Q\|_2 + \|\underbrace{Q-\tilde{Q}}_{R^{\perp}}\|_2.
\end{equation} 
Note that $\|\cP_d(Q) - Q\|_2 = \sigma_{d+1}$. Recall the variational principle for the eigenvalues.
\begin{equation*}
\sigma_q = \min_{H,dim(H) = n- q +1} \max_{y \in H, \|y\|=1} y^TQy.
\end{equation*}
Taking $H = \langle x^{(1)},\cdots,x^{(d)}\rangle^{\perp}$, for any $y \in H$,
 $y^TQy = y^T P^{\perp}_V Q P^{\perp}_Vy = y^T Q^{\perp} y = y^T R^{\perp} y$,
 where we used the fact $Qu = 0$ in the first equality. 
Therefore,
 $\sigma_{d+1} \leq \max_{\|y\|=1} y^T R^{\perp} y = \|R^{\perp}\|_2$ 
 It follows from Eqs.~\eqref{eq:bound1} and~\eqref{eq:bound3} that
 \begin{equation*}
 \|L\cP_d(Q)L - LQ_0L\|_1 \leq
 2 \sqrt{3d} n \|R^{\perp}\|_2 + \|\tilde{R}\|_1.
 \end{equation*}
Using Lemma~\ref{lem:bound on Rperp} and~\ref{lem:bound on Rtilde}, we obtain
\begin{equation*}
d(X,\hX) = \frac{1}{n^2} \|L\cP_d(Q)L-LQ_0L\|_1 \leq C (n r^d)^5 \frac{\Delta}{r^4},
\end{equation*}
which proves the claimed upper bound on the error.

The lower bound is proved in Section~\ref{sec:converse}.
\end{proof}
%
\section{Proof of Lemma~\ref{lem:bound on Rperp}}
\label{sec:bound on Rperp}
The proof is based on the following three steps: $(i)$ Upper bound $\|R^{\perp}\|_*$ in terms of $\sigma_{\min}(\Omega)$ and $\sigma_{\max}(\Omega)$, where $\Omega$ is an arbitrary positive semidefinite (PSD) stress matrix of rank $n-d-1$ for the framework; $(ii)$ Construct a particular PSD stress matrix $\Omega$ of rank $n-d-1$ for the framework; $(iii)$ Upper bound $\sigma_{\max}(\Omega)$ and lower bound $\sigma_{\min}(\Omega)$.

\begin{thm}
\label{thm:Rtilde_bound}
Let $\Omega$ be an arbitrary PSD stress matrix for the framework $G_X$ such that $\rank(\Omega) = n -d -1$. Then,
\begin{equation}
\label{eq:Rperp_bound}
\|R^{\perp}\|_* \leq 2 \frac{\sigma_{\max}(\Omega)}{\sigma_{\min}(\Omega)} |E| \Delta.
\end{equation}
\end{thm}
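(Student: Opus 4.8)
The plan is to exploit two facts simultaneously: that $R = Q - Q_0$ satisfies the "smallness on edges" condition $|\langle M_{ij}, R\rangle| \le 2\Delta$ for all $(i,j) \in E$ (since both $Q$ and $Q_0$ obey the SDP constraints), and that $\Omega$ being a stress matrix of rank $n-d-1$ forces $R^{\perp}$ to be controlled. First I would recall that for a stress matrix, $\Omega = \sum_{(i,j) \in E} \omega_{ij} M_{ij}$ (this is precisely the rearrangement of the stress condition into matrix form), so that $\langle \Omega, R \rangle = \sum_{(i,j)\in E} \omega_{ij} \langle M_{ij}, R\rangle$. Hence $|\langle \Omega, R\rangle| \le (\sum_{(i,j)} |\omega_{ij}|)\cdot 2\Delta \le |E| \cdot \omega_{\max} \cdot 2\Delta$, where $\omega_{\max} = \max |\omega_{ij}|$. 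The first step is therefore to bound $\langle \Omega, R\rangle$ from above in terms of $|E|$, $\Delta$, and the entries of $\Omega$, and to note that $\omega_{\max} \le \sigma_{\max}(\Omega)$ up to a constant (each $|\omega_{ij}| = |\Omega_{ij}|$ is bounded by the operator norm).

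**Relating $\langle \Omega, R\rangle$ to $\|R^{\perp}\|_*$.**

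The crucial structural observation is that the null space of $\Omega$ is exactly $V = \langle u, x^{(1)}, \dots, x^{(d)}\rangle$, because $\Omega$ annihilates the all-ones vector and all coordinate vectors, and has rank $n-d-1 = n - \dim V$. Thus $\Omega = P_V^{\perp} \Omega P_V^{\perp}$, which gives $\langle \Omega, R\rangle = \langle \Omega, P_V^{\perp} R P_V^{\perp}\rangle = \langle \Omega, R^{\perp}\rangle$. Now I would use positive semidefiniteness: both $\Omega \succeq 0$ and — here is the point where the SDP structure enters — $R^{\perp} = Q^{\perp} = P_V^{\perp} Q P_V^{\perp} \succeq 0$ since $Q \succeq 0$ (whereas $Q_0^{\perp} = 0$ because $Q_0$ has rank $d$ with column space contained in $V$). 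So $R^{\perp}$ is PSD, and we can write $\langle \Omega, R^{\perp}\rangle \ge \sigma_{\min}(\Omega) \cdot \Tr(R^{\perp}) = \sigma_{\min}(\Omega)\|R^{\perp}\|_*$, using that $\sigma_{\min}(\Omega)$ restricted to $V^{\perp}$ is the smallest nonzero eigenvalue and $R^{\perp}$ lives on $V^{\perp}$. Combining the upper and lower bounds on $\langle \Omega, R^{\perp}\rangle$ yields $\sigma_{\min}(\Omega)\|R^{\perp}\|_* \le 2|E|\Delta\, \sigma_{\max}(\Omega)$, which is exactly the claimed inequality (the constant $2$ absorbing the factor from $|\omega_{ij}| \le \sigma_{\max}(\Omega)$ being possibly off; one should check whether the factor works out cleanly or whether $\omega_{ij}$ needs more care).

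**The main obstacle.**

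The step I expect to require the most care is justifying that $R^{\perp} \succeq 0$ and, relatedly, that $\langle M_{ij}, R\rangle$ is genuinely bounded by $2\Delta$ — this needs $\langle M_{ij}, Q_0\rangle = d_{ij}^2$ exactly, together with the SDP constraint $|\langle M_{ij}, Q\rangle - \tilde d_{ij}^2| \le \Delta$ and the noise bound $|\tilde d_{ij}^2 - d_{ij}^2| \le \Delta$, so the triangle inequality gives $|\langle M_{ij}, R\rangle| \le 2\Delta$. The subtler point is the claim $\sigma_{\min}(\Omega)\Tr(R^{\perp}) \le \langle \Omega, R^{\perp}\rangle$: this requires that $R^{\perp}$ and $\Omega$ are both supported on $V^{\perp}$ and both PSD there, so that diagonalizing $\Omega$ on $V^{\perp}$ and writing $\langle \Omega, R^{\perp}\rangle = \sum_k \lambda_k (R^{\perp})_{kk} \ge \lambda_{\min} \sum_k (R^{\perp})_{kk}$ in that eigenbasis is valid — which it is, since each $(R^{\perp})_{kk} \ge 0$. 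I would double-check that the rank condition $\rank(\Omega) = n-d-1$ is used precisely to guarantee $\mathrm{Ker}(\Omega) = V$ (not something larger), since $u, x^{(1)}, \dots, x^{(d)}$ are automatically in the kernel and genericity of $X$ ensures they are linearly independent, pinning the kernel dimension at exactly $d+1$.
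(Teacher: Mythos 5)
Your proposal is correct and follows essentially the same route as the paper's proof: bound $\langle\Omega,R\rangle=\langle\Omega,R^{\perp}\rangle$ above by $2\omega_{\max}|E|\Delta$ using the SDP constraints and the expansion $\Omega=\sum_{(i,j)\in E}\omega_{ij}M_{ij}$, bound it below by $\sigma_{\min}(\Omega)\|R^{\perp}\|_*$ using $R^{\perp}=Q^{\perp}\succeq 0$ supported on $V^{\perp}=\mathrm{Ker}^{\perp}(\Omega)$, and use $\omega_{\max}\le\sigma_{\max}(\Omega)$ (which holds exactly, e.g.\ via $\Omega_{ij}^2\le\Omega_{ii}\Omega_{jj}\le\sigma_{\max}^2(\Omega)$ for PSD $\Omega$, so the constant $2$ comes only from the triangle inequality on the noise and needs no further adjustment).
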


\begin{proof}
Note that $R^{\perp} = Q^{\perp} = P^{\perp}_V Q P^{\perp}_V \succeq \bf{0}$. Write $R^{\perp} = \sum_{i=1}^{n -d -1} \lambda_i u_i u_i^T$, where $\|u_i\| = 1$, $u_i^T u_j = 0$ for $i \neq j$ and $\lambda_1 \geq \lambda_2 \geq \cdots \lambda_{n-d-1} \geq 0$. Therefore,
\begin{equation}
\label{eq:lb_OmegaRperp}
\langle \Omega, R^{\perp} \rangle = 
\langle \Omega, \sum_{i=1}^{n-d-1} \lambda_i\; u_i u_i^T \rangle =
\sum_{i=1}^{n-d-1} \lambda_i u_i^T \Omega u_i \geq \sigma_{\min}(\Omega) \|R^{\perp}\|_*.
\end{equation}
Here, we used the fact that $u_i \in V^{\perp} = \text{Ker}^{\perp}(\Omega)$. Note that $\sigma_{\min}(\Omega) > 0$, since $\Omega \succeq \bf{0}$.

Now, we need to upper bound the quantity $\langle \Omega, R^{\perp}\rangle$. Since $\Omega u = 0$, the stress matrix $\Omega = [\omega_{ij}]$ can be written as $\Omega = \sum_{(i,j) \in E} \omega_{ij} M_{ij}$. Define $\omega_{\max} = \underset{i \neq j}{\max} |\omega_{ij}|$. Then,
\begin{align}
\langle \Omega, R^{\perp} \rangle &\stackrel{(a)}{=}  \langle \Omega, R \rangle = \sum_{(i,j) \in E} \omega_{ij} \langle M_{ij}, R\rangle \nonumber\\
& \leq \sum_{(i,j) \in E} \omega_{\max} |\langle M_{ij}, Q - Q_0\rangle| \nonumber\\
& \leq \sum_{(i,j) \in E} \omega_{\max} (|\langle M_{ij}, Q \rangle - \tilde{d_{ij}}^2| + |\underbrace{\tilde{d_{ij}}^2 - d^2_{ij}}_{z_{ij}}|) \nonumber\\
&\leq 2 \omega_{\max} |E| \Delta, \label{eq:ub_OmegaRperp}
\end{align}
where $(a)$ follows from the fact that $\langle P_V,\Omega\rangle = 0$. Since $\Omega \succeq \bf{0}$, we have $\omega_{ij}^2 \leq \omega_{ii}  \omega_{jj} = (e_i^T \Omega e_i) (e_j^T \Omega e_j) \leq \sigma_{\max}^2 (\Omega)$, for $1 \leq i,j \leq n$. Hence, $\omega_{\max} \leq \sigma_{\max}(\Omega)$. Combining Eqs.~\eqref{eq:lb_OmegaRperp} and~\eqref{eq:ub_OmegaRperp}, we get the desired result.
\end{proof}

Next step is constructing a PSD stress matrix of rank $n-d-1$.
For each node $i \in V(G)$ define \mbox{$\mathcal{C}_i = \{j \in V(G) : d_{ij} \leq r / 2\}$}. Note that the nodes in each $\mathcal{C}_i$ form a clique in $G$. In addition, let $S_i$ be the following set of cliques. 
\begin{equation*}
S_i :=  \underset{k \in \Ci}{\cup} \{\Ci \backslash k\} \cup \{\Ci \}. 
\end{equation*}
Therefore, $S_i$ is a set of $|\Ci|+1$ cliques. For the graph $G$, we define $\cliq(G) := S_1\cup \cdots \cup S_n$. Next lemma establishes a simple property of cliques $\C_i$. Its proof is immediate and deferred to Appendix~\ref{App:sampling_lemma}.
\begin{pro}
\label{pro:sampling_lemma}
If $r \ge 4 c \sqrt{d}(\log n /n)^{1/d}$ with $c > 1$, the following is true w.h.p.. For any two nodes $i$ and $j$, such that $\|x_i -x_j\| \leq r/2$, $| \mathcal{C}_i \cap \mathcal{C}_j| \geq d+1$. 
\end{pro}

Now we are ready to construct a special stress matrix $\Omega$ of $G_X$. Define the $|\Q_k| \times |\Q_k|$ matrix $\Omega_k$ as follows. 
\begin{equation*}
\Omega_k =  P^{\perp}_{\langle u^{}_{\mathcal{Q}_k},x^{(1)}_{\mathcal{Q}_k},\cdots,x^{(d)}_{\mathcal{Q}_k}\rangle}.
\end{equation*}
Let $\hat{\Omega}_k$ be the $n \times n$ matrix obtained from $\Omega_k$ by padding it with zeros. Define 
\begin{equation*}
\Omega = \underset{\mathcal{Q}_k \in \cliq(G)}{\sum} \hat{\Omega}_k.
\end{equation*}
The proof of the next statement is again immediate and discussed in Appendix~\ref{App:stress_matrix}.
\begin{pro}
\label{pro:stress_construction}
The matrix $\Omega$ defined above is a positive semidefinite (PSD) stress matrix for the framework $G_X$. Further, almost surely, $\rank(\Omega) = n-d-1$.
\end{pro}

Final step is to upper bound $\sigma_{\max}(\Omega)$ and lower bound $\sigma_{\min}(\Omega)$.

\begin{claim}
\label{claim:sigma_max_omega}
There exists a constant $C = C(d)$, such that, w.h.p.,
\begin{equation*}
\sigma_{\max}(\Omega) \leq C (n r^d)^2.
\end{equation*}
\end{claim}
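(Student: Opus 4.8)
The plan is to bound $\sigma_{\max}(\Omega) = \|\Omega\|_2$ by decomposing $\Omega$ according to the sum over cliques in $\cliq(G)$ and estimating both the operator norm of each individual term and the amount of overlap between terms. First I would note that each $\hat\Omega_k$ is an orthogonal projection, hence $\|\hat\Omega_k\|_2 = 1$ (or $0$), and more importantly $\hat\Omega_k \succeq \bf{0}$, so that $\Omega = \sum_k \hat\Omega_k \succeq \bf{0}$ and $\|\Omega\|_2 = \sigma_{\max}(\Omega) = \max_{\|y\|=1} y^T\Omega y = \max_{\|y\|=1}\sum_k y_{\Q_k}^T \Omega_k y_{\Q_k} \le \max_{\|y\|=1}\sum_k \|y_{\Q_k}\|^2$. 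Thus it suffices to bound $\sum_{\Q_k\in\cliq(G)} \|y_{\Q_k}\|^2$ uniformly over unit vectors $y$, which equals $\sum_{\ell=1}^n y_\ell^2 \cdot \#\{\Q_k \in \cliq(G): \ell \in \Q_k\}$. Hence the whole claim reduces to bounding $N_\ell := \#\{\Q_k\in\cliq(G) : \ell\in\Q_k\}$, the number of cliques in the construction that contain a fixed node $\ell$, uniformly in $\ell$.

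Next I would count $N_\ell$ using the explicit structure of $\cliq(G) = S_1\cup\cdots\cup S_n$, where $S_i$ consists of the clique $\C_i$ and the $|\C_i|$ cliques $\C_i\setminus\{k\}$ for $k\in\C_i$. A node $\ell$ belongs to a clique coming from $S_i$ only if $\C_i$ itself contains $\ell$, i.e. only if $d_{i\ell}\le r/2$; and for each such $i$ there are at most $|\C_i|+1$ cliques in $S_i$ (all of which might contain $\ell$). Therefore $N_\ell \le \sum_{i : d_{i\ell}\le r/2} (|\C_i|+1)$. By Corollary \ref{cor:deg_GNR} applied with range $r/2$ (the nodes within distance $r/2$ of a fixed node number at most $(3/2)K_d n (r/2)^d \le (3/2)K_d n r^d$ w.h.p., after a union bound over all $n$ choices of center), both the number of indices $i$ in the sum and each $|\C_i|$ are $O(nr^d)$ w.h.p. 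Combining, $N_\ell \le C(nr^d)^2$ uniformly in $\ell$ w.h.p., and hence $\sum_k\|y_{\Q_k}\|^2 \le C(nr^d)^2\|y\|^2$, giving $\sigma_{\max}(\Omega)\le C(nr^d)^2$.

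The only technical point requiring care is that all the degree/clique-size estimates must hold simultaneously for every node, so I would invoke the union-bounded version of the Sampling Lemma (Remark \ref{rem:region} / Corollary \ref{cor:deg_GNR}) applied to the $(r/2)$-balls around all $n$ nodes; since $r/2 \ge 5\sqrt d(\log n/n)^{1/d}$ the lemma still applies with room to spare, and the failure probability is $o(1)$ after the union bound. I do not expect any genuine obstacle here: the positive-semidefiniteness of $\Omega$ trivializes the operator-norm bound into a counting problem, and the counting is a direct consequence of the bounded-degree property of $G(n,r)$ established earlier. The mildly delicate step is simply the bookkeeping that each node $\ell$ sees at most $O(nr^d)$ relevant centers $i$, each contributing at most $O(nr^d)$ cliques, which is why the exponent is $2$ rather than $1$.
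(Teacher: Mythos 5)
Your proposal is correct and follows essentially the same route as the paper: bound $y^T\Omega y$ by $\sum_{\Q_k}\|y_{\Q_k}\|^2=\sum_{\ell}y_\ell^2\,\#\{\Q_k\ni \ell\}$ using that each $\hat\Omega_k$ is a projection, and then count at most $O(nr^d)$ relevant centers each contributing $O(nr^d)$ cliques, via the clique-size bound $|\C_i|\le Cnr^d$ from Corollary~\ref{cor:deg_GNR}. The only cosmetic difference is that you make the union-bound/radius-$r/2$ bookkeeping explicit, which the paper leaves implicit.
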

\begin{proof}
For any vector $v \in \mathbb{R}^n$,
\begin{align*}
 v^T \Omega v &= \sum_{\Q_k \in \cliq(G)} v^T \hat{\Omega}_k v = \sum_{\Q_k \in \cliq(G)} \| \hat{\Omega}_k v\|^2 =  \underset{\Q_k \in \cliq(G)}{\sum} \| P^{\perp}_{\langle u_{\Q_k},x^{(1)}_{\Q_k},\cdots,x^{(d)}_{\Q_k}\rangle} v_{\Q_k}\|^2 \\
 & \leq \sum_{\Q_k \in \cliq(G)} \|v_{\Q_k}\|^2 
 = \sum_{j=1}^n v^2_j \sum_{k: j \in \Q_k} 1 = \sum_{j=1}^n (\sum_{i \in \Cj} |\Ci|) v_j^2  \leq (C nr^d  \|v\|) ^2. 
 \end{align*}
 The last inequality follows from the fact that, w.h.p., $|\mathcal{C}_j| \leq C nr^d$ for all $j$ and some constant $C$ (see Corollary~\ref{cor:deg_GNR}).
\end{proof}

We now pass to lower bounding $\sigma_{\min}(\Omega)$.


\begin{thm}
\label{thm:OmegaL}
There exists a constant $C=C(d)$, such that, w.h.p., $\Omega^{\perp} \succeq C(nr^d)^{-3} r^2 \mathcal{L}^{\perp}$. \textup{(}see Eq.~\eqref{def:orth_part}\textup{)}. 
\end{thm}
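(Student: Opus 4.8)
## Proof Plan for Theorem~\ref{thm:OmegaL}

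\textbf{Overall strategy.} The plan is to prove the matrix inequality $v^T\Omega v \ge C(nr^d)^{-3} r^2\, v^T\L v$ for every $v \in V^{\perp}$, which is equivalent to the stated claim after restricting to the orthogonal complement of $V$ (note both $\Omega$ and $\L$ annihilate $u$, and $\Omega$ annihilates all of $V$ by construction). Since both sides are quadratic forms that decompose as sums over local structures, I would work edge-by-edge on the right and clique-by-clique on the left, and show that each edge contribution of $\L$ can be ``paid for'' by nearby clique contributions of $\Omega$, losing only a factor polynomial in the average degree $nr^d$ and a factor $r^2$ from the geometry.

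\textbf{Key steps.} First, I would write $v^T\L v = \sum_{(i,j)\in E}(v_i-v_j)^2$ and $v^T\Omega v = \sum_{\Q_k\in\cliq(G)} \|P^{\perp}_{\langle u_{\Q_k}, x^{(1)}_{\Q_k},\dots,x^{(d)}_{\Q_k}\rangle} v_{\Q_k}\|^2$. The crucial geometric fact is a \emph{local} spectral bound: for a single clique $\Q$ on a set of $m = |\Q|$ points that are ``well-spread'' inside a ball of radius $O(r)$ (which holds w.h.p.\ for all $\C_i$ and their one-point deletions by the Sampling Lemma, Remark~\ref{rem:region}, and Proposition~\ref{pro:sampling_lemma}), the smallest nonzero singular value of $P^{\perp}_{\langle u_{\Q},x^{(1)}_{\Q},\dots,x^{(d)}_{\Q}\rangle}$ equals $1$, but more usefully, the projector controls the \emph{variance} of $v$ over $\Q$ in the directions transverse to the affine-plus-constant span. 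Concretely I expect to show $\|P^{\perp}_{\langle u_{\Q},\dots\rangle} v_{\Q}\|^2 \ge c\, \min_{a\in\reals, b\in\reals^d}\sum_{\ell\in\Q}(v_\ell - a - b^Tx_\ell)^2$ with $c$ an absolute constant, and then that this ``residual after affine fit'' quantity, summed over the cliques containing a fixed edge $(i,j)$, is bounded below by a constant multiple of $(v_i-v_j)^2$ divided by $r^2$ — the $r^2$ entering because $\|x_i-x_j\|\le r$ so the linear term $b^T(x_i-x_j)$ can absorb part of $v_i-v_j$ only up to scale $r\|b\|$, and one then needs the second moment of the point cloud in the clique (which is $\Theta(r^2)$) to control $\|b\|$.

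Second, after establishing this per-clique lower bound I would sum: each edge $(i,j)\in E$ is contained in $\C_i\cap\C_j$ which has at least $d+1$ nodes w.h.p.\ (Proposition~\ref{pro:sampling_lemma}), so the edge sits inside at least one clique $\Q_k$ of the form $\C_\ell$ or $\C_\ell\setminus k$ with $\ell$ ranging over a set of size $\Theta(nr^d)$; conversely each clique has size at most $C nr^d$ (Corollary~\ref{cor:deg_GNR}), so the overlap multiplicities are controlled. Tracking the losses — one factor of $nr^d$ from the number of cliques an edge belongs to, another from clique sizes entering the local variance estimate, and a third from converting the quadratic-form inequality on individual quadratics into the global one (plus the $r^2$) — yields the exponent $(nr^d)^{-3}r^2$ stated. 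Finally I would restrict the resulting inequality $\Omega \succeq C(nr^d)^{-3}r^2\,\L$ (valid on all of $\reals^n$ since both sides vanish on $u$ and $\Omega$ vanishes on $V\supseteq\langle u\rangle$) to $V^{\perp}$: on $V^\perp$ one has $\Omega = \Omega^\perp$ and $P^\perp_V\L P^\perp_V = \L^\perp$, so the claim follows.

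\textbf{Main obstacle.} The hard part will be the local estimate: quantitatively lower-bounding $\|P^{\perp}_{\langle u_{\Q},x^{(1)}_{\Q},\dots,x^{(d)}_{\Q}\rangle} v_{\Q}\|^2$ — equivalently, the least-squares residual of fitting $v$ by an affine function of the positions over the clique $\Q$ — by the local Dirichlet energy $\sum_{(i,j): i,j\in\Q}(v_i-v_j)^2$ times the right power of $r$ and $nr^d$. This requires a uniform (over all relevant cliques, w.h.p.) lower bound on the smallest eigenvalue of the empirical covariance matrix $\frac1{|\Q|}\sum_{\ell\in\Q}(x_\ell-\bar x_{\Q})(x_\ell-\bar x_{\Q})^T$ of the points in a clique, showing it is $\Theta(r^2)$ — i.e.\ the points in each clique are genuinely $d$-dimensional and not nearly degenerate. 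That in turn follows from the Sampling Lemma applied to sub-regions of each clique's ball, but making it uniform over the $O(n^2)$ cliques in $\cliq(G)$ via a union bound, while keeping the probability $1-o(1)$, is the delicate quantitative core of the argument, and is presumably where the use of $r \ge 10\sqrt d(\log n/n)^{1/d}$ (enough $\log n$ margin for the union bound) is essential.
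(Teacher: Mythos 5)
Your reduction to proving $v^T\Omega v \ge C(nr^d)^{-3}r^2\, v^T\L v$ for $v\in V^{\perp}$ is fine, but the core local step of your plan is false, and the failure is precisely the difficulty the theorem must overcome. You propose that the affine-fit residual $\|P^{\perp}_{\langle u_{\Q},x^{(1)}_{\Q},\dots,x^{(d)}_{\Q}\rangle} v_{\Q}\|^2$, summed over the cliques $\Q$ containing a fixed edge $(i,j)$, dominates $c\,(v_i-v_j)^2/r^2$. Take a $v$ that on a neighborhood of that edge coincides with an affine function of the positions, $v_\ell = a + b^Tx_\ell$ (such a $v$ can perfectly well lie in $V^{\perp}$ globally, since the affine coefficients only need to vary elsewhere): every residual in your sum is then zero while $(v_i-v_j)^2=(b^T(x_i-x_j))^2>0$, so no constant works, and there is no non-circular way to ``control $\|b\|$ by the second moment of the cloud'' — the locally affine part is invisible to the projectors that build $\Omega$. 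The same issue surfaces at your final step: you assert the unrestricted inequality $\Omega \succeq C(nr^d)^{-3}r^2\,\L$ on all of $\reals^n$, which is impossible for any positive constant because the centered coordinate vectors $P^{\perp}_u x^{(\ell)}$ lie in the kernel of $\Omega$ yet satisfy $(x^{(\ell)})^T\L\, x^{(\ell)}>0$. Any edge-by-edge ``payment'' scheme would prove that false unrestricted inequality, so the approach cannot be repaired by sharpening the local covariance estimate.

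The paper's proof instead is a local-to-global gluing argument in which the restriction $v\perp V$ is used globally. One writes $v_{\Q_i}=\sum_{\ell}\beta^{(\ell)}_i\tilde{x}^{(\ell)}_{\Q_i}+\gamma_i u_{\Q_i}+w^{(i)}$, so that $v^T\Omega v=\sum_i\|w^{(i)}\|^2$, and bounds $v^T\L v\le C\sum_i\bigl(\sum_{\ell}\|\beta^{(\ell)}_i\tilde{x}^{(\ell)}_{\Q_i}\|^2+\|w^{(i)}\|^2\bigr)$ via the comparison Claim~\ref{claim:Laplacian}. The affine coefficients are then controlled by consistency rather than locally: on the overlap of two cliques the two representations must agree, so $\|w^{(i)}_{\Q_i\cap\Q_j}-w^{(j)}_{\Q_i\cap\Q_j}\|^2$ dominates $(nr^d)r^2\sum_{\ell}(\beta^{(\ell)}_i-\beta^{(\ell)}_j)^2$ (Claim~\ref{claim:Azuma_bound} — this non-degeneracy of the local covariance is essentially what you flagged as the main obstacle, but it is applied to coefficient differences, not to edge differences of $v$); the spectral gap of an auxiliary graph $G^*$ whose vertices are the cliques (Claim~\ref{claim:Gstar}) then propagates this to force each $\beta^{(\ell)}$ to be nearly constant; and finally Claim~\ref{claim:beta_mean} uses $v\perp V$ to show the constant (mean) part of $\beta^{(\ell)}$ is itself small — the one place where the restriction to $V^{\perp}$ enters and the reason the statement concerns $\Omega^{\perp}$ versus $\L^{\perp}$ rather than $\Omega$ versus $\L$. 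Your plan contains none of this consistency/propagation machinery, and without it the claimed bound cannot be reached.
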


The proof is given in Section~\ref{sec:OmegaL}. We are finally in position to prove Lemma~\ref{lem:bound on Rperp}.
\begin{proof}[Proof (Lemma~\ref{lem:bound on Rperp})]
Following Theorem~\ref{thm:OmegaL} and Remark~\ref{rem:spec_GNR}, we obtain $\sigma_{\min}(\Omega) \geq C(nr^d)^{-2}r^4$. Also, by Corollary~\ref{cor:deg_GNR}, w.h.p., the node degrees in $G$ are bounded by $3/2 K_d nr^d$. Hence, w.h.p., $|E| \leq 3/4 n^2 K_d r^d$.
Using the bounds on $\sigma_{\max}(\Omega)$, $\sigma_{\min}(\Omega)$ and $|E|$ in Theorem~\ref{thm:Rtilde_bound} yields the thesis. 
\end{proof}

\subsection{Proof of Theorem~\ref{thm:OmegaL}}
\label{sec:OmegaL}
Before turning to the proof, it is worth mentioning that the authors in~\cite{Belkin02laplacianeigenmaps} propose a heuristic argument showing $\Omega v \approx \L^2 v$ for smoothly varying vectors $v$. Since $\sigma_{\min}(\L) \geq C(nr^d) r^2$ (see Remark~\ref{rem:spec_GNR}), this heuristic supports the claim of the theorem.

In the following, we first establish some claims and definitions which will be used in the proof.

\begin{claim}
\label{claim:Laplacian}
There exists a constant $C=C(d)$, such that, w.h.p.,
\begin{equation*}
\mathcal{L} \preceq C \sum_{k=1}^n P_{u^{}_{\mathcal{C}_k}}^{\perp}.
\end{equation*}
\end{claim}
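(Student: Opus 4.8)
The plan is to prove the claim by a flow-based Poincar\'e (canonical-paths) comparison of Laplacians, after rewriting both sides of the inequality as honest graph Laplacians. The first step is the elementary identity $P^{\perp}_{u_{\mathcal{C}_k}} = \frac{1}{|\mathcal{C}_k|}\,\mathcal{L}(K_{\mathcal{C}_k})$, where $K_{\mathcal{C}_k}$ is the complete graph on the vertex set $\mathcal{C}_k$, padded with zeros outside $\mathcal{C}_k\times\mathcal{C}_k$ (indeed $\mathcal{L}(K_m)=mI-uu^T$ while $P^{\perp}_{u}=I-\tfrac1m uu^T$ on $m$ coordinates). Hence $\sum_{k=1}^n P^{\perp}_{u_{\mathcal{C}_k}}=\mathcal{L}(H)$, where $H$ is the weighted graph on $V$ with edge weight $w^{H}_{ab}=\sum_{k:\,a,b\in\mathcal{C}_k}|\mathcal{C}_k|^{-1}$ on the pair $\{a,b\}$. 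So the claim is equivalent to $\mathcal{L}(G(n,r))\preceq C(d)\,\mathcal{L}(H)$.

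Second, I would give a lower bound $\mathcal{L}(H)\succeq c(d)\,\mathcal{L}(G(n,r/2))$. For any pair $\{a,b\}$ with $d_{ab}\le r/2$, the ball $B((x_a+x_b)/2,\,r/4)$ is contained in $B(x_a,r/2)\cap B(x_b,r/2)$, so by the Sampling Lemma (Remark~\ref{rem:region}) at least $c_1(d)\,nr^d$ nodes $k$ have $a,b\in\mathcal{C}_k$, while $|\mathcal{C}_k|\le C_2(d)\,nr^d$ for all $k$ by Corollary~\ref{cor:deg_GNR}; therefore $w^{H}_{ab}\ge c(d)>0$ whenever $d_{ab}\le r/2$. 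Since also $w^H_{ab}=0$ whenever $d_{ab}>r$, this gives $v^T\mathcal{L}(H)v\ge c(d)\sum_{\{a,b\}:\,d_{ab}\le r/2}(v_a-v_b)^2=c(d)\,v^T\mathcal{L}(G(n,r/2))v$. It thus suffices to show $\mathcal{L}(G(n,r))\preceq C(d)\,\mathcal{L}(G(n,r/2))$.

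Third, and this is the core of the proof, I would establish $\mathcal{L}(G(n,r))\preceq C(d)\,\mathcal{L}(G(n,r/2))$ by routing each edge of $G(n,r)$ as a flow in $G(n,r/2)$. An edge $\{a,b\}$ with $d_{ab}\le r/2$ uses the trivial one-hop path; for $r/2<d_{ab}\le r$, place $K-1$ equally spaced points $q_1,\dots,q_{K-1}$ on the segment $[x_a,x_b]$ (with $K=K(d)$ a large constant), let $V_j$ be the set of nodes inside $B(q_j,\rho)$ with $\rho=r/5$, and route a unit flow from $a$ to $b$ split uniformly over the $\prod_j|V_j|$ paths $a\to p_1\to\cdots\to p_{K-1}\to b$, $(p_1,\dots,p_{K-1})\in V_1\times\cdots\times V_{K-1}$. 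Since $\rho<r/4-r/(2K)$, every hop has length $<r/2$, hence is an edge of $G(n,r/2)$; and since $\rho=r/5$ and $\alpha\ge 10\sqrt d$, one has $|V_j|=\Theta_d(nr^d)$ w.h.p.\ simultaneously for all edges, by a union bound over the $O(n^2)$ balls (boundary effects cost only a factor $2^{-d}$ in volume). By the standard flow Poincar\'e inequality, $\mathcal{L}(G(n,r))\preceq \rho_{\mathrm{cong}}\,\mathcal{L}(G(n,r/2))$, where $\rho_{\mathrm{cong}}=\max_{e}\sum_{f}\bigl(\text{length of flow paths of }f\text{ through }e\bigr)\bigl(\text{flow of }f\text{ through }e\bigr)$, the max over edges $e$ of $G(n,r/2)$. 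The key computation is that $\rho_{\mathrm{cong}}=O_d(1)$: if $e=\{c,c'\}$ occupies slot $j$ of the path for $\{a,b\}$, then $x_c\approx q_j$ and $x_{c'}\approx q_{j+1}$ force $x_b-x_a\approx K(x_{c'}-x_c)$ and $x_a\approx x_c-j(x_{c'}-x_c)$ up to error $O(\rho)$, so only $O_d((nr^d)^2)$ edges $\{a,b\}$ route through $e$, each sending flow $O\bigl(\prod_{i\neq j,j+1}|V_i|\,/\prod_i|V_i|\bigr)=O((nr^d)^{-2})$ through $e$; the end-hops are handled the same way, also using the constraint $d_{ab}\le r$. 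Multiplying these and using that path lengths are $\le K=O_d(1)$ yields $\rho_{\mathrm{cong}}=O_d(1)$. Combining the three steps, $\mathcal{L}(G(n,r))\preceq C(d)\,\mathcal{L}(G(n,r/2))\preceq \tfrac{C(d)}{c(d)}\,\mathcal{L}(H)=C'(d)\sum_k P^{\perp}_{u_{\mathcal{C}_k}}$.

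The main obstacle is precisely the congestion bound in the third step. A deterministic choice of one path per edge gives congestion $\Theta((nr^d)^2)$, since a single short edge can lie on the canonical path of that many long edges; it is essential both (i) to spread the flow fractionally over the $\sim nr^d$ possible choices of each intermediate node, and (ii) to use a path with at least two interior vertices, so that an \emph{interior} edge of the path pins down \emph{both} endpoints of the routed edge, whereby the $(nr^d)^2$ overcounting of routed edges is exactly cancelled by the $(nr^d)^{-2}$ flow value. A secondary technical point is that the Sampling-Lemma constants (governed by $\alpha\ge 10\sqrt d$) must be large enough to support the union bound over all $O(n^2)$ intermediate balls; this is what forces the path length $K$ to be a suitably large function of $d$ (so that $\rho$ can be taken close to $r/4$).
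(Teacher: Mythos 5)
Your proposal is correct and follows essentially the same route as the paper: both rewrite $\sum_k P^{\perp}_{u_{\mathcal{C}_k}}$ as a weighted Laplacian with $\Omega(1)$ edge weights on pairs at distance at most $r/2$ (using $|\mathcal{C}_i\cap\mathcal{C}_j|\ge c\, nr^d$ and $|\mathcal{C}_k|\le C\, nr^d$), and then prove $\mathcal{L}(G(n,r))\preceq C\,\mathcal{L}(G(n,r/2))$ by a Markov-chain-comparison/canonical-path argument with $O(1)$ expected congestion. The only difference is presentational: the paper routes each long edge along randomly chosen nodes in bins of side $r/(4\sqrt d)$ and averages the congestion, whereas you spread a fractional flow over balls along the segment and make the congestion bookkeeping (interior hops pinning down both endpoints) more explicit.
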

The argument is closely related to the Markov chain comparison technique~\cite{Persi:Comparison}. The proof is given in Appendix~\ref{App:Laplacian}.

The next claim provides a concentration result about the number of nodes in the cliques $\Ci$. Its proof is immediate and deferred to Appendix~\ref{App:concentration_points}.

\begin{claim}
\label{claim:concentration_points}
For every node $i \in V(G)$, define $\tilde{\Ci} = \{j \in V(G): d_{ij} \leq \frac{r}{2}(\frac{1}{2} + \frac{1}{100})\}$. There exists an integer number $m$ such that the following is true w.h.p..
\begin{equation*}
|\tilde{\Ci}| \leq m \leq |\Ci|, \quad \forall i \in V(G).
\end{equation*}
\end{claim}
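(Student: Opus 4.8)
Because the statement asks for a single integer $m$ that works for every $i$, it is equivalent to showing that, w.h.p., $\max_{i\in V(G)}|\tilde{\Ci}|\le\min_{i\in V(G)}|\Ci|$; any integer wedged between (deterministic bounds for) these two quantities then serves as $m$. So the real content is a uniform two-sided concentration estimate for the ball-counts around the $n$ nodes, together with a volume comparison that forces the $\Ci$-counts above the $\tilde{\Ci}$-counts.

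First I would fix a node $i$, condition on its position $x_i$, and observe that the other $n-1$ nodes remain i.i.d.\ uniform in $[-0.5,0.5]^d$, so $|\Ci|$ equals $1$ plus a $\mathrm{Binomial}(n-1,v(x_i))$ variable, where $v(x):=V\!\big(B(x,r/2)\cap[-0.5,0.5]^d\big)$; likewise $|\tilde{\Ci}|$ equals $1$ plus a $\mathrm{Binomial}(n-1,\tilde v(x_i))$ variable, with $\tilde v$ the analogous volume for the radius $\rho:=\tfrac r2\!\left(\tfrac12+\tfrac1{100}\right)$ defining $\tilde{\Ci}$. Applying the Sampling Lemma (Remark~\ref{rem:region}) conditionally on $x_i$ and then union-bounding over the $n$ nodes and the two radii (exactly as in the proof of Corollary~\ref{cor:deg_GNR}, but now with radii $r/2$ and $\rho$) gives that w.h.p., simultaneously for all $i$,
\[
|\Ci|=n\,v(x_i)\pm\sqrt{2c\,n\,v(x_i)\log n},\qquad |\tilde{\Ci}|=n\,\tilde v(x_i)\pm\sqrt{2c\,n\,\tilde v(x_i)\log n}\,,
\]
for any fixed $c>1$. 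The bounds on $v,\tilde v$ are elementary: since $r=o(1)$, a ball of radius at most $r/2$ centred in the cube lies inside it except for the caps removed by nearby faces, so its retained fraction equals $1$ whenever the centre is at distance at least its radius from every face, and is never smaller than $2^{-d}$ (the value attained at a cube corner). Hence $2^{-d}K_d(r/2)^d\le v(x_i)\le K_d(r/2)^d$ and $\tilde v(x_i)\le K_d\rho^d$; both extremes are realised w.h.p., since the centres at distance $\ge\rho$ from all faces fill a set of volume $(1-2\rho)^d\to1$ (so some node attains $\tilde v=K_d\rho^d$) and the nearest node to any fixed corner lies at distance $\Theta(n^{-1/d})=o(r)$ w.h.p.\ (so $\min_i v(x_i)=2^{-d}K_d(r/2)^d(1+o(1))$).

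Combining the two steps, $\max_i|\tilde{\Ci}|=nK_d\rho^d(1+o(1))$ while $\min_i|\Ci|=n\,2^{-d}K_d(r/2)^d(1+o(1))$; the numerical constant in $\rho$, together with the $2^{-d}$ corner factor and the hypothesis $\alpha\ge10\sqrt d$ (which keeps the relative fluctuations $\sqrt{\log n/(nr^d)}=O(\alpha^{-d/2})$ small), is what forces the second count-range to lie strictly above the first for all large $n$, w.h.p., and hence yields the integer $m$. I expect the only delicate point to be this last balancing near the boundary of the cube: one must control the ball-cube intersection volumes uniformly over all centres (the worst $\Ci$ being clipped to a $2^{-d}$-fraction of the full ball near a corner), and then check that the binomial fluctuations, which are governed precisely by these smallest corner counts, remain below the volume gap. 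This interplay is what dictates both the lower bound $\alpha\ge10\sqrt d$ on the connectivity range and the particular numerical constant in the definition of $\tilde{\Ci}$.
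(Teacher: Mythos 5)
Your overall strategy — reduce the claim to $\max_i|\tilde{\Ci}|\le\min_i|\Ci|$, apply the Sampling Lemma (Remark~\ref{rem:region}) conditionally on $x_i$, and union bound over the $n$ nodes and the two radii — is the same skeleton as the paper's proof, but your final comparison step fails on its own terms, and this is a genuine gap. You correctly work out the boundary-clipped volumes: the node nearest a cube corner is within $o(r)$ of it, so $\min_i|\Ci|$ concentrates around $n\,2^{-d}K_d(r/2)^d$, while an interior node gives $\max_i|\tilde{\Ci}|$ concentrating around $nK_d\rho^d$ with $\rho=\frac r2\left(\frac12+\frac1{100}\right)$. You then assert that the numerical constant in $\rho$ together with the $2^{-d}$ corner factor forces the $\Ci$-range above the $\tilde{\Ci}$-range. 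But your own estimates give the opposite ordering: $K_d\rho^d=\left(\frac12+\frac1{100}\right)^dK_d\left(\frac r2\right)^d>2^{-d}K_d\left(\frac r2\right)^d$, since $\frac12+\frac1{100}>\frac12$. So the mean of the largest $|\tilde{\C}_j|$ exceeds the mean of the smallest $|\Ci|$ by a constant factor $\left(\frac{51}{50}\right)^d$, and no choice of $\alpha$ (which only shrinks the fluctuations, not this gap) can reverse that; the concluding sentence is contradicted by the asymptotics that precede it.

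For comparison, the paper's proof is much cruder at exactly the point where you were careful: it applies Remark~\ref{rem:region} with the \emph{unclipped} volumes $p_d=K_d(r/2)^d$ and $\tilde p_d=\left(\frac12+\frac1{100}\right)^dp_d$ for every node, so the two means it compares are separated by the constant factor $\left(\frac{51}{100}\right)^d<1$, and the $\sqrt{4np_d\log n}$ deviations fit inside that gap once $r\ge 10\sqrt d(\log n/n)^{1/d}$, after a union bound over $i$. In other words, your more faithful treatment of boundary clipping is precisely where the argument stops going through — and it in fact exposes that the uniform two-sided statement is delicate (arguably fails) for nodes near cube corners, which the paper's proof sidesteps by ignoring clipping altogether. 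To make your route yield the stated claim you would have to either justify neglecting boundary effects for the relevant extremes (your own analysis shows you cannot: the clipped $\Ci$ of a near-corner node really is smaller in expectation than an interior $\tilde{\C}_j$), or modify the comparison (e.g.\ a smaller constant in the definition of $\tilde{\Ci}$ plus a quantitative margin over the fluctuation scale); as written, the proposal does not establish the claim.
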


Now, for any node $i$, let $i_1,\cdots i_m$ denote the $m$-nearest neighbors of that node. Using claim~\ref{claim:concentration_points}, $\tilde{\Ci} \subseteq \{i_1,\cdots,i_m\} \subseteq \Ci$. Define the set $\tilde{S}_i$ as follows.
\begin{equation*}
\tilde{S}_i = \{\Ci, \Ci \backslash{i_1},\cdots,\Ci \backslash{i_m}\}.
\end{equation*}
Therefore, $\tilde{S}_i$ is a set of $(m+1)$ cliques. Let $\cliq^*(G) = \tilde{S}_1\cup \cdots \cup \tilde{S}_n$. Note that $\cliq^*(G) \subseteq \cliq(G)$. Construct the graph $G^*$ in the following way. For every element in $\cliq^*(G)$, there is a corresponding vertex in $G^*$. (Thus, $|V(G^*)| = n(m+1)$). Also, for any two nodes $i$ and $j$, such that $\|x_i - x_j\| \leq r/2$, every vertex in $V(G^*)$ corresponding to an element in $\tilde{S}_i$ is connected to every vertex in $V(G^*)$ corresponding to an element in $\tilde{S}_j$.

Our next claim establishes some properties of the graph $G^*$. For its proof, we refer to Appendix~\ref{App:Gstar}.

\begin{claim}
\label{claim:Gstar}
With high probability, the graph $G^*$ has the following properties.
\begin{itemize}
\item[$(i)$] The degree of each node is bounded by $C (nr^d)^2$, for some constant $C = C(d)$.
 
\item[$(ii)$] Let $\mathcal{L}^*$ denote the Laplacian of $G^*$. Then $\sigma_{\min}(\mathcal{L}^*) \geq C(nr^d)^2 r^2$, for some constant $C$.
\end{itemize}
\end{claim}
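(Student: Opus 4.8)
The plan is to prove Claim~\ref{claim:Gstar} by translating each of the two assertions into statements about the ``clique graph'' $G^*$ that can be controlled via the combinatorial structure of $G(n,r)$ together with Corollary~\ref{cor:deg_GNR}, Proposition~\ref{pro:sampling_lemma}, and Claim~\ref{claim:concentration_points}. The starting point is the observation that $G^*$ is, up to blow-up, the graph $\hat G$ on vertex set $V(G)$ in which $i\sim j$ iff $\|x_i-x_j\|\le r/2$: each vertex $i$ of $\hat G$ is replaced by a group of $m+1$ copies (the elements of $\tilde S_i$), and edges of $\hat G$ become complete bipartite graphs between groups. Since $\hat G$ is itself a random geometric graph $G(n,r/2)$ with $r/2\ge \alpha'(\log n/n)^{1/d}$ still above the threshold (possibly after adjusting constants; here $\alpha\ge 10\sqrt d$ guarantees room), Corollary~\ref{cor:deg_GNR} applies to it and tells us the degree of every node in $\hat G$ lies in $[\tfrac12 K_d n(r/2)^d,\tfrac32 K_d n(r/2)^d]$, i.e. is $\Theta(nr^d)$.

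For part $(i)$, a vertex of $G^*$ sitting in group $i$ is adjacent to all vertices in group $j$ for every $\hat G$-neighbor $j$ of $i$; there are $O(nr^d)$ such neighbors $j$, and each group has $m+1$ vertices. By Claim~\ref{claim:concentration_points} (and Corollary~\ref{cor:deg_GNR}) we have $m\le |\C_i|\le C nr^d$ uniformly in $i$, so each group has $O(nr^d)$ vertices and the degree in $G^*$ is $O(nr^d)\cdot O(nr^d)=O((nr^d)^2)$, which is part $(i)$.

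For part $(ii)$, the key point is a spectral comparison between the Laplacian of the blown-up graph $G^*$ and that of $\hat G=G(n,r/2)$. First I would record the standard fact that replacing a vertex by $t$ twins joined to the same neighborhoods (a ``lexicographic''/blow-up operation by the empty graph on $t$ vertices, with uniform multiplicity $t=m+1$) scales the nonzero spectrum of the Laplacian in a controlled way: every nonzero eigenvalue of $\mathcal L(\hat G)$ is multiplied by $t$ to give an eigenvalue of $\mathcal L(G^*)$, and the extra eigenvalues produced are either $0$ (corresponding to the within-group kernel directions) or of size exactly $t\cdot(\text{group degree})$, hence large; the only possibly-small nonzero eigenvalues of $\mathcal L(G^*)$ are thus $t$ times the small nonzero eigenvalues of $\mathcal L(\hat G)$. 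Concretely, $\sigma_{\min}(\mathcal L(G^*)) = (m+1)\,\sigma_{\min}(\mathcal L(\hat G))\ge \sigma_{\min}(\mathcal L(\hat G))$. Then I apply Remark~\ref{rem:spec_GNR} to $\hat G=G(n,r/2)$: since $\lambda_2(\mathcal L(G(n,\rho)))\ge C(n\rho^d)\rho^2$, taking $\rho=r/2$ gives $\sigma_{\min}(\mathcal L(\hat G))\ge C(nr^d)r^2$ (absorbing the factors $2^{-d}$, $2^{-2}$ into the dimension-dependent constant $C$). Multiplying by $m+1\ge \Theta(nr^d)$ — or at the very least $m+1\ge 1$ — yields $\sigma_{\min}(\mathcal L^*)\ge C(nr^d)^2 r^2$; note that the stated bound $C(nr^d)^2r^2$ actually requires the factor $m+1 = \Theta(nr^d)$, which is supplied by the lower bound $m\ge |\tilde\C_i| = \Theta(nr^d)$ from Claim~\ref{claim:concentration_points} combined with the sampling lemma, so I would invoke that lower bound explicitly.

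The main obstacle I anticipate is making the blow-up spectral comparison fully rigorous and accounting honestly for the powers of $nr^d$: one must check carefully that the multiplicities are \emph{uniform} (every group has exactly $m+1$ vertices — this is exactly why Claim~\ref{claim:concentration_points} was set up to give a single common $m$), since non-uniform blow-ups do not simply scale the spectrum. A secondary technical point is verifying that $\hat G$ genuinely is (distributed as) $G(n,r/2)$ with connectivity radius still comfortably above threshold so that Corollary~\ref{cor:deg_GNR} and Remark~\ref{rem:spec_GNR} apply with the same $n\to\infty$, $d$-fixed w.h.p.\ guarantees; this is where the hypothesis $\alpha\ge 10\sqrt d$ (rather than the bare connectivity constant) buys the needed slack. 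Everything else — the degree count in $(i)$, the union bounds over the $n$ groups — is routine given the earlier lemmas.
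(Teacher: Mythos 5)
Your proposal is correct and is essentially the paper's own argument: part $(i)$ is the identical degree count coming from the uniform blow-up structure (the paper records it as $A_{G^*}=A_{\tilde G}\otimes B$ with $B$ the all-ones $(m+1)\times(m+1)$ matrix, $\tilde G=G(n,r/2)$), and part $(ii)$ likewise rests on the uniform multiplicity $m+1$, Remark~\ref{rem:spec_GNR} applied at radius $r/2$, and the lower bound $m+1\ge C\,nr^d$ from Claim~\ref{claim:concentration_points} together with the sampling estimate. The only real difference is how the spectral information is transferred in $(ii)$: the paper writes the \emph{normalized} adjacency of $G^*$ as a Kronecker product, gets $\sigma_{\min}(\mathcal{L}_{n,G^*})\ge Cr^2$, and then passes to the unnormalized Laplacian through the min-degree comparison $\sigma_{\min}(\mathcal{L}_{G^*})\ge d_{\min,G^*}\,\sigma_{\min}(\mathcal{L}_{n,G^*})$ (Theorem~4 of the Butler reference), with $d_{\min,G^*}=(m+1)d_{\min,\tilde G}\ge C(nr^d)^2$; you instead decompose the unnormalized Laplacian directly, $\mathcal{L}_{G^*}=(m+1)\,\mathcal{L}_{\tilde G}\otimes P_u+(m+1)\,D_{\tilde G}\otimes(I-P_u)$, whose spectrum is $\{(m+1)\lambda_i(\mathcal{L}_{\tilde G})\}$ together with $(m+1)\deg_{\tilde G}(j)$ each of multiplicity $m$. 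This avoids the external normalized-vs-unnormalized comparison lemma, at the cost of handling the extra eigenvalues yourself; note they are \emph{not} zeros (the within-group directions give $(m+1)\deg_{\tilde G}(j)$), so your claimed identity $\sigma_{\min}(\mathcal{L}_{G^*})=(m+1)\sigma_{\min}(\mathcal{L}_{\tilde G})$ should be stated as $\sigma_{\min}(\mathcal{L}_{G^*})\ge\min\{(m+1)\lambda_2(\mathcal{L}_{\tilde G}),\,(m+1)d_{\min}(\tilde G)\}$; since $d_{\min}(\tilde G)\ge C\,nr^d$ and $r^2\le d$, the second term is also $\ge C(nr^d)^2r^2$, so the stated bound follows either way. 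Your attention to the uniformity of the group sizes and to the explicit lower bound on $m$ matches exactly the role Claim~\ref{claim:concentration_points} and Corollary~\ref{cor:deg_GNR} play in the paper's proof.
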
 

Now, we are in position to prove Theorem~\ref{thm:OmegaL}

\begin{proof}[Proof (Theorem~\ref{thm:OmegaL})]
Let $v \in V^{\perp}$ be an arbitrary vector. For every clique $\Q_i \in \cliq(G)$, decompose $v$ locally as $v_{\Q_i} = \sum_{\ell =1}^{d} \beta_i^{(\ell)} \tilde{x}_{\Q_i}^{(\ell)} + \gamma_i u^{}_{\Q_i} + w^{(i)}$, where $\tilde{x}_{\Q_i}^{(\ell)} = P_{u^{}_{\Q_i}}^{\perp} x_{\Q_i}^{(\ell)}$ and $w^{(i)} \in \langle x_{\Q_i}^{(1)}, \cdots, x_{\Q_i}^{(d)}, u_{\Q_i}\rangle ^{\perp}$. Hence, 

\begin{equation*}
v^T \Omega v = \sum_{\Q_i \in \cliq(G)} \|w^{(i)}\|^2.
\end{equation*}

\noindent Note that $v_{\Q_i \cap \Q_j}$ has two representations; One is obtained by restricting $v_{\Q_i}$ to indices in $\Q_j$, and the other is obtained by restricting $v_{\Q_j}$ to indices in $\Q_i$. From these two representations, we get 
\begin{equation}
\label{eq:representation}
w^{(i)}_{\Q_i \cap \Q_j} - w^{(j)}_{\Q_i \cap \Q_j} = 
\sum_{\ell=1}^{d} (\beta_j^{(\ell)} -\beta_i^{(\ell)}) \tilde{x}_{\Q_i \cap \Q_j}^{(\ell)} + \tilde{\gamma}_{i,j}  u_{\Q_i \cap \Q_j}.
\end{equation}
Here, $\tilde{x}_{\Q_i \cap \Q_j}^{(\ell)} = P_{u^{}_{\Q_i \cap \Q_j}}^{\perp} x_{\Q_i \cap \Q_j}^{(\ell)}$. The value of $\gamma_{i,j}$ does not matter to our argument; however it can be given explicitly. 


Note that $\{\C_1,\cdots,\C_n\} \subseteq \cliq^*(G) \subseteq \cliq(G)$. Invoking Claim~\ref{claim:Laplacian}, 
\begin{align*}
v^T \mathcal{L} v \leq C \sum_{k=1}^n \|P_{u^{}_{\mathcal{C}_k}}^{\perp} v^{}_{\mathcal{C}_k}\| ^2 &\leq C \sum_{\Q_i \in \cliq^*(G)} \|P_{u^{}_{\Q_i}}^{\perp} v^{}_{\Q_i}\|^2
= C \sum_{\Q_i \in \cliq^*(G)} \Big\|\sum_{\ell=1}^{d} \beta_i^{(\ell)} \tilde{x}_{\Q_i}^{(\ell)} + w^{(i)}\Big\|^2\\ 
&= C\left( \sum_{\Q_i \in \cliq^*(G)} \Big\| \sum_{\ell=1}^{d} \beta_i^{(\ell)} \tilde{x}_{\Q_i}^{(\ell)} \Big\|^2 + \sum_{\Q_i \in \cliq^*(G)} \| w ^{(i)} \|^2 \right)\\
&\leq C\left( d \sum_{\Q_i \in \cliq^*(G)} \sum_{\ell=1}^{d} \Big\| \beta_i^{(\ell)}\tilde{x}_{\Q_i}^{(\ell)} \Big\|^2 + \sum_{\Q_i \in \cliq^*(G)} \| w ^{(i)} \|^2 \right).
\end{align*}
Hence, we only need to show that
\begin{equation}
\label{eq:aux1}
\sum_{\Q_i \in \cliq(G)} \|w^{(i)}\|^2 \geq 
C(nr^d)^{-3} r^2 \sum_{\Q_i \in \cliq^*(G)} \sum_{\ell=1}^{d} \Big\| \beta_i^{(\ell)}\tilde{x}_{\Q_i}^{(\ell)} \Big\|^2,
\end{equation}
for some constant $C = C(d)$.

In the following we adopt the convention that for $j \in V(G^*)$, $\Q_j$ is the corresponding clique in $\cliq^*(G)$. We have

\begin{align}
\sum_{\Q_i \in \cliq(G)} \|w^{(i)}\|^2 &\geq \sum_{\Q_i \in \cliq^*(G)} \|w^{(i)}\|^2  = 
\sum_{i \in V(G^*)} \|w^{(i)}\|^2 \nonumber\\
&\stackrel{(a)} \geq C(nr^d)^{-2} \sum_{(i,j)\in E(G^*)}(\|w^{(i)}\|^2 + \|w^{(j)}\|^2) \nonumber\\
&\geq C(nr^d)^{-2} \sum_{(i,j)\in E(G^*)}\|w^{(i)}_{\Q_i \cap \Q_j} - w^{(j)}_{\Q_i \cap \Q_j}\|^2 \nonumber\\
& \stackrel{(b)}\geq C(nr^d)^{-2} \sum_{(i,j)\in E(G^*)} \Big\| \sum_{\ell=1}^{d} (\beta_j^{(\ell)} -\beta_i^{(\ell)}) \tilde{x}_{\Q_i \cap \Q_j}^{(\ell)} \Big\|^2 \nonumber\\
&\stackrel{(c)} \geq C(nr^d)^{-1} r^2 \sum_{(i,j)\in E(G^*)} \sum_{\ell=1}^{d} (\beta_j^{(\ell)} -\beta_i^{(\ell)})^2. \label{eq:LB_stress}
\end{align}
Here, $(a)$ follows from the fact that the degrees of nodes in $G^*$ are bounded by $C(nr^d)^2$ (Claim~\ref{claim:Gstar}, part $(i)$); $(b)$ follows from Eq.~\eqref{eq:representation} and $(c)$ follows from Claim~\ref{claim:Azuma_bound}, whose proof is deferred to Appendix~\ref{App:Azuma_bound}.
\begin{claim}
\label{claim:Azuma_bound}
There exists a constant $C = C(d)$, such that, for any set of values $\{\beta_i^{(\ell)}\}$ the following holds with high probability.
\begin{equation*}
\Big\| \sum_{\ell=1}^{d} (\beta_j^{(\ell)} -\beta_i^{(\ell)}) \tilde{x}_{\Q_i \cap \Q_j}^{(\ell)} \Big\|^2
\geq  
C (nr^d) r^2 \sum_{\ell=1}^{d} (\beta_j^{(\ell)} -\beta_i^{(\ell)})^2, \quad \forall (i,j) \in E(G^*).
\end{equation*}
\end{claim}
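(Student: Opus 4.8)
The plan is to read the left-hand side as a quadratic form in the vector $\delta:=(\beta_j^{(1)}-\beta_i^{(1)},\dots,\beta_j^{(d)}-\beta_i^{(d)})\in\reals^d$ and to reduce the claim to a lower bound on the smallest eigenvalue of a scatter matrix. Put $T:=\Q_i\cap\Q_j$ and let $M=M(T)$ be the $d\times d$ matrix with entries $M_{\ell\ell'}=\langle\tilde{x}_{T}^{(\ell)},\tilde{x}_{T}^{(\ell')}\rangle$; since $\tilde{x}_T^{(\ell)}=P_{u_T}^{\perp}x_T^{(\ell)}$ we have $M=\sum_{p\in T}(x_p-\bar{x}_T)(x_p-\bar{x}_T)^T$ with $\bar{x}_T=|T|^{-1}\sum_{q\in T}x_q$, and the left-hand side equals $\delta^T M\delta=\min_{c\in\reals}\sum_{p\in T}(\langle\delta,x_p\rangle-c)^2$. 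Hence it suffices to prove that, w.h.p.\ and uniformly over all edges $(i,j)$ of $G^*$,
\begin{equation*}
\sigma_{\min}(M)\ \ge\ C(d)\,(nr^d)\,r^2 .
\end{equation*}

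First I would trade the random, mildly irregular index set $T$ for a fixed geometric region. By construction $\Q_i\in\tilde{S}_a$, $\Q_j\in\tilde{S}_b$ with $\|x_a-x_b\|\le r/2$, and every element of $\tilde{S}_a$ is $\C_a$ with at most one point removed; therefore $T\supseteq N\setminus F$, where $N:=\{p\in V(G)\setminus\{a,b\}:x_p\in\mathcal R\}$, $\mathcal R:=B(x_a,r/2)\cap B(x_b,r/2)\cap[-\tfrac12,\tfrac12]^d$, and $|F|\le 2$. From the $\min_c$ representation, $\delta\mapsto\delta^T M(\cdot)\delta$ is nondecreasing in the index set, and removing one point decreases it by at most $(\mathrm{diam}\,\mathcal R)^2\|\delta\|^2\le r^2\|\delta\|^2$ (the minimizing $c$ lies in an interval of length at most $r\|\delta\|$). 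Consequently $M(T)\succeq M(N)-2r^2 I$, and it is enough to show $\sigma_{\min}(M(N))\ge C(d)(nr^d)r^2$. The gain of the detour is that, conditional on $x_a,x_b$ and on the index set $N$, the positions $\{x_p\}_{p\in N}$ are i.i.d.\ uniform on $\mathcal R$, which is what makes concentration applicable.

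For this reduced statement I would establish two facts, both uniform over $(a,b)$. Since $\|x_a-x_b\|\le r/2$, the ball of radius $r/4$ about $z:=\tfrac12(x_a+x_b)$ lies in $B(x_a,r/2)\cap B(x_b,r/2)$, and intersecting with the hypercube still leaves an inscribed ball $B$ of radius $\rho\ge c(d)\,r$; with $\mathcal R\subseteq B(x_a,r/2)$ this makes $V(\mathcal R)$ of order $r^d$ up to $d$-dependent constants, so Remark~\ref{rem:region} and a union bound over $a,b$ give $|N|$ of order $nr^d$ w.h.p. For the spectral estimate, for any unit $\delta$ the inscribed ball gives $\mathrm{Var}_{\mathcal R}(\langle\delta,U\rangle)\ge\frac{V(B)}{V(\mathcal R)}\cdot\frac{\rho^2}{d+2}\ge c'(d)\,r^2$ (with $U$ uniform on $\mathcal R$), hence $\E[\delta^T M(N)\delta\mid N]=(|N|-1)\,\mathrm{Var}_{\mathcal R}(\langle\delta,U\rangle)\ge c''(d)\,(nr^d)\,r^2$. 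The function $\{x_p\}_{p\in N}\mapsto\delta^T M(N)\delta$ has bounded differences of order $r^2$ (again by the $\min_c$ form), so the Azuma--McDiarmid inequality yields
\begin{equation*}
\P\!\left(\delta^T M(N)\delta<\tfrac12\,\E[\delta^T M(N)\delta\mid N]\ \Big|\ N\right)\ \le\ 2\exp\!\left(-c(d)\,nr^d\right).
\end{equation*}

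It remains to upgrade from a single direction to all of $S^{d-1}$ and from one pair to all pairs. Deterministically $\|M(N)\|_2\le\Tr M(N)\le|N|\,r^2$, so on the event that $|N|$ is of order $nr^d$ the map $\delta\mapsto\delta^T M(N)\delta$ is Lipschitz with a constant of order $(nr^d)r^2$; controlling it on a net of $S^{d-1}$ of mesh a small $d$-dependent constant (hence of cardinality depending only on $d$) then controls it on all of $S^{d-1}$ up to a factor $\tfrac12$, so a union bound over the net keeps the failure probability at $\exp(-c(d)nr^d)$. A further union bound over the at most $|V(G^*)|^2=(n(m+1))^2=\mathrm{poly}(n)$ edges of $G^*$ leaves the total failure probability at $\mathrm{poly}(n)\cdot\exp(-c(d)nr^d)$, which is $o(1)$ since $nr^d\ge(10\sqrt d)^d\log n$ with $\alpha=10\sqrt d$ large; combined with $M(T)\succeq M(N)-2r^2 I$ and $nr^d\to\infty$ this yields the claim with $\delta=\beta_j-\beta_i$. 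I expect the main obstacle to be the second step: because $T$ is random and the nearest-neighbour deletions defining the cliques in $\tilde{S}_a$ couple $T$ to the point positions, one must first route through the fixed region $\mathcal R$ (via the monotonicity-plus-perturbation bound) so that the i.i.d.\ conditioning underlying the Azuma estimate is legitimate; tracking the $d$-dependent constants so that the $\exp(-c(d)nr^d)$ tail still dominates the polynomial union bound is the other point requiring care.
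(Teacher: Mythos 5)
Your proposal is correct and follows essentially the same route as the paper's proof: reduce the left-hand side to $\delta^T M^{(ij)}\delta$ for the local scatter matrix of $\tilde{x}_{\Q_i\cap\Q_j}^{(\ell)}$, lower bound the expectation of this matrix by $c(d)(nr^d)r^2$ using the geometry of the lens $B(x_a,r/2)\cap B(x_b,r/2)$, and then apply a Hoeffding/Azuma-type concentration bound followed by a union bound over the polynomially many pairs, using $nr^d\gtrsim\log n$ to kill the union bound. Your execution differs only in technical refinements (conditioning on the index set to make the i.i.d. structure legitimate, the monotonicity-plus-removal argument for the random set $\Q_i\cap\Q_j$, and McDiarmid with an $\varepsilon$-net over directions instead of the paper's entrywise Hoeffding with $\sigma_{\max}(A)\le d\max_{p,q}|A_{pq}|$), which if anything treat more carefully the points the paper glosses over.
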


Also note that $\|\txl_{\Q_i}\|^2 \le |\Q_i| r^2$ and w.h.p., $|\Q_i| \le C(nr^d)$, for all $i \in V(G^*)$ (since, w.h.p., $|\C_i| \le C(nr^d)$ for all $i \in V(G)$ by Corollary~\ref{cor:deg_GNR} ). Therefore, using Eq.~\eqref{eq:LB_stress}, in order to prove~\eqref{eq:aux1} it suffices to show that
\begin{equation*}
\sum_{\ell=1}^{d} \sum_{(i,j)\in E(G^*)}(\beta_j^{(\ell)} - \beta_i^{(\ell)})^2 \geq
C(nr^d)^{-1} r^2 \sum_{\ell=1}^{d} \sum_{i \in V(G^*)} (\beta_i^{(\ell)})^2.
\end{equation*} 
Define $\beta^{(\ell)} = (\beta_i^{(\ell)})_{i \in V(G^*)}$. Observe that,
\begin{equation*}
\sum_{(i,j)\in E(G^*)}(\beta_j^{(\ell)} - \beta_i^{(\ell)})^2 = (\beta^{(\ell)})^T \mathcal{L}^* \beta^{(\ell)} \geq \sigma_{\min}(\mathcal{L}^*) \|P^{\perp}_u \beta^{(\ell)}\|^2.
\end{equation*}
Using Claim~\ref{claim:Gstar} (part $(ii)$) we obtain
\begin{equation*}
\sum_{(i,j)\in E(G^*)}(\beta_j^{(\ell)} - \beta_i^{(\ell)})^2
\geq C(nr^d)^2r^2  \|P^{\perp}_u \beta^{(\ell)}\|^2.
\end{equation*}
The proof is completed by the following claim, whose proof is given in Appendix~\ref{App:beta_mean}.
 \end{proof}
\begin{claim}
\label{claim:beta_mean}
There exists a constant $C =C(d)$, such that, the following holds with high probability. Consider an arbitrary vector $v \in V^{\perp}$ with local decompositions $v_{\Q_i} = \sum_{\ell=1}^{d} \beta^{(\ell)}_i \tilde{x}^{(\ell)}_{\Q_i} + \gamma_i u_{\Q_i} + w^{(i)}$. Then,
\begin{equation*}
\sum_{\ell=1}^{d} \|P_u^{\perp} \beta^{(\ell)}\|^2 \geq C(nr^d)^{-3} \sum_{\ell=1}^{d} \|\beta^{(\ell)}\|^2.
\end{equation*}
\end{claim}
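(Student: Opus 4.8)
The plan is to show that membership of $v$ in $V^{\perp}$ forces the average of each local–gradient vector $\bl$ to be small compared with its fluctuation about that average. Write $\bar\beta^{(\ell)}=\tfrac1N\langle\bl,u\rangle$ with $N=|V(G^*)|=n(m+1)$; since $\|\bl\|^2=\|P^{\perp}_u\bl\|^2+N(\bar\beta^{(\ell)})^2$, taking the constant small the statement of Claim~\ref{claim:beta_mean} is equivalent to
\[
N\sum_{\ell=1}^{d}(\bar\beta^{(\ell)})^2\ \le\ C'(nr^d)^3\sum_{\ell=1}^{d}\|P^{\perp}_u\bl\|^2 ,
\]
for some $C'=C'(d)$, so the whole task is to bound $\sum_{i\in V(G^*)}\beta^{(\ell)}_i=N\bar\beta^{(\ell)}$.

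The backbone is a family of exact local identities. Because $w^{(i)}\perp\langle\txl[1]_{\Q_i},\dots,\txl[d]_{\Q_i},u_{\Q_i}\rangle$ with $\txl_{\Q_i}=P^{\perp}_{u_{\Q_i}}\xl_{\Q_i}$, pairing the decomposition $v_{\Q_i}=\sum_{\ell}\beta^{(\ell)}_i\txl_{\Q_i}+\gamma_i u_{\Q_i}+w^{(i)}$ with $\xl_{\Q_i}$ and with $u_{\Q_i}$ gives, writing $A_i:=[\langle\txl_{\Q_i},\txl[\ell']_{\Q_i}\rangle]_{\ell,\ell'}$,
\[
\langle \xl_{\Q_i},v_{\Q_i}\rangle=(A_i\beta_i)_\ell+|\Q_i|\,\bar x^{(\ell)}_{\Q_i}\,\gamma_i,\qquad
\langle u_{\Q_i},v_{\Q_i}\rangle=|\Q_i|\,\gamma_i .
\]
Summing the first over $i\in V(G^*)$ and exchanging the order of summation, its left side becomes $\sum_p x^{(\ell)}_p v_p\,m_p$ with $m_p:=|\{i:p\in\Q_i\}|$ and the $\gamma_i$–term becomes $\sum_p x^{(\ell)}_p\big(\sum_{i:p\in\Q_i}\gamma_i\big)$; the $w^{(i)}$'s cancel identically. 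It is convenient to introduce the symmetric positive semidefinite matrix $M:=\sum_{\Q_i\in\cliq^*(G)}\hat P_i$, where $\hat P_i$ is the $n\times n$ matrix obtained from $P^{\perp}_{u_{\Q_i}}$ by padding with zeros (as with $\hat\Omega_k$): then $Mu=0$, $v^{T}Mv=\sum_i\|P^{\perp}_{u_{\Q_i}}v_{\Q_i}\|^2$, $(x^{(\ell)})^{T}Mx^{(\ell')}=\sum_i(A_i)_{\ell\ell'}$, and the summed identity reads $\sum_{i\in V(G^*)}(A_i\beta_i)_\ell=\langle Mx^{(\ell)},v\rangle$. One then feeds in the geometry, all of it available from Remark~\ref{rem:region}, Corollary~\ref{cor:deg_GNR} and Claims~\ref{claim:concentration_points}--\ref{claim:Azuma_bound}: w.h.p.\ $|\Q_i|\asymp nr^d$ and $A_i=\bar a_iI+E_i$ with $\bar a_i\asymp (nr^d)r^2$, $\sigma_{\min}(A_i)\gtrsim (nr^d)r^2$, $\|E_i\|_2\lesssim\sqrt{nr^d}\,r^2\sqrt{\log n}$ uniformly in $i$; and $Mx^{(\ell)}$, whose $p$-th entry is $\sum_{i:p\in\Q_i}(x^{(\ell)}_p-\bar x^{(\ell)}_{\Q_i})$, is (up to lower–order fluctuations) supported within distance $O(r)$ of the two faces $x^{(\ell)}=\pm\tfrac12$ of the cube, so both $\|Mx^{(\ell)}\|$ and its component orthogonal to $V$ are under control.

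With this in hand I would split $\beta_i=\bar\beta+(\beta_i-\bar\beta)$ in $\sum_i(A_i\beta_i)_\ell$, separating a ``mean'' contribution $(x^{(\ell)})^{T}Mb$, where $b:=\sum_{\ell'}\bar\beta^{(\ell')}x^{(\ell')}$, from a ``fluctuation'' contribution of size at most $C(nr^d)r^2\big(N\sum_{\ell'}\|P^{\perp}_u\beta^{(\ell')}\|^2\big)^{1/2}$ (using $\|A_i\|_2\lesssim(nr^d)r^2$ and Cauchy--Schwarz over the $N$ terms). On the other side, $b^{T}Mb=\sum_i\bar\beta^{T}A_i\bar\beta\gtrsim N(nr^d)r^2\|\bar\beta\|^2$, while $b^{T}Mv=\langle Mb,v\rangle=\sum_{\ell'}\bar\beta^{(\ell')}\langle Mx^{(\ell')},v\rangle$ can be re-expressed through the same local least–squares quantities, with the $w^{(i)}$'s again dropping out because $w^{(i)}\perp \xl_{\Q_i}$. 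Combining the summed identity $\langle Mx^{(\ell)},v\rangle=\sum_i(A_i\beta_i)_\ell$ (whose right side depends on $v$ only via the $\beta_i$'s) with $v\in V^{\perp}$ (which lets one replace $Mx^{(\ell)}$ by its projection off $V$ when paired with $v$) and with the geometric bounds on $Mx^{(\ell)}$, $m_p$, $\bar x^{(\ell)}_{\Q_i}$, $|\Q_i|$ and $A_i$, one should arrive at $N\|\bar\beta\|^2\le C'(nr^d)^3\sum_{\ell'}\|P^{\perp}_u\beta^{(\ell')}\|^2$, i.e.\ the displayed inequality; the powers of $nr^d$ enter through the size/conditioning estimates on the $A_i$ and through passing from $\sum_i\|\beta_i-\bar\beta\|$ to $\sqrt{N\sum_i\|\beta_i-\bar\beta\|^2}$, together with the boundary error in $Mx^{(\ell)}$ and in $m_p$.

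I expect this last combination to be the main obstacle. The cheap estimate $|\langle Mx^{(\ell)},v\rangle|\le\|Mx^{(\ell)}\|\,\|v\|$ is useless — for a ``high–frequency'' $v$ the norm $\|v\|$ bears no relation to $\sum_\ell\|\bl\|^2$ (e.g.\ the $w^{(i)}$ part of $v$ is invisible to the $\beta_i$'s) — so one must genuinely use that $\langle Mx^{(\ell)},v\rangle=\sum_i(A_i\beta_i)_\ell$ is a functional of the $\beta_i$'s alone, and carefully disentangle the nearly cancelling ``main'' terms from the genuinely $\beta$–dependent ``fluctuation'' terms while keeping explicit track of the $d$–dependent constants (in particular of $\sigma_{\min}(A_i)$ versus $\|A_i\|_2$, and of the cancellation in $Mx^{(\ell)}$ away from the boundary). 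It is precisely in this bookkeeping, and in controlling the constant modes $\gamma_i$, that the hypothesis $v\in V^{\perp}$ is used essentially and that the (admittedly lossy) factor $(nr^d)^3$ is incurred; the underlying concentration estimates themselves are routine given the Sampling Lemma and the Azuma–type bounds already proved en route to Theorem~\ref{thm:OmegaL}.
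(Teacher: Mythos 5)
Your reduction (it suffices to bound $N\|\bar{\beta}\|^2$ by $C(nr^d)^3\sum_{\ell}\|P_u^{\perp}\beta^{(\\ell)}\|^2$) and your local identities are sound and indeed coincide with the paper's starting point: pairing the decomposition with $\tilde{x}^{(\ell)}_{\Q_i}$ gives $(A_i\beta_i)_\ell=\langle \tilde{x}^{(\ell)}_{\Q_i},v_{\Q_i}\rangle$, with $A_i$ the local Gram matrix (the paper's $M^{(i)}$). But the step you yourself flag as ``the main obstacle'' is a genuine gap, and the mean/fluctuation split you propose cannot close it as stated: the term $\langle Mb,v\rangle$ (with $M=\sum_i\hat P_i$, $b=\sum_{\ell'}\bar{\beta}^{(\ell')}x^{(\ell')}$), when ``re-expressed through the same local quantities,'' is exactly $\sum_i\bar{\beta}^TA_i\beta_i=b^TMb+\sum_i\bar{\beta}^TA_i\hat{\beta}_i$, so your equation $b^TMb=\langle Mb,v\rangle-\sum_i\bar{\beta}^TA_i\hat{\beta}_i$ is a tautology and yields no inequality. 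What is missing is a coercivity estimate that converts smallness of the local measurements into smallness of a global object, which is where the hypothesis $v\in V^{\perp}$ actually gets leveraged; the boundary-support heuristic for $Mx^{(\ell)}$ and the bounds on $m_p$, $|\Q_i|$, $\sigma_{\min}(A_i)$ do not supply it.

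The paper closes the loop differently. It sets $\tilde v=v-\sum_{\ell}\bar{\beta}^{(\ell)}x^{(\ell)}$, so that the local coefficients of $\tilde v$ are the fluctuations $\hat{\beta}_i$ and $B^{(\ell)}\tilde v=\eta^{(\ell)}$ with $\eta^{(\ell)}_i=(M^{(i)}\hat{\beta}_i)_\ell$, where $B^{(\ell)}\in\reals^{N\times n}$ stacks the vectors $\tilde{x}^{(\ell)}_{\Q_i}$. The key new ingredient is Lemma~\ref{lem:comp_B_L}: $B^{(\ell)}(B^{(\ell)})^T\succeq C(nr^d)^{-1}r^2\tilde{\L}$, proved by an explicit construction (for each edge $(i,j)$ of $\tilde G$ pick a common neighbor $k$ and weights $\xi^{(ij)}$ on the three cliques $\C_k$, $\C_k\backslash i$, $\C_k\backslash j$ with $(B^{(\ell)})^T\xi^{(ij)}=e_{ij}$ and $\|\xi^{(ij)}\|\le C/r$). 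Combined with $\sigma_{\min}(\tilde{\L})\gtrsim(nr^d)r^2$ this gives $\|\eta^{(\ell)}\|^2\ge Cr^4\|\tilde v\|^2$, i.e.\ small fluctuations force $\tilde v$ itself to be small. Then $v\perp x^{(\ell)}$ gives $\langle\tilde v,x^{(\ell)}\rangle=-(M\bar{\beta})_\ell$ for the global $d\times d$ Gram matrix $M$, and Cauchy--Schwarz with $\|x^{(\ell)}\|^2\le Cn$, $\sigma_{\min}(M)\gtrsim n$, $\sigma_{\max}(M^{(i)})\lesssim(nr^d)r^2$ yields $N\|\bar{\beta}\|^2\le C(nr^d)^3\sum_i\|\hat{\beta}_i\|^2$. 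Without an analogue of Lemma~\ref{lem:comp_B_L} (or some other quantitative way to dominate $\|\tilde v\|$, and hence $\|M\bar\beta\|$, by the fluctuations), your outline does not constitute a proof.
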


%
%
\section{Proof of Lemma~\ref{lem:bound on Rtilde}}
\label{sec:bound on Rtilde}
Recall that $\tilde{R} = P_VRP_V + P_V R P_V^{\perp} + P_V^{\perp} R P_V$, and $V = \langle x^{(1)},\cdots,x^{(d)}, u\rangle$. Therefore, there exist a matrix $Y \in \mathbb{R}^{n \times d}$ and a vector $a \in \mathbb{R}^n$ such that $\tilde{R} = XY^T + YX^T + u a^T + a u^T$. We can further assume that $Y^Tu = 0$. Otherwise, define $\tilde{Y} = Y - u (u^T Y / \|u\|^2)$ and $\tilde{a} = a + X(Y^T u / \|u\|^2)$. Then $\tR = X\tilde{Y}^T + \tilde{Y} X^T + u \tilde{a}^T + \tilde{a} u^T$, and $\tilde{Y}^T u = 0$. 

Also note that, $u^TQu = u^T \tR u = 2 (a^Tu) \|u\|^2$. Hence, $a^Tu = 0$, since $Qu = 0$. In addition, $Qu = \tR u =a \|u\|^2$, which implies that $a = 0$. Therefore, $\tR = XY^T + YX^T$ where $Y^Tu = 0$. Denote by $y_i^T \in \mathbb{R}^d$, $i\in [n]$, the $i^{th}$ row of the matrix $Y$.

Define the operator $\cR_{G,X} : \reals^{n \times d} \to \reals^E$ as $\cR_{G,X}(Y) = R_G(X) \mathcal{Y}$, where $\mathcal{Y} = [y_1^T, \cdots,y_n^T]^T$ and $R_G(X)$ is the rigidity matrix of framework $G_X$. Observe that 
\begin{equation*}
\|\cR_{G,X}(Y)\|_1 = \sum_{(l,k) \in E(G)} |\langle x_l-x_k, y_l - y_k\rangle|.
\end{equation*}
 
The following theorem compares the operators $\cR_{G,X}$ and $\cR_{K_n,X}$, where $G = G(n,r)$ and $K_n$ is the complete graph with $n$ vertices. This theorem is the key ingredient in the proof of Lemma~\ref{lem:bound on Rtilde}.
\begin{thm}
\label{thm:cheeger}
There exists a constant $C=C(d)$, such that, w.h.p.,
\begin{equation*}
\|\cR_{K_n,X}(Y)\|_1 \le C r^{-d-2} \|\cR_{G,X}(Y)\|_1, \quad \text{for all }Y \in \reals^{n \times d}.
\end{equation*}
\end{thm}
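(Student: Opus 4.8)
The plan is to recast the inequality as a bound on the \emph{congestion} of a routing scheme in the row space of the rigidity matrix. Denote by $\delta_{lk}\in\reals^{dn}$ the row of $R_{K_n}(X)$ indexed by the pair $(l,k)$, i.e.\ the vector equal to $x_l-x_k$ in the $d$-block of coordinate $l$, to $x_k-x_l$ in the block of $k$, and zero elsewhere, so that $\langle \delta_{lk},\mathcal Y\rangle=\langle x_l-x_k,y_l-y_k\rangle$. In the stated range of $r$ the framework $G(n,r)_X$ is generically infinitesimally rigid (a consequence of the rank-$(n{-}d{-}1)$ stress matrix built in Proposition~\ref{pro:stress_construction} together with standard rigidity theory), so $\ker R_{G}(X)=\ker R_{K_n}(X)$ equals the space of trivial infinitesimal motions and the two matrices have the same row space. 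Hence every $\delta_{lk}$ admits a representation $\delta_{lk}=\sum_{(a,b)\in E(G)}c^{(lk)}_{ab}\,\delta_{ab}$, which upon pairing with $\mathcal Y$ and taking absolute values yields
\[
\|\cR_{K_n,X}(Y)\|_1\ \le\ \Big(\max_{(a,b)\in E(G)}\ \sum_{l<k}\big|c^{(lk)}_{ab}\big|\Big)\ \|\cR_{G,X}(Y)\|_1 .
\]
It therefore suffices to exhibit, with high probability and uniformly over all pairs, representations whose congestion on each $G$-edge is at most $C r^{-d-2}$. Pairs at distance $\le r$ are $G$-edges and use $c^{(lk)}_{lk}=1$; all the content is in the far pairs.

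For a far pair $(l,k)$ with $D:=\|x_l-x_k\|$, I would route $\delta_{lk}$ through a tube of width $\asymp r$ around the segment $[x_l,x_k]$: cover the segment by a chain of balls $B_0\ni x_l,\dots,B_M\ni x_k$ of radius $\asymp r$ with $M\asymp D/r$, spaced so that any two points in the same ball or in consecutive balls are $G$-adjacent. The Sampling Lemma (Remark~\ref{rem:region}) guarantees each ball contains $\asymp nr^d$ nodes w.h.p., since $D\ge r$ makes the tube volume dominate $(\log n)/n$, and uniformity over all far pairs follows by a union bound as in Corollary~\ref{cor:deg_GNR}. The naive telescoping $\delta_{lk}=\sum_t\delta_{v_tv_{t+1}}$ \emph{fails}, because $\delta_{ab}$ encodes the positions $x_a,x_b$, not just the incidence $(a,b)$, and the intermediate blocks do not cancel. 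Instead one performs $M$ "elementary moves'': starting from $\delta_{lk}$, one repeatedly rewrites a vector supported (in the row space) on $B_t$ as a vector supported on $B_{t+1}$ plus a combination of the local edge vectors $\{\delta_{ab}:a,b\in B_t\cup B_{t+1}\}$. Because $\asymp nr^d$ generic points in a radius-$\asymp r$ ball form a well-conditioned local framework (the local rigidity operator has condition number $O_d(1)$ after the natural rescaling by $r$), each elementary move can be carried out with coefficients of total $\ell_1$-mass $\asymp D/r$, so the whole representation of $\delta_{lk}$ uses total coefficient mass $\asymp M\cdot(D/r)\asymp (D/r)^2$.

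The congestion bound then follows from a standard multicommodity-flow count. Summing the mass over all far pairs gives $\sum_{(l,k)}(D_{lk}/r)^2\asymp r^{-2}\sum_{l<k}\|x_l-x_k\|^2\asymp n^2/r^2$, using $\sum_{l<k}\|x_l-x_k\|^2 = n\sum_l\|x_l-\bar x\|^2\asymp n^2$ and noting the sum over dyadic scales $D\le\sqrt d$ is dominated by $D\asymp 1$ (the configuration has unit diameter). This total mass is distributed over the $\asymp n^2r^d$ edges of $G$; since the random geometric graph has no exceptional structure, the mass spreads essentially uniformly, and a routine count of the number of scale-$D$ tubes passing through a fixed $G$-edge confirms that the resulting load on each $G$-edge is $\asymp (n^2/r^2)/(n^2r^d)=r^{-d-2}$, as required.

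The step I expect to be the main obstacle is the "elementary move'' with well-controlled coefficients — equivalently, the fact that a single edge stress $\langle x_a-x_b,y_a-y_b\rangle$ does \emph{not} by itself determine the directional increment $\langle\hat e,y_a-y_b\rangle$ that one would like to telescope, the obstruction being exactly the infinitesimal rotations, which are invisible to all stresses. Resolving this requires a quantitative, \emph{global} rigidity input: performing a local affine regression $y_w\approx B_i x_w+c_i$ on each neighborhood $\C_i$, one shows that the $G$-edge stresses control both the local residuals and the variation of the symmetric parts $B_i^{\rm sym}$ across overlapping neighborhoods, so that the "aligned'' part of the routing is legitimate while the rotational ambiguity is absorbed into the stress-free trivial-motion component of $Y$. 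This is exactly the flavor of clique-overlap / comparison-graph argument carried out in the proof of Theorem~\ref{thm:OmegaL} (cf.\ Claim~\ref{claim:Azuma_bound} and the auxiliary graph $G^*$), and I would reuse that machinery; verifying that it inflates the local coefficients only by dimension-dependent constants — hence does not affect the final exponent — is the delicate point.
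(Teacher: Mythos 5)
Your overall architecture is the same as the paper's: both arguments express each complete-graph row $\delta_{lk}$ of the rigidity matrix as a combination $\sum_{(a,b)\in E(G)}c^{(lk)}_{ab}\,\delta_{ab}$ supported on a width-$\asymp r$ tube around the segment $[x_l,x_k]$, bound the coefficients by $C/r$ via the conditioning of a local rigidity operator, and finish with a congestion count yielding load $Cr^{-d-2}$ per $G$-edge (in the paper: the chains of perturbed hypercubes, Lemma~\ref{lem:cheeger_aux}, and Lemma~\ref{lem:congestion}). The genuine gap is exactly the step you flag as the main obstacle: the ``elementary move'' that transfers the constraint from one ball to the next despite the rotational ambiguity. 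This is the heart of the proof, and the fix you propose (local affine regression plus the clique-overlap machinery of Theorem~\ref{thm:OmegaL} and Claim~\ref{claim:Azuma_bound}) is not the right tool: that machinery controls a quadratic form (the stress matrix) and the variation of local regression coefficients, whereas here one must exhibit, for each far pair, an explicit coefficient vector $\lambda$ with $(R^{(H)})^T\lambda=u$ and $\|\lambda\|_\infty\le C/r$. The paper's resolution is elementary, local and deterministic: by the characterization of the range of the transposed rigidity matrix, the block system $\sum_k\lambda_{lk}(x_l-x_k)=u_l$ is solvable iff the prescribed forces have zero net force and zero net torque, Eq.~\eqref{eq:equilibrium}; Part~(i) of the proof of Lemma~\ref{lem:cheeger_aux} constructs face forces with $\sum_i f_i=x_m$, $\sum_i f_i\wedge x_i=0$ and $\sum_i\|f_i\|^2\le C\|x_m\|^2$ by a small least-squares problem, and Part~(ii) solves each block system with $\|\lambda^{(H_p)}\|_\infty\le\|u\|/\sigma_{\min}(R^{(H_p)})\le Cr^{-1}$, where $\sigma_{\min}(R^{(H_p)})\ge Cr$ holds deterministically because $H_p$ is an $O(r)$ perturbation of a rigid hypercube of side $\asymp r$. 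This is precisely why the paper routes through blocks of exactly $2^d$ nodes (one randomly chosen representative per bin) rather than through all $\asymp nr^d$ nodes of a ball: your claimed $O_d(1)$ condition number for the full local framework is neither proved nor available in the form you would need, and with many local edges the per-move mass and per-edge congestion accounting would have to be redone.

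A second, smaller gap: ``the mass spreads essentially uniformly'' is asserted rather than proved. In the paper this is handled by randomizing the chain selection and averaging: the per-pair inequality of Lemma~\ref{lem:cheeger_aux} holds for every realization of the chains, the left-hand side of the resulting bound does not depend on the realization, so the congestion $b(\gamma,(l,k))$ may be replaced by its expectation, which Lemma~\ref{lem:congestion} bounds by $Cr^{-d-1}$ through a symmetry count; multiplying by the coefficient bound $Cr^{-1}$ gives $Cr^{-d-2}$. Your average-load computation ($\asymp n^2/r^2$ total mass over $\asymp n^2r^d$ edges) recovers the correct exponent but does not by itself control the load on every edge; you would need either a high-probability uniform count over tubes through a fixed edge or the paper's randomize-and-average device to close this step.
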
 

Proof of Theorem~\ref{thm:cheeger} is discussed in next subsection. The next statement provides an upper bound on $\|\tilde{R}\|_1$. Its proof is immediate and discussed in Appendix~\ref{App:aux2}.

\begin{pro}
\label{pro:aux2}
Given $\tilde{R} = XY^T +YX^T$, with $Y^T u = 0$, we have
\begin{equation*}
\| \tilde{R} \|_1 \leq 5 \|\cR_{K_n,X}(Y)\|_1.
\end{equation*}
\end{pro}

Now we have in place all we need to prove lemma~~\ref{lem:bound on Rtilde}.
\begin{proof}[Proof (Lemma~\ref{lem:bound on Rtilde})]
Define the operator $\mathcal{A}_G : \mathbb{R}^{n \times n} \to \mathbb{R}^{E}$ as $\mathcal{A}_G(S) = [\langle M_{ij}, S\rangle]_{(i,j) \in E}$. By our assumptions,
\begin{align*}
&| \langle M_{ij},\tilde{R} \rangle + \langle M_{ij}, R^{\perp} \rangle |
= \left | \langle M_{ij},Q \rangle - \langle M_{ij}, Q_0 \rangle \right | \\ 
&\leq | \langle M_{ij},Q \rangle - \tilde{d}_{ij}^2 | +
\underbrace{ | \tilde{d}_{ij}^2 - \langle M_{ij}, Q_0 \rangle |} _{|z_{ij}|}
\leq 2 \Delta.
\end{align*}
Therefore,
$\| \mathcal{A}_G (\tilde{R})\|_1 \leq 2 |E| \Delta + \| \mathcal{A}_G (R^{\perp})\|_1$. Write the Laplacian matrix $\mathcal{L}$ as $\mathcal{L} = \sum_{(i,j) \in E} M_{ij}$. Then, $\langle \mathcal{L},R^{\perp} \rangle =  \sum_{(i,j)\in E} \langle M_{ij},R^{\perp} \rangle = \| \mathcal{A}_G (R^{\perp})\|_1$. Here, we used the fact that $ \langle M_{ij},R^{\perp} \rangle \geq 0$, since $M_{ij} \succeq \bf{0}$ and $R^{\perp} \succeq \bf{0}$. Hence, $ \| \mathcal{A}_G (\tilde{R})\|_1\leq
 2 |E| \Delta + \langle \mathcal{L},R^{\perp}\rangle $. 
 Due to Theorem~\ref{thm:OmegaL}, Eq.~\eqref{eq:ub_OmegaRperp}, and Claim~\ref{claim:sigma_max_omega}, 
\begin{equation*}
\langle \mathcal{L},R^{\perp}\rangle \leq C(nr^d)^3 r^{-2}\langle \Omega,R^{\perp}\rangle \leq C(nr^d)^6 \frac{n}{r^2}\Delta,
\end{equation*}
whence we obtain
\begin{equation*}
\| \mathcal{A}_G (\tilde{R})\|_1\leq C(nr^d)^6 \frac{n}{r^2} \Delta.
\end{equation*}
 
 \noindent The last step is to write $\|\mathcal{A}_G(\tilde{R})\|_1$ more explicitly. Notice that, 
 \begin{equation*}
 \| \mathcal{A}_G (\tilde{R})\|_1 = \sum_{(l,k) \in E} 
| \langle M_{lk}, XY^T + YX^T \rangle | =
2 \sum_{(l,k) \in E} | \langle x_l - x_k,y_l - y_k\rangle | = 2 \|\cR_{G,X}(Y)\|_1.
\end{equation*}

\noindent Invoking Theorem~\ref{thm:cheeger} and Proposition~\ref{pro:aux2}, we have
\begin{align*}
\|\tilde{R}\|_1 &\leq Cr^{-d-2} \|\cR_{G,X}(Y)\|_1 \\
&= C r^{-d-2} \|\mathcal{A}_G(\tilde{R})\|_1 \leq C(nr^d)^5 \frac{n^2}{r^4} \Delta.
\end{align*}
\end{proof}
%
\subsection{Proof of Theorem~\ref{thm:cheeger}}
We begin with some definitions and initial setup.

\begin{define}
The \emph{$d$-dimensional hypercube} $M_d$ is the simple graph whose vertices are the $d$-tuples with entries in $\{0,1\}$ and whose edges are the pairs of $d$-tuples that differ in exactly one position. Also, we use $M^{(2)}_d$ to denote the graph with the same set of vertices as $M_d$, whose edges are the pairs of $d$-tuples that differ in at most two positions. 
\end{define}

\begin{define}
An \emph{isomorphism} of graphs $G$ and $H$ is a bijection between the vertex sets of $G$ and $H$, say $\phi : V(G) \to V(H)$, such that any two vertices $u$ and $v$ of $G$ are adjacent in $G$ if and only if $\phi(u)$ and $\phi(v)$ are adjacent in $H$. The graphs $G$ and $H$ are called isomorphic, denoted by $G \simeq H$ if an isomorphism exists between $G$ and $H$.  
\end{define}

\smallskip
\noindent{\bf Chains and Force Flows.} A \emph{chain} $G_{ij}$ between nodes $i$ and $j$ is a sequence of subgraphs $H_1,H_2,\cdots,H_k$ of $G$, such that, $H_p \simeq M_d^{(2)}$ for $1 \le p \le k$, $H_p \cap H_{p+1} \simeq M^{(2)}_{d-1}$ for $1\le p \le k-1$ and $H_p \cap H_{p+2}$ is empty for $1 \le p \le k-2$. Further, $i$ (resp. $j$) is connected to all vertices in $V(H_1)\setminus V(H_2)$ (resp. $V(H_k)\setminus V (H_{k-1})$). See Fig.~\ref{fig:chain} for an illustration of a chain in case $d=2$.

A \emph{force flow} $\gamma$ is a collection of chains $\{G_{ij}\}_{1 \leq i \neq j \leq n}$ for all $n\choose 2$ node pairs. Let $\Gamma$ be the collection of all possible $\gamma$. Consider the probability distribution induced on $\Gamma$ by selecting the chains between all node pairs in the following manner. Chains are chosen independently for different node pairs. Consider a particular node pair $(i,j)$. Let $\ell = \|x_i -x_j\|$ and $a = (x_i -x_j) / \|x_i -x_j\|$. Define $\tilde{r} = \frac{3r}{4 \sqrt{2}}$, and choose nonnegative numbers $m \in \mathbb{Z}$ and $\eta \in \reals$, such that, $\ell = m \tilde{r} + \eta$ and $\eta < \tilde{r}$. Consider the following set of points on the line segment between $x_i$ and $x_j$.
 \begin{equation*}
 \xi_k = x_i + \frac{\eta}{2} + (k-1)\tilde{r}a, \quad
 \text{for}\; 1 \leq k \leq m+1.
 \end{equation*}
 Construct the sequence of hypercubes in direction of $a$, with centers at $(\xi _ k + \xi_{k+1})/2$, and side length $\tilde{r}$. (See Fig.~\ref{fig:chain_proof} for an illustration). Denote the set of vertices in this construction by $\{z_k\}$. 
Now, partition the space $[-0.5,0.5]^d$ into hypercubes (bins) of side length $\frac{r}{8\sqrt{d}}$. From the proof of Proposition~\ref{pro:sampling_lemma}, w.h.p., every bin contains at least one of the nodes $\{x_k\}_{k \in [n]}$. For every vertex $z_k$, choose a node $x_k$ uniformly at random among the nodes in the bin that contains $z_k$. Hence, $\|x_k -z_k\| \leq \frac{r}{8}$ and
 \begin{equation*}
 \|x_l - x_k\| \leq \|x_l -z_l\| + \|z_l - z_k\| + \|z_k - x_k\| \leq
 \frac{r}{4} + \|z_l -z_k\|, \quad \forall l,k.
 \end{equation*}

 \noindent By wiggling points $\{z_k\}$ to nodes $\{x_k\}$, we obtain a perturbation of the sequence of hypercubes, call it $G_{ij}$. It is easy to see that $G_{ij}$ is a chain between nodes $i$ and $j$.
 \begin{figure} [!t]
\centering
\includegraphics*[viewport = 155 300 600 410, width = 4in]{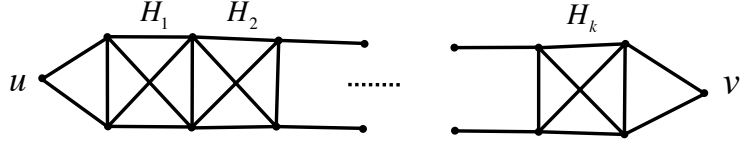}
\caption{\small An illustration of a chain $G_{uv}$} 
\label{fig:chain}
\end{figure}  
 
Under the above setup, we claim the following two lemmas.

\begin{lem}
\label{lem:congestion}
 Under the probability distribution on $\Gamma$ as described above, the expected number of chains containing a particular edge is upper bounded by $Cr^{-d-1}$, w.h.p., where $C = C(d)$ is a constant.
\end{lem}

The proof is discussed in Appendix~\ref{App:chain_exist}.
\begin{lem} \label{lem:cheeger_aux}
Let $G_{ij}$ be the chain between nodes $i$ and $j$ as described above. There exists a constant $C=C(d)$, such that,

\begin{equation*}
\left | \langle x_i -x_j, y_i - y_j \rangle \right | \leq Cr^{-1}\sum_{(l,k) \in E(G_{ij})} 
\left | \langle x_l -x_k, y_l - y_k \rangle \right |, \quad \forall 1\le i,j \le n.
\end{equation*}

\end{lem}
The proof is deferred to Section~\ref{sec:cheeger-aux}. Now, we are in position to prove Theorem~\ref{thm:cheeger}.

\begin{figure} [!t]
\centering
\includegraphics*[viewport =180 370 500 510, width = 5in]{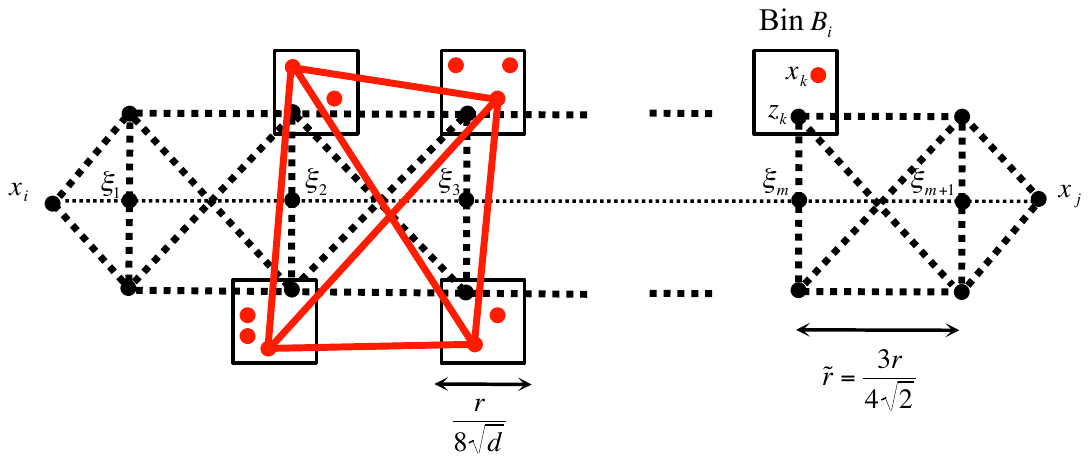}
\caption{\small Construction of chain $G_{ij}$ for case $d=2$.} 
\label{fig:chain_proof}
\end{figure}

%
%
%
%

%


\begin{proof}[Proof(Theorem~\ref{thm:cheeger})]
Consider a force flow $\gamma = \{G_{ij}\}_{1\le i,j \le n}$. Using lemma~\ref{lem:cheeger_aux}, we have

\begin{align}
\sum_{i,j} \left | \langle x_i -x_j, y_i - y_j \rangle \right | 
&\leq Cr^{-1} \sum_{i,j} \sum_{(l,k) \in E(G_{ij})}
\left | \langle x_l -x_k, y_l - y_k \rangle \right | \nonumber\\
&\leq Cr^{-1} \sum_{(l,k) \in E(G)} \Big( \sum_{G_{ij}: (l,k) \in E(G_{ij})} 1\Big) 
\left | \langle x_l -x_k, y_l - y_k \rangle \right | \nonumber\\
& = Cr^{-1} \sum_{(l,k) \in E(G)} b(\gamma,(l,k)) \left | \langle x_l -x_k, y_l - y_k \rangle \right | \label{eq:chain_congestion},
\end{align}
where $b(\gamma,(l,k))$ denotes the number of chains passing through edge $(l,k)$. Notice that in Eq.~\eqref{eq:chain_congestion}, $b(\gamma,(l,k))$ is the only term that depends on the force flow $\gamma$. Hence, $b(\gamma, (l,k))$ can be replaced by its expectation under a probability distribution on $\Gamma$. According to Lemma~\ref{lem:congestion}, under the described distribution on $\Gamma$, the average number of chains containing any particular edge is upper bounded by $Cr^{-d-1}$, w.h.p. Therefore,
\begin{equation*}
\sum_{i,j} \left | \langle x_i -x_j, y_i - y_j \rangle \right |
\leq Cr^{-d-2} \sum_{(l,k) \in E(G)} \left | \langle x_l -x_k, y_l - y_k \rangle \right |.
\end{equation*}
Equivalently, $\|\cR_{K_n,X}(Y)\|_1 \le Cr^{-d-2} \|\cR_{G,X}(Y)\|_1$, with high probability.
\end{proof}

\subsubsection{Proof of Lemma~\ref{lem:cheeger_aux}}
\label{sec:cheeger-aux}
\begin{proof}
Assume that $|V(G_{ij})|= m+1$ . Relabel the vertices in the chain $G_{ij}$ such that the nodes $i$ and $j$ have labels $0$ and $m$ respectively, and all the other nodes are labeled in $\{1,\cdots,m-1\}$. Since both sides of the desired inequality are invariant to translations, without loss of generality we assume that $x_0 = y_0 = 0$. For a fixed vector $y_m$ consider the following optimization problem:
\begin{equation*}
\Theta = \min_{y_1,\cdots,y_{m-1} \in \mathbb{R}^d} \sum_{(l,k) \in E(G_{ij})} 
\left | \langle x_l -x_k,y_l - y_k \rangle \right |.
\end{equation*}
To each edge $(l,k) \in E(G_{ij})$, assign a number $\lambda_{lk}$. (Note that $\lambda_{lk} = \lambda_{kl}$). For any assignment with $\max_{(l,k) \in E(G_{ij})} |\lambda_{lk}| \leq 1$, we have
\begin{align*}
\Theta &\geq  \min_{y_1,\cdots,y_{m-1} \in \mathbb{R}^d} \sum_{(l,k) \in E(G_{ij})} \lambda_{lk} \langle x_l - x_k,y_l - y_k\rangle\\ 
& =  \min_{y_1,\cdots,y_{m-1} \in \mathbb{R}^d} \sum_{\substack{ l \in G_{ij} \\l \neq 0}} \sum_{k \in \partial{l}} 
\lambda_{lk} \langle y_l,x_l - x_k \rangle\\
&=  \min_{y_1,\cdots,y_{m-1} \in \mathbb{R}^d} \sum_{\substack{ l \in G_{ij} \\l \neq 0}} \langle y_l,\sum_{k \in \partial{l}} \lambda_{lk} (x_l - x_k)\rangle,
\end{align*}
where $\partial{l}$ denotes the set of adjacent vertices to $l$ in $G_{ij}$. Therefore,
\begin{equation}
\label{eq:max_min}
\Theta \geq \max_{\lambda_{lk}: |\lambda_{lk}| \leq 1}\;
\min_{y_1,\cdots,y_{m-1} \in \mathbb{R}^d} \sum_{\substack{ l \in G_{ij} \\l \neq 0}} \langle y_l,\sum_{k \in \partial{l}} \lambda_{lk} (x_l - x_k)\rangle.
\end{equation}
Note that the numbers $\lambda_{lk}$ that maximize the right hand side should satisfy
$
\sum_{k \in \partial{l}} \lambda_{lk} (x_l - x_k) = 0, \forall l \neq 0, m.
$
Thus,
$
\Theta \geq \langle y_m,\sum_{k \in \partial{m}} \lambda_{mk} (x_m - x_k) \rangle$. Assume that we find values $\lambda_{lk}$ such that
\begin{equation}
\begin{split} \label{eq:constraints}
\begin{cases} 
\sum_{k \in \partial{l}} \lambda_{lk} (x_l - x_k) = 0 & \forall l \neq 0, m,\\
\sum_{k \in \partial{m}} \lambda_{mk} (x_m - x_k) = x_m,\\
\underset{(l,k) \in E(G_{ij})}{\max} |\lambda_{lk}| \leq Cr^{-1}.
\end{cases}
\end{split}
\end{equation}
Given these values $\lambda_{lk}$, define $\tilde{\lambda}_{lk} =\dfrac{\lambda_{lk}}{ \underset{(l,k) \in E(G_{ij})}{\max} |\lambda_{lk}|}$. Then $|\tilde{\lambda}_{lk}| \leq 1$ and
\begin{equation*}
\Theta \geq \langle y_m,\sum_{k \in \partial{m}} \tilde{\lambda}_{mk} (x_m - x_k) \rangle = 
\langle y_m, \frac{1}{\max_{l,k} |\lambda_{lk}| } x_m\rangle
\geq Cr \langle y_m, x_m\rangle,
\end{equation*}
which proves the thesis.

Notice that for any values $\lambda_{lk}$ satisfying~\eqref{eq:constraints}, we have
\begin{equation*}
\begin{cases}
&\sum_{l \in V(G_{ij})} \sum_{k \in \partial{l}} \lambda_{lk} (x_l - x_k) = \sum_{k \in \partial{0}} \lambda_{0k} (x_0 - x_k) + x_m\\
&\sum_{l \in V(G_{ij)}} \sum_{k \in \partial{l}} \lambda_{lk} (x_l - x_k) = \sum_{(l,k) \in E(G_{ij})} \lambda_{lk} (x_l - x_k) + \lambda_{kl} (x_k - x_l) = 0
\end{cases}
\end{equation*}
Hence, $\sum_{k \in \partial{0}} \lambda_{0k} (x_0 - x_k) = - x_m$. 

It is convenient to generalize the constraints in Eq.~\eqref{eq:constraints}. Consider the following linear system of equations with unknown variables $\lambda_{lk}$.
\begin{equation}
\sum_{k \in \partial{l}} \lambda_{lk}(x_l - x_k) = u_l, \quad \text{for} \; l = 0,\cdots,m.\label{eq:general_forces}
\end{equation}

\noindent Writing Eq.~\eqref{eq:general_forces} in terms of the rigidity matrix of $G_{ij}$, and using the characterization of its null space as discussed in section~\ref{sec:rigidity}, it follows that Eq.~\eqref{eq:general_forces} have a solution if and only if 
\begin{equation}
\label{eq:equilibrium}
\sum_{i=0}^m u_i =0,\quad
\sum_{i=0}^m u_i^T Ax_i = 0,
\end{equation}
where $A \in \mathbb{R}^{d \times d}$ is an arbitrary anti-symmetric matrix.

\bigskip
\noindent\textbf{A mechanical interpretation.} For any pair $(l,k) \in E(G_{ij})$, assume  a spring with spring constant $\lambda_{lk}$ between nodes $l$ and $k$. Then, by  Eq.~\eqref{eq:general_forces}, $u_l$ will be the force imposed on node $l$. The first constraint in Eq.~\eqref{eq:equilibrium} states that the net force on $G_{ij}$ is zero (\emph{force equilibrium}), while the second condition states that the net torque is zero (\emph{torque equilibrium}).

Indeed, $\sum_{i=0}^{m} u_i^T A u_i = \langle A, \sum_{i=0}^{m} u_i x_i^T\rangle = 0$, for every anti-symmetric matrix $A$ if and only if $\sum_{i=0}^{m} u_i x_i^T$ is a symmetric matrix. Therefore,
\begin{equation*}
\sum_{i=0}^{m} u_i \wedge x_i = \sum_{i=0}^{m}
\left(\sum_{\ell=1}^{d} u_i^{(\ell)} e_{\ell} \right) \wedge
\left(\sum_{k=1}^{d} x_i^{(k)} e_{k} \right)=
\sum_{\ell,k} \sum_{i=0}^{m} (u_i^{(\ell)} x_i^{(k)} - x_i^{(\ell)} u_i^{(k)}) (e_{\ell} \wedge e_k) = 0.
\end{equation*}

With this interpretation in mind, we propose a two-part procedure to find the spring constants $\lambda_{lk}$ that obey the constraints in~\eqref{eq:constraints}.
 
\bigskip
\textbf{Part (i):} For the sake of simplicity, we focus here on the special case $d=2$. The general argument proceeds along the same lines and is deferred to Appendix~\ref{App: stage_1_G}.

Consider the chain $G_{ij}$ between nodes $i$ and $j$, cf. Fig.~\ref{fig:chain}. For every $1 \le p \le k$, let $\mathcal{F}_p$ denote the common side of $H_p$ and $H_{p+1}$. Without loss of generality, assume $V(\mathcal{F}_p) = \{1,2\}$, and $x_m$ is in the direction of $e_1$. Find the forces $f_1$, $f_2$ such that
\begin{equation}
\begin{split} \label{eq:force_constraint}
f_1 + f_2 &= x_m, \quad f_1 \wedge x_1 + f_2 \wedge x_2 = 0,\\
&\|f_1\|^2+\|f_2\|^2 \leq C \|x_m\|^2.
\end{split}
\end{equation}
To this end, we solve the following optimization problem.
\begin{equation}
\begin{split} \label{eq:opt}
&\text{minimize} \quad \; \frac{1}{2} (\|f_1\|^2 + \|f_2\|^2)\\
&\text{subject to}  \quad f_1 + f_2 = x_m, \quad f_1 \wedge x_1 + f_2 \wedge x_2 = 0
\end{split}
\end{equation}
It is easy to see that the solutions of~\eqref{eq:opt} are given by
\begin{align*}
 &\begin{cases}
   f_1 = \frac{1}{2} x_m + \frac{1}{2} \gamma A (x_1 - x_2)\\
   f_2 = \frac{1}{2} x_m - \frac{1}{2} \gamma A (x_1-x_2)
  \end{cases}\\
  &\gamma = -\frac{1}{\|x_1 -x_2\|^2} x_m^T A(x_1 + x_2),\quad 
  A = \begin{pmatrix} 0 &-1 \\ 1 & 0\end{pmatrix}.
\end{align*}

Now, we should show that the forces $f_1$ and $f_2$ satisfy the constraint $\|f_1\|^2 + \|f_2\|^2 \leq C \|x_m\|^2$, for some constant $C$. Clearly, it suffices to prove $\|\gamma (x_1 - x_2)\| \leq C \|x_m\|$. Observe that
\begin{align*}
\frac{\|\gamma (x_1 - x_2)\|}{\|x_m\|} &= 
\frac{1}{\|x_1 -x_2\|} \bigg| \frac{x_m^T}{\|x_m\|} A(x_1 + x_2) \bigg|\\
&= \frac{1}{\|x_1 -x_2\|} |e_1^T A(x_1 + x_2)|\\
&= \frac{1}{\|x_1 -x_2\|} |e_2^T (x_1 + x_2)|.
\end{align*}
From the construction of chain $G_{ij}$, we have
\begin{equation*}
|e_2^T (x_1 + x_2)| \leq \frac{r}{4}, \quad \|x_1 - x_2\| \geq \frac{r}{4},
\end{equation*}
which shows that $\|\gamma (x_1 -x_2)\| \leq \|x_m\|$.
 
\bigskip
\textbf{Part (ii):} For each $H_p$ consider the following set of forces
 \begin{equation}
   u_{i} = 
   \begin{cases}
   f_i \; &\text{if} \; i \in V(\mathcal{F}_p)\\
   -f_i & \text{if} \; i \in V(\mathcal{F}_{p-1})
   \end{cases}, \; 
\end{equation}
Also, let $u_0 = -x_m$ and $u_m  = x_m$. (cf. Fig.~\ref{fig:force}). 

Notice that $\sum_{i \in V(H_p)} u_i = 0$, $ \sum_{i \in V(H_p)} u_i \wedge x_i = 0
$, and thus by the discussion prior to Eq.~\eqref{eq:equilibrium}, there exist values $\lambda_{lk}^{(H_p)}$, such that
\begin{equation*}
\sum_{k:(l,k) \in E(H_p)} \lambda_{lk}^{(H_p)} (x_l - x_k) = u_l, \quad \forall l \in V(H_p).
\end{equation*}
Writing this in terms of $R^{(H_p)}$, the rigidity matrix of $H_p$, we have 
\begin{equation}
\label{eq:Rig_lambda}
(R^{(H_p)})^T \underline{\lambda}^{(H_p)} = \underline{u},
\end{equation}
where the vector $\underline{\lambda}^{(H_p)} = [\lambda^{(H_p)}_{lk}]$ has size $|E(H_p)| = d(d+1) 2^{d-2}$, and the vector $\underline{u} = [u_l]$ has size $d \times |V(H_p)| = d 2^d$.
Among the solutions of Eq.~\eqref{eq:Rig_lambda}, choose the one that is orthogonal to the nullspace of $(R^{(H_p)})^T$.
Therefore, 
\begin{equation*}
\sigma_{\min}(R^{(H_p)}) \|\lambda^{(H_p)}\|_{\infty} \leq \sigma_{\min}(R^{(H_p)}) \|\lambda^{(H_p)}\|_2 \leq \|u\| \leq C \|x_m\|.
\end{equation*}

Form the construction of the chains, $H_p$ is a perturbation of the d-dimensional hypercube with side length $\tilde{r} = \frac{3r}{4\sqrt{2}}$. (each vertex wiggles by at most $\frac{r}{8}$). Using the fact that $\sigma_{\min}(.)$ is a Lipschitz continuos function of its argument, we get that $\sigma_{\min}(R^{(H_p)}) \geq Cr$, for some constant $C =C(d)$. Also, $\|x_m\| \leq 1$. Hence, $\|\lambda^{(H_p)}\|_\infty \leq Cr^{-1}$.
 
\begin{figure} [!t]
\centering
\includegraphics*[viewport = 170 300 420 410, width = 3in]{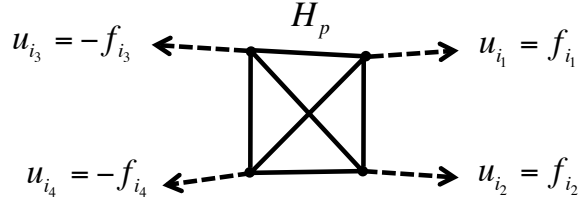}
\caption{\small $H_p$ and the set of forces in Part $(ii)$} 
\label{fig:force}
\end{figure}   

Now define
\begin{equation}
\label{eq:def_lambda}
\lambda_{lk} = \sum_{H_p : (l,k) \in E(H_p)} \lambda^{(H_p)}_{lk},
\quad \quad \forall (l,k) \in E(G_{ij}).
\end{equation}
We claim that the values $\lambda_{lk}$ satisfy the constraints in~\eqref{eq:constraints}. 

First, note that for every node $l$,
\begin{align}
\sum_{k \in \partial{l}} \lambda_{lk} (x_l - x_k) &=
\sum_{k \in \partial{l}} \left( \sum_{H_p : (l,k) \in E(H_p)} \lambda_{lk}^{(H_p)} \right) (x_l -x_k) \nonumber\\
&= \sum_{H_p : l \in V(H_p)} \left( \sum_{k: (l,k) \in E(H_p)} \lambda_{lk}^{(H_p)} (x_l - x_k) \right) \nonumber\\
&= \sum_{H_p : l \in V(H_p)} u_l. \label{eq:sum_f}
\end{align}
For nodes $l \notin \{0,m\}$, there are two $H_p$ containing $l$. In one of them, $u_l = f_l$ and in the other $u_l = -f_l$. Hence, the forces $u_l$ cancel each other in Eq.~\eqref{eq:sum_f} and the sum is zero. At nodes $0$ and $m$, this sum is equal to $-x_m$ and $x_m$ respectively.

Second, since each edge participates in at most two $H_p$, it follows from Eq.~\eqref{eq:def_lambda} that $|\lambda_{lk}| \leq C r^{-1}$.
\end{proof}

\section{Proof of Theorem~\ref{thm:main_result} (Lower Bound)}
\label{sec:converse}
\begin{proof}{
Consider the `\emph{bending}' map $\mathcal{T}: [-0.5,0.5]^d \to \mathbb{R}^{d+1}$, defined as
\begin{equation*}
 \mathcal{T}(t_1,t_2,\cdots,t_d) = (R\sin \frac{t_1}{R},t_2,\cdots, t_d,R(1-\cos \frac{t_1}{R}))
 \end{equation*}
This map bends the hypercube in the $d+1$ dimensional space. Here, $R$ is the curvature radius of the embedding (for instance, $R\gg1$ corresponds to slightly bending the hypercube, cf. Fig.~\ref{fig:Bending_map}).

 \begin{figure} [!t]
\centering
\includegraphics*[viewport = 150 220 600 380, width = 5in]{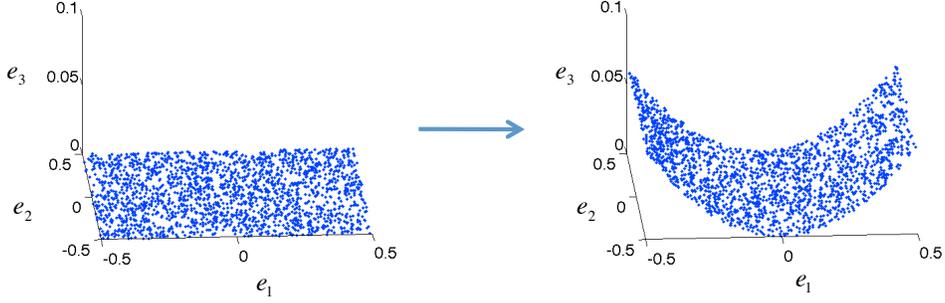}
\caption{\small Bending map $\mathcal{T}$, with $R =2$, and $d=2$.} 
\label{fig:Bending_map}
\end{figure} 

Now for a given $\Delta$, let $R = \max\{1,r^2 \Delta^{-1/2}\}$ and give the distances $\tilde{d}_{ij} = \|\mathcal{T}(x_i) - \mathcal{T}(x_j)\|$ as the input distance measurements to the algorithm.
First we show that these adversarial measurements satisfy the noise constraint $\|\tilde{d}^2_{ij} - d^2_{ij}\| \leq \Delta$.

\begin{align*}
d^2_{ij} - \tilde{d}^2_{ij}  &= (x_{i}^{(1)} - x_{j}^{(1)})^2 - R^2 \Big[ \sin \Big(\frac{x_{i}^{(1)}}  {R}\Big) - \sin \Big(\frac{x_{j}^{(1)}}  {R} \Big) \Big]^2 \\
& -R^2 \Big[ \cos \Big(\frac{x_{i}^{(1)}} {R}\Big) - \cos \Big(\frac{x_{j}^{(1)}} {R}\Big) \Big]^2 \\
&=  (x_{i}^{(1)} - x_{j}^{(1)})^2 - R^2 \Big[ 2 - 2 \cos \Big( \frac{x_{i}^{(1)} - x_{j}^{(1)}}  {R} \Big) \Big] \\
&\leq \frac{( x_{i}^{(1)} - x_{j}^{(1)} )^4} {2R^2} \leq \frac{r^4}{2 R^2} \le \Delta.
\end{align*}
Also, $\tilde{d}_{ij} \leq d_{ij} $. Therefore, $|z_{ij}| = |\tilde{d}^2_{ij} - d^2_{ij}| \leq \Delta$. 

The crucial point is that the SDP in the first step of the algorithm is oblivious of dimension $d$. Therefore, given the measurements $\tilde{d}_{ij}$ as the input, the  SDP will return the Gram matrix $Q$ of the positions $ \tilde{x}_i =  L \mathcal{T}(x_i)$, i.e., $Q_{ij} = \tilde{x}_i \cdot \tilde{x}_j$.  Denote by $\{u_1,\cdots,u_d\}$, the eigenvectors of $Q$ corresponding to the $d$ largest eigenvalues. Next, the algorithm  projects the positions $\{\tilde{x}_i\}_{i \in [n]}$ onto the space $U = \langle u_1,\cdots, u_d\rangle$ and returns them as the estimated positions in $\mathbb{R}^d$. 
Hence,
\begin{equation*}
d(X,\hX) = \frac{1}{n^2} \|XX^T - P_U \tilde{X}\tilde{X}^T P_U\|_1.
\end{equation*}

Let $W = \langle e_1,e_2,\cdots,e_d \rangle$ (see Fig.~\ref{fig:Planes}). Then,
\begin{equation}
\label{eq:converse_bound}
d(X,\hX) \geq \frac{1}{n^2} \|XX^T - \tilde{X} P_W \tilde{X}^T\|_1- \frac{1}{n^2}\|\tilde{X} P_W \tilde{X}^T - P_U \tilde{X}\tilde{X}^T P_U\|_1.                     
\end{equation}

We bound each terms on the right hand side separately. For the first term,
\begin{align}
&\frac{1}{n^2} \|XX^T - \tilde{X} P_W \tilde{X}^T \|_1\nonumber\\
&= \frac{1}{n^2} \sum_{1 \leq i,j \leq n} \bigg|x_i^{(1)}x_j^{(1)} - R^2 \sin \Big(\frac{x_i^{(1)}}{R}\Big) \sin \Big(\frac{x_j^{(1)}}{R}\Big) \bigg| \nonumber\\
& \stackrel{(a)}{=} \frac{1}{n^2} \sum_{1 \leq i,j \leq n}
\bigg|x_i^{(1)} x_j^{(1)} - R^2 \Big(\frac{x_i^{(1)}}{R} - \frac{(x_i^{(1)})^3}{3! R^3} +
 \frac{\xi_i^5}{5!R^5}\Big) \Big(\frac{x_j^{(1)}}{R} - \frac{(x_j^{(1)})^3}{3! R^3} +
 \frac{\xi_j^5}{5! R^5} \Big)\bigg| \nonumber\\
& \stackrel{(b)}{\geq} C\bigg(\frac{R}{n} \bigg)^2 \sum_{1 \leq i,j \leq n} \bigg|\frac{1}{3!} \Big(\frac{x_i^{(1)}}{R}\Big)\Big(\frac{x_j^{(1)}} {R}\Big)^3 + \frac{1}{3!}\Big(\frac{x_j^{(1)}}{R}\Big) \Big(\frac{x_i^{(1)}} {R} \Big)^3\bigg| \nonumber\\
&\geq \frac{C}{(nR)^2} \Big(\sum_{ 1\leq i \leq n} |x_i^{(1)}| \Big)\Big(\sum_{ 1\leq j \leq n} |x_i^{(1)}|^3 \Big) \geq \frac{C}{R^2}
\label{eq:bound_term1},
\end{align}
where $(a)$ follows from Taylor's theorem, and $(b)$ follows from $|\xi_i / R| \leq |x_i / R| \leq 1/2$. 
 
\begin{figure} [!t]
\centering
\includegraphics*[viewport = 40 50 720 510, width = 3in]{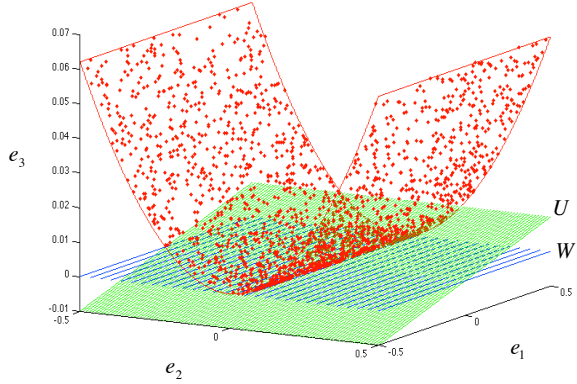}
\caption{\small An illustration of subspaces $U$ and $W$.} 
\label{fig:Planes}
\end{figure} 
 
 The next Proposition provides an upper bound for the second term on the right hand side of Eq.~\eqref{eq:converse_bound}.
 
 \begin{pro}
 \label{pro:bound_projections}
The following is true.
\begin{equation*}
 \frac{1}{n^2}\|\tilde{X} P_W \tilde{X}^T - P_U \tilde{X}\tilde{X}^T P_U\|_1 \to 0 \text{ a.s.}, \quad \text{as } n\to \infty.
\end{equation*}
 \end{pro}
 
 Proof of this Proposition is provided in the next section.
 
Using the bounds given by Proposition~\ref{pro:bound_projections} and Eq.~\eqref{eq:bound_term1},
 we obtain that, w.h.p.,
 \begin{equation*}
 d(X,\hX) \geq \frac{C_1}{R^2}
               \geq C \min\{1,\frac{\Delta}{r^4}\}.
 \end{equation*}
 The result follows.
}\end{proof}
 %
\subsection{Proof of Proposition~\ref{pro:bound_projections}}

We first establish the following remarks.
\begin{remark}
\label{rem:chord_dist}
Let $a,\; b\in \reals^{m}$ be two unitary vectors. Then,
\begin{equation*}
\|aa^T - bb^T\|_2 = \sqrt{1- (a^Tb)^2}.
\end{equation*}
\end{remark}
For proof, we refer to Appendix~\ref{App:chord_dist}

\begin{remark}
\label{rem:Weyl}
Assume $A$ and $\tilde{A}$ are $p \times p$ matrices. Let $\{\lambda_i\}$ be the eigenvalues of A such that $\lambda_1 \geq \cdots \geq \lambda_{p-1} > \lambda_p$. Also, let $v$ and $\tilde{v}$ respectively denote the eigenvectors of $A$ and $\tilde{A}$ corresponding to their smallest eigenvalues. Then,

\begin{equation*}
1- (v^T \tilde{v})^2 \leq \frac{4\|A - \tilde{A}\|_2}{\lambda_{p-1} - \lambda_p}.
\end{equation*}
\end{remark}  

The proof is deferred to Appendix~\ref{App:Weyl}.

 \begin{proof}[Proof(Proposition~\ref{pro:bound_projections})]
 Let $\tilde{X} = \sum_{i=1}^{d+1} \sigma_i u_i \hat{w}_i^T$ be the singular value decomposition of $\tX$, where $\|u_i\| = \|\hat{w}_i\|=1$, $u_i \in \reals^{n}$, $\hat{w}_i \in \reals^{d+1}$ and $\sigma_1 \geq \sigma_2 \geq \cdots \geq \sigma_{d+1}$. Notice that 
 \begin{equation*}
 P_U \tilde{X} = \sum_{i=1}^d \sigma_i u_i \hat{w}_i^T = 
 (\sum_{i=1}^{d+1} \sigma_i u_i \hat{w}_i^T) (\sum_{j=1}^d \hat{w}_j  \hat{w}_j^T)=
\tX P_{\hW},
 \end{equation*}
where $\hW = \langle \hat{w}_1, \cdots, \hat{w}_d \rangle$, and $P_{\hW} \in \reals^{(d+1) \times (d+1)}$. Hence, $P_U \tX {\tX}^T P_U = \tX P_{\hW} {\tX}^T$. Define $M = P_{\hW} - P_W$. Then, we have
\begin{align}
\frac{1}{n^2} \|\tX P_W \tX^T - P_U \tX \tX^T P_U\|_1 &=
\frac{1}{n^2} \|\tX M \tX^T\|_1 = 
\frac{1}{n^2} \sum_{1 \leq i,j \leq n} |\tilde{x}_i^T M \tilde{x}_j| \nonumber\\
&\leq \frac{1}{n^2} \|M\|_2 \sum_{1 \leq i,j \leq n} \|\tilde{x}_i\| \|\tilde{x}_j\|
\leq \|M\|_2. \label{eq:bound by M_op}
\end{align}

Now, we need to bound $\|M\|_2$.  We have,
\begin{equation*}
M = P_{\hW} -P_W = (I - P_{\hat{w}_{d+1}}) - (I - P_{e_{d+1}}) = P_{e_{d+1}} - P_{\hat{w}_{d+1}}.
\end{equation*}
Using Remark~\ref{rem:chord_dist}, we obtain $\|M\|_2 = \|e_{d+1}^{}e_{d+1}^T - \hat{w}_{d+1}^{}\hat{w}_{d+1}^T\|_2 = \sqrt{1 - (e_{d+1}^T \hat{w}_{d+1}^{})^2}$.

Let $Z_i = \tilde{x}_i \tilde{x}_i^T \in \reals^{(d+1) \times (d+1)}$, $\bar{Z} = \frac{1}{n} \sum_{i=1}^{n} Z_i$ and $Z = \mathbb{E}(Z_i)$. Notice that $\bar{Z} = \frac{1}{n} \tX^T \tX = \frac{1}{n} \sum_{i=1}^{d+1} \sigma_i^2 \hat{w}_i \hat{w}_i^T$. Therefore, $\hat{w}_{d+1}$ is the eigenvector of $\bar{Z}$ corresponding to its smallest eigenvalue. In addition, $Z$ is a diagonal matrix (with $Z_{(d+1),(d+1)}$ the smallest diagonal entry). Hence, $e_{d+1}$ is its eigenvector corresponding to the smallest eigenvalue, $Z_{(d+1),(d+1)}$.

By applying Remark~\ref{rem:Weyl}, we have
\begin{equation}
\label{eq:bound on M_op}
\|M\|_2 \leq  \sqrt{1 - (e_{d+1}^T \hat{w}_{d+1}^{})^2} \leq  \sqrt{\frac{4\|Z - \bar{Z}\|_2}{\lambda_d - \lambda_{d+1}}}, 
\end{equation}
where $\lambda_d >\lambda_{d+1}$ are the two smallest eigenvalues of $Z$.  Let $t$ be a random variable, uniformly distributed in $[-0.5,0.5]$. Then, 
\begin{equation*}
\lambda_{d} = \mathbb{E}\bigg[R^2 \sin^2 \bigg(\frac{t}{R}\bigg) \bigg] \quad \text{and} \quad 
\lambda_{d+1} = \mathbb{E} \bigg[R^2 \bigg(1- \cos^2 \bigg(\frac{t}{R} \bigg) \bigg) \bigg].
\end{equation*}
 Hence, $\lambda_d -\lambda_{d+1} = R^3 (-1/R -\sin(1/R) + 4 \sin(1/2R)) \geq 0.07$, since $R \geq 1$. 

Also, note that $\{Z_i\}_{1\le i \le n }$ is a sequence of iid random matrices with dimension $(d+1)$ and $\|Z\|_{\infty} = \|\E(Z_i)\|_{\infty} < \infty$. By Law of large numbers, $\bar{Z} \to Z$, almost surely. Now, since the operator norm is  a continuos function, we have $\|Z - \bar{Z}\|_2 \to 0$, almost surely. The result follows directly from Eqs.~\eqref{eq:bound by M_op} and~\eqref{eq:bound on M_op}. 


 \end{proof}
\section{Numerical experiments}
\label{sec:simulation}
Theorem~\ref{thm:main_result} considers a worst case model for the measurement noise in which the errors $\{z_{ij}\}_{(i,j) \in E}$ are arbitrary but uniformly bounded as $|z_{ij}| \le \Delta$. The proof of the lower bound (cf. Section~\ref{sec:converse}) introduces errors $\{z_{ij}\}_{(i,j)\in E}$ defined based on a bending map, $\mathcal{T}$. This set of errors results in the claimed lower bound. For clarity, we denote this set of errors by $\{z^{\mathcal{T}}_{ij}\}$. In this section, we consider a mixture model for the measurement errors. For given parameters $\Delta$ and $\ve$, we let 
\begin{eqnarray}\label{eqn:noise_sim}
z_{ij} \sim \ve \gamma_{\Delta/2} + (1-\ve) \delta_{z^{\mathcal{T}}_{i,j}},
\end{eqnarray}
where $\gamma_{\sigma}(x) = 1/(\sqrt{2\pi} \sigma) e^{-x^2/2\sigma^2}$ is the density function of the normal distribution with mean zero and variance $\sigma^2$. The goal of the numerical experiments is to show the dependency of the algorithm performance on each of the parameters $n, r$ and $\Delta$. We consider the following configurations. For each configuration we run the SDP-based algorithm and evaluate $d(X,\hX)$. The error bars in figures correspond to $10$ realizations of that configuration. Throughout the measurement errors are defined according to~\eqref{eqn:noise_sim} with $\ve = 0.1$. 
\begin{enumerate}
\item Fix $\Delta = 0.005$ and $d \in \{2,4\}$. Let $r = 3 (\log n/n)^{1/d}$, with $n \in \{100,120, 140,\cdots, 300\}$. Fig.~\ref{fig:sweep_n} summarizes the results. According to the plot, $d(X,\hX) \propto n^2$ for $d = 2$ and $d(X,\hX) \propto n$ for $d = 4$.   

\item Fix $\Delta = 0.005$, $d = 2$ and $n = 150$. Let $r\in\{0.5, 0.55, 0.6, \cdots, 0.8\}$. The results are shown in Fig.~\ref{fig:sweep_r}. As we see, $d(X,\hX)$ is fairly proportional to $r^{-4}$.

\item Fix $n = 150$, $r = 0.6$ and $d = 2$. Let $\Delta \in \{0.005, 0.01, 0.015, 0.02, 0.025\}$. Fig.~\ref{fig:sweep_delta} showcases the results. The performance deteriorates linearly with respect to $\Delta$.
\end{enumerate}

\textbf{Acknowledgment.} Adel Javanmard is supported by Caroline and Fabian Pease Stanford Graduate Fellowship. This work was partially supported by the NSF CAREER award CCF-0743978, the NSF grant DMS-0806211, and the AFOSR grant FA9550-10-1-0360. The authors thank the anonymous reviewers for their insightful comments.

\begin{figure}[!t]
\centering
\includegraphics*[viewport = 30 175 540 600, width = 3.4in]{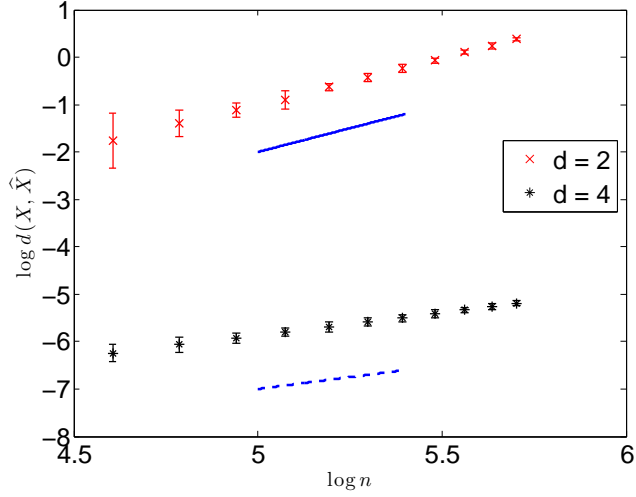}
\caption{\small Performance results for $\Delta = 0.005$, $d= 2, 4$, and $r = 3(\log n/n)^{1/d}$. The plot shows $\log\, d(X,\hX)$ vs. $\log n$ for a set of values of $n$. The solid line and the dashed line respectively correspond to $d(X,\hX) \propto n^2$ and $d(X,\hX) \propto n$ and are plotted as reference.} 
\label{fig:sweep_n}
\end{figure}  

\vspace{4cm}

\begin{figure}[!h]
\centering
\includegraphics*[viewport = 30 175 545 600, width = 3.5in]{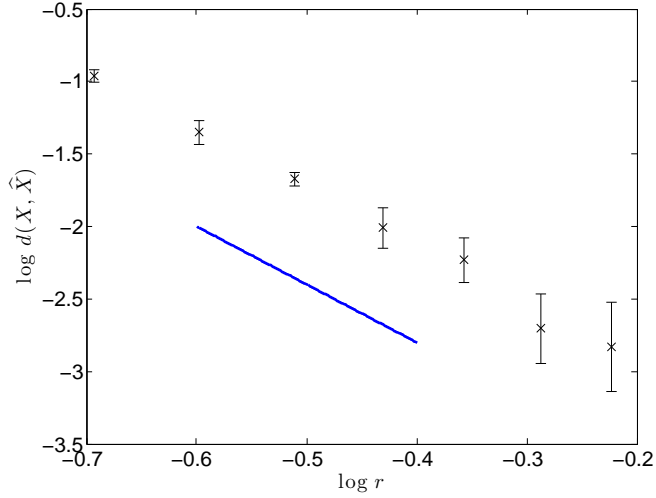}
\caption{\small Performance results for $\Delta = 0.005$, $d=2$, and $n = 150$. The plot shows $\log\, d(X,\hX)$ vs. $\log r$ for a set of values of $r$. The solid line corresponds to $d(X,\hX) \propto r^{-4}$ and is plotted as reference.} 
\label{fig:sweep_r}
\end{figure}  

\begin{figure}[!h]
\centering
\includegraphics*[viewport = 30 175 545 650, width = 3.5in]{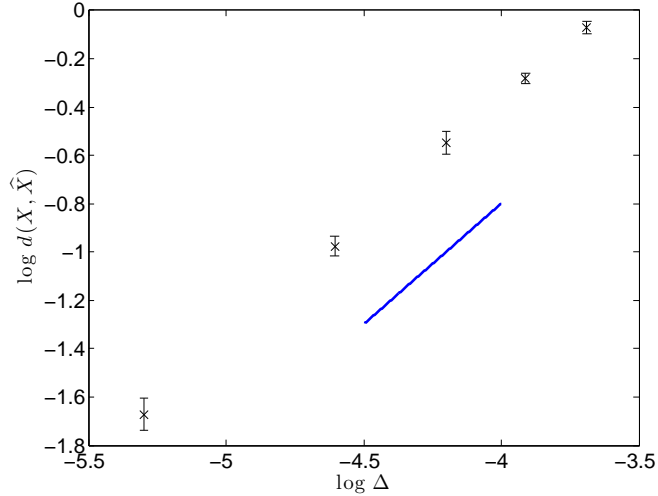}
\caption{\small Performance results for $n = 150$, $r = 0.6$, and $d = 2$. The plot shows $\log\, d(X,\hX)$ vs. $\log \Delta$ for a set of values of $\Delta$. The solid line corresponds to $d(X,\hX) \propto \Delta$ and is plotted as reference.} 
\label{fig:sweep_delta}
\end{figure} 
 
\newpage
\appendix
\section{Proof of Remark~\ref{rem:region}}
\label{App:region}
For $1\le j \le n$, let random variable $z_j$ be $1$ if node $j$ is in region $\mathcal{R}$ and 0 otherwise. The variables $\{z_j\}$ are i.i.d. Bernoulli with probability $V(\mathcal{R})$ of success. Also, $n(\mathcal{R}) = \sum_{j =1}^n z_j$. 

\noindent By application of the Chernoff bound we obtain
\begin{equation*}
\P \Big(\Big| \sum_{j =1}^n z_j - n V(\mathcal{R}) \Big| \geq \delta nV(\mathcal{R}) \Big) \geq 2 \exp 
\Big( - \frac{\delta^2 n V(\mathcal{R})}{2} \Big).
\end{equation*}
\noindent Choosing $\delta = \sqrt{\dfrac{2c\log n}{n V(\mathcal{R})}}$, the right hand side becomes $2 \exp(-c \log n) = 2/n^c$. Therefore, with probability at least $1- 2/n^c$,
\begin{equation}
\label{eq:n(R)}
n(\mathcal{R}) \in nV(\mathcal{R}) + [- \sqrt{2c n V(\mathcal{R})\log n},  \sqrt{2c n V(\mathcal{R}) \log n}].
\end{equation}

\section{Proof of Proposition~\ref{pro:sampling_lemma}}
\label{App:sampling_lemma}

We apply the bin-covering technique. Cover the space $[-0.5,0.5]^d$ with a set of non-overlapping hypercubes (bins) whose side lengths are $\delta$. Thus, there are a total of $m= \lceil{1/\delta}\rceil^d$ bins, each of volume $\delta^d$. In formula, bin $(j_1,\cdots, j_d)$ is the hypercube $[(j_1-1) \delta, j_1 \delta) \times \cdots \times [(j_d-1) \delta, j_d \delta)$, for $j_k \in \{1,\cdots,\lceil1 /\delta\rceil\}$ and $k \in \{1, \cdots, d\}$. Denote the set of bins by $\{B_k\}_{1 \le k \le m}$. Assume $n$ nodes are deployed uniformly at random in $[-0.5,0.5]^d$. We claim that if $\delta \ge  (c \log n /n)^{1/d}$, where $c >1$, then w.h.p., every bin contains at least $d+1$ nodes.  

Fix $k$ and let random variable $\xi_l$ be $1$ if node $l$ is in bin $B_k$ and $0$ otherwise. The variables $\{\xi_l\}_{1\le l \le n}$ are i.i.d. Bernoulli with probability $1/m$ of success. Also $\xi = \sum_{l=1}^n \xi_l$ is the number of nodes in bin $B_k$. By Markov inequality, $\P(\xi \le d) \le \E \{Z^{\xi -d}\}$, for any $0 \le Z \le 1$. Choosing $Z = md/n$, we have 
\begin{align*}
\P (\xi \le d) &\le \E\{Z^{\xi -d}\} = Z^{-d} \prod_{l=1}^{n} \E\{Z^{\xi_l}\}\\
 & = Z^{-d} \left(\frac{1}{m} Z + 1 - \frac{1}{m} \right)^n 
 = \left(\frac{n}{md} \right)^d \left(1 + \frac{d}{n} - \frac{1}{m} \right)^n\\
 & \le \left( \frac{ne}{md} \right)^d e^{-n/m} = \left(\frac{ne\delta^d}{d} \right)^d e^{-n \delta^d}
 \le \Big(\frac{c e\log n}{d}\Big)^d n^{-c}.
\end{align*} 
By applying union bound over all the $m$ bins, we get the desired result.

Now take $\delta = r/(4\sqrt{d})$. Given that $r \ge 4c \sqrt{d} (\log n / n)^{1/d}$, for some $c >1$, every bin contains at least $d+1$ nodes, with high probability.
Note that for any two nodes $x_i,x_j \in[-0.5,0.5]^d$ with $\|x_i - x_j\| \le r/2$, the point $(x_i+x_j)/2$ (the midpoint of the line  segment between $x_i$ and $x_j$) is contained in one of the bins, say $B_k$. For any point $s$ in this bin,

\begin{equation*}
\|s - x_i\| \leq \Big\|s - \frac{x_i + x_j}{2} \Big\|+ \Big\|\frac{x_i + x_j}{2} - x_i \Big\| \leq
\frac{r}{4} + \frac{r}{4} = \frac{r}{2}.
\end{equation*}

Similarly, $\|s - x_j\| \leq r/2$. Since $s \in B_k$ was arbitrary, $\Ci \cap \Cj$ contains all the nodes in $B_k$. This implies the thesis, since $B_k$ contains at least $d+1$ nodes.
\section{Proof of Proposition~\ref{pro:stress_construction}}
\label{App:stress_matrix}
Let $m_k = |\Q_k|$ and define the matrix $R_k$ as follows.
\begin{equation*}
R_k = \begin{bmatrix} 
x_{\Q_k}^{(1)}\Big| \cdots| x_{\Q_k}^{(d)} \Big| u_{\Q_k}
\end{bmatrix}^T \in \reals^{(d+1) \times m_k}.
\end{equation*} 
Compute an orthonormal basis $w_{k,1},\cdots,w_{k,m_k - d-1} \in \reals^{m_k}$ for the nullspace of $R_k$. Then 
\begin{equation*}
\Omega_k = P^{\perp}_{ \langle u^{}_{\Q_k},x^{(1)}_{\Q_k},\cdots,x^{(d)}_{\Q_k}\rangle} = \sum_{l=1}^{m_k -d-1} w_{k,l} w_{k,l}^T.
\end{equation*}
Let $\hat{w}_{k,l} \in \reals^{n}$ be the vector obtained from $w_{k,l}$ by padding it with zeros.  Then, $\hat{\Omega}_k = \sum_{l=1}^{m_k -d-1} \hat{w}_{k,l} \hat{w}_{k,l}^T$. In addition, the $(i,j)$ entry of $\hat{\Omega}_k$ is nonzero only if $i,j \in \Q_k$. Any two nodes in $\Q_k$ are connected in $G$ (Recall that $\Q_k$ is a cliques of $G$). Hence, $\hat{\Omega}_k$ is zero outside $E$. Since $\Omega = \sum_{\Q_k \in \cliq(G)} \hat{\Omega}_k$, the matrix $\Omega$ is also zero outside $E$.

Notice that for any $v \in \langle x^{(1)},\cdots,x^{(d)}, u\rangle$,
\begin{equation*}
\Omega v = (\sum_{\Q_k \in \cliq(G)}  \hat{\Omega}_k) v =\sum_{\Q_k \in \cliq(G)}  \Omega_k v^{}_{\Q_k} = 0.
\end{equation*}

So far we have proved that $\Omega$ is a stress matrix for the framework. Clearly, $\Omega \succeq 0 $, since $\hat{\Omega}_k \succeq 0$ for all $k$. We only need to show that $\rank(\Omega) = n-d-1$. Since $\text{Ker}(\Omega) \supseteq \langle x^{(1)},\cdots,x^{(d)}, u\rangle$, we have $\text{rank}(\Omega) \le n-d-1$. Define
\begin{equation*}
\tilde{\Omega} = \sum_{\Q_k \in \{\C_1,\cdots,\C_n\}} \hat{\Omega}_k.
\end{equation*}

\noindent Since $\Omega \succeq \tilde{\Omega} \succeq 0$, it suffices to show that $\rank(\tilde{\Omega}) \ge n -d -1$. For an arbitrary vector $v \in \text{Ker}(\tilde{\Omega})$,
\begin{equation*}
v^T \tilde{\Omega} v = \sum_{i=1}^n 
\| P^{\perp}_{\langle u^{}_{\C_i},x^{(1)}_{\C_i},\cdots,x^{(d)}_{\C_i}\rangle} v^{}_{\C_i}\|^2 = 0,
\end{equation*}
which implies that $v^{}_{\C_i} \in \langle u^{}_{\C_i},x^{(1)}_{\C_i},\cdots,x^{(d)}_{\C_i} \rangle$. Hence, the vector $v^{}_{\C_i}$ can be written as

\begin{equation*}
v^{}_{\mathcal{C}_i} = \sum_{\ell=1}^d \beta^{(\ell)}_{i} x^{(\ell)}_{\mathcal{C}_i}
+ \beta^{(d+1)}_{i} u^{}_{\mathcal{C}_{i}}
\end{equation*}
for some scalars $\beta^{(\ell)}_i$. Note that for any two nodes $i$ and $j$, the vector $v^{}_{\mathcal{C}_i \cap \mathcal{C}_j}$ has the following two representations

\begin{equation*}
v^{}_{\mathcal{C}_i \cap \mathcal{C}_j} = 
\sum_{\ell=1}^d \beta^{(\ell)}_{i} x^{(\ell)}_{\mathcal{C}_i \cap \mathcal{C}_j}
+ \beta^{(d+1)}_{i} u^{}_{\mathcal{C}_{i} \cap \mathcal{C}_j}=
\sum_{\ell=1}^d \beta^{(\ell)}_{j} x^{(\ell)}_{\mathcal{C}_i \cap \mathcal{C}_j}
+ \beta^{(d+1)}_{j} u^{}_{\mathcal{C}_{i} \cap \mathcal{C}_j}
\end{equation*} 
Therefore,
\begin{equation}
\label{eq:represent}
\sum_{\ell=1}^d (\beta^{(\ell)}_{i} - \beta^{(\ell)}_{j}) x^{(\ell)}_{\mathcal{C}_i \cap \mathcal{C}_j}
+ (\beta^{(d+1)}_{i} - \beta^{(d+1)}_{j}) u^{}_{\mathcal{C}_{i} \cap \mathcal{C}_j} = 0
\end{equation}
According to Proposition~\ref{pro:sampling_lemma}, with high probability, for any two nodes $i$ and $j$ with $\|x_i - x_j\| \leq r/2$, we have $|\mathcal{C}_i \cap \mathcal{C}_j| \geq d+1$. Thus, the vectors $x^{(\ell)}_{\mathcal{C}_i \cap \mathcal{C}_j}$, $u^{}_{\mathcal{C}_i \cap \mathcal{C}_j}$, $1 \leq \ell \leq d$ are linearly independent, since the configuration is generic. More specifically, let $Y$ be the matrix with $d+1$ columns $\{x^{(\ell)}_{\mathcal{C}_i \cap \mathcal{C}_j}\}_{\ell=1}^d$, $u^{}_{\mathcal{C}_i \cap \mathcal{C}_j}$. Then, ${\rm det}(Y^TY)$ is a nonzero polynomial in the coordinates $x^{(\ell)}_k$, $k \in \mathcal{C}_i \cap \mathcal{C}_j$ with integer coefficients. Since the configuration of the points is generic, ${\rm det}(Y^TY) \neq 0$ yielding the linear independence of the columns of $Y$. Consequently, Eq.~\eqref{eq:represent} implies that $\beta^{(\ell)}_i = \beta^{(\ell)}_j$ for any two adjacent nodes in $G(n,r/2)$. Given that $r > 10 \sqrt{d} (\log n / n)^{1/d}$, the graph $G(n,r/2)$ is connected w.h.p. and thus the coefficients $\beta^{(\ell)}_i$ are the same for all $i$. Dropping subscript $(i)$, we obtain

\begin{equation*}
v = \sum_{\ell=1}^d \beta^{(\ell)} x^{(\ell)} + \beta^{(d+1)} u,
\end{equation*} 
proving that $\text{Ker}(\tilde{\Omega}) \subseteq \langle u, x^{(1)},\cdots, x^{(d)}\rangle$, and thus $\text{rank}(\tilde{\Omega}) \ge n -d -1$.  

\section{Proof of Claim~\ref{claim:Laplacian}}
\label{App:Laplacian}
Let $\tilde{G} = (V,\tilde{E})$, where $\tilde{E} = \{(i,j): d_{ij} \leq r/2\}$. The Laplacian of $\tilde{G}$ is denoted by $\tilde{\mathcal{L}}$. We first show that for some constant $C$, 
\begin{equation}
\label{eq:comp_Lt_P}
\tilde{\L} \preceq C \sum_{k=1}^n P_{u_{\C_k}}^{\perp}.
\end{equation}

\noindent Note that,
\begin{align*}
\sum_{k=1}^{n} P_{u_{\C_k}}^{\perp} &=
\sum_{k=1}^{n} (I - \frac{1}{|\C_k|} u_{\C_k} u_{\C_k}^T)
= \sum_{k=1}^{n} \frac{1}{|\C_k|} \Big(\sum_{i,j \in \C_k} M_{ij} \Big)\\
& \succeq \sum_{(i,j) \in \tilde{E}} \Big( \sum_{k:(i,j) \in \C_k} \frac{1}{|\C_k|}\Big) M_{ij}
= \sum_{(i,j) \in \tilde{E}} \Big( \sum_{k \in \C_i \cap \C_j} \frac{1}{|\C_k|}\Big) M_{ij}.
\end{align*}
The inequality follows from the fact that $M_{ij} \succeq \bf{0}$, $\forall i,j$. By application of Remark~\ref{rem:region}, we have $|\C_k| \leq C_1(nr^d)$ and $|\C_i \cap \C_j| \geq C_2nr^d$, for some constants $C_1$ and $C_2$ (depending on $d$) and $\forall k, i, j$. Therefore,
\begin{equation*}
\sum_{k=1}^{n} P_{u_{\C_k}}^{\perp} \succeq \sum_{(i,j) \in \tilde{E}} \frac{C_2}{C_1} M_{ij} = \frac{C_2}{C_1} \tilde{\mathcal{L}}.
\end{equation*}

Next we prove that for some constant $C$,
\begin{equation}
\label{eq:comp_L_Lt1}
\L \preceq C \tilde{\L}.
\end{equation}

\noindent To this end, we use the Markov chain comparison technique.

A path between two nodes $i$ and $j$, denoted by $\gamma_{ij}$, is a sequence of nodes $(i,v_1,\cdots,v_{t-1},j)$, such that the consecutive pairs are connected in $\tilde{G}$. Let $\gamma = (\gamma_{ij})_{(i,j) \in E}$ denote a collection of paths for all pairs connected in $G$, and let $\Gamma$ be the collection of all possible $\gamma$. Consider the probability distribution induced on $\Gamma$ by choosing paths between all connected pairs in $G$ in the following way.

Cover the space $[-0.5,0.5]^d$ with bins of side length $r/(4\sqrt{d})$ (similar to the proof of Proposition~\ref{pro:sampling_lemma}. As discussed there, w.h.p., every bin contains at least one node).  Paths are selected independently for different node pairs. Consider a particular pair $(i,j)$ connected in $G$. Select $\gamma_{ij}$ as follows. If $i$ and $j$ are in the same bin or in the neighboring bins then $\gamma_{ij} = (i,j)$. Otherwise, consider all bins intersecting the line joining $i$ and $j$.  From each of these bins, choose a node $v_k$ uniformly at random. Then the path $\gamma_{ij}$ is $(i,v_1,\cdots,j)$.

In the following, we compute the average number of paths passing through each edge in $\tilde{E}$. The total number of paths is $|E| = \Theta(n^2r^d)$. Also, since any connected pair in $G$ are within distance $r$ of each other and the side length of the bins is $O(r)$, there are $O(1)$ bins intersecting a straight line joining a pair $(i,j) \in E$. Consequently, each path contains $O(1)$ edges. The total number of bins is $\Theta(r^{-d})$. Hence, by symmetry, the number of paths passing through each bin is $\Theta(n^2 r^{2d})$. Consider a particular bin $B$ and the paths passing through it. All these paths are equally likely to choose any of the nodes in $B$. Therefore, the average number of paths containing a particular node in $B$, say $i$, is $\Theta(n^2 r^{2d} / nr^d) = \Theta(nr^d)$. In addition, the average number of edges between $i$ and neighboring bins is $\Theta(nr^d)$. Due to symmetry, the average number of paths containing an edge incident on $i$ is $\Theta(1)$. Since this is true for all nodes $i$, the average number of paths containing an edge is $\Theta(1)$. 

Now, let $v \in \reals^{n}$ be an arbitrary vector. For a directed edge $e \in \tilde{E}$ from $i \to j$, define $v(e) = v_i - v_j$. Also, let $|\gamma_{ij}|$ denote the length of the path $\gamma_{ij}$.
\begin{align}
v^T \L v &= \sum_{(i,j) \in E} (v_i - v_j)^2 = \sum_{(i,j) \in E} \bigg(\sum_{e \in \gamma_{ij}} v(e) \bigg)^2 \nonumber\\
& \leq \sum_{(i,j) \in E} |\gamma_{ij}| \sum_{e \in \gamma_{ij}} v(e)^2 =
\sum_{e \in \tilde{E}} v(e)^2 \sum_{\gamma_{ij} \ni e } |\gamma_{ij}| \nonumber\\
&\leq \gamma_* \sum_{e \in \tilde{E}} v(e)^2 b(\gamma,e), \label{eq:b(gamma,e)}
\end{align}
where $\gamma_*$ is the maximum path lengths and $b(\gamma,e)$ denotes the number of paths passing through $e$ under $\gamma = (\gamma_{ij})$. The first inequality follows from the Cauchy-Schwartz inequality. Since all paths have length $O(1)$, we have $\gamma_* =O(1)$. Also, note that in Eq.~\eqref{eq:b(gamma,e)}, $b(\gamma,e)$ is the only term that depends on the paths.  Therefore, we can replace $b(\gamma,e)$ with its expectation under the distribution on $\Gamma$, i.e., $b(e) = \sum_{\gamma \in \Gamma} \P(\gamma) b(\gamma,e)$. We proved above that the average number of paths passing through an edge is $\Theta(1)$. Hence, $\underset{e \in \tilde{E}}{\max}\; b(\gamma,e) = \Theta(1)$. using these bounds in Eq.~\eqref{eq:b(gamma,e)}, we obtain
\begin{equation}
\label{eq:comp_L_Lt2}
v^T \L v \leq C \sum_{e \in \tilde{E}} v(e)^2 = C v^T \tilde{\L}v,
\end{equation}  
for some constant $C$ and all vectors $v \in \reals^n$. Combining Eqs.~\eqref{eq:comp_Lt_P} and~\eqref{eq:comp_L_Lt2} implies the thesis.
 
\section{Proof of Claim~\ref{claim:concentration_points}}
\label{App:concentration_points}
%
In Remark~\ref{rem:region}, let region $\mathcal{R}$ be the $r/2$-neighborhood of node $i$, and take $c=2$. Then, with probability at least $1 - 2/n^2$, 
\begin{equation}
\label{eq:bound_Ci}
|\Ci| \in np_d + [- \sqrt{4np_d \log n},  \sqrt{4np_d \log n}],
\end{equation}
where $p_d = K_d(r/2)^d$.
\noindent Similarly, with probability at least $1 - 2/n^2$, 
\begin{equation}
\label{eq:bound_tCi}
|\tilde{\Ci}| \in n\tilde{p}_d + [- \sqrt{4n\tilde{p}_d \log n},  \sqrt{4n\tilde{p}_d \log n}],
\end{equation}
where $\tilde{p}_d = K_d(\frac{r}{2})^d (\frac{1}{2}+\frac{1}{100})^d$. By applying union bound over all $1\leq i \leq n$, Eqs.~\eqref{eq:bound_Ci} and~\eqref{eq:bound_tCi} hold for any $i$, with probability at least $1-4/n$. Given that $r > 10 \sqrt{d} (\log n / n)^{\frac{1}{d}}$, the result follows after some algebraic manipulations.

\section{Proof of Claim~\ref{claim:Gstar}}
\label{App:Gstar}
Part $(i)$: Let $\tilde{G} = (V,\tilde{E})$, where $\tilde{E} = \{(i,j): d_{ij} \leq r/2\}$. Also, let $A_{\tilde{G}}$ and $A_{G^*}$ respectively denote the adjacency matrices of the graphs $\tilde{G}$ and $G^*$. Therefore, $A_{\tilde{G}} \in \reals^{n \times n}$ and $A_{G^*} \in \reals^ {N \times N}$, where $N = |V(G^*)| = n(m+1)$. From the definition of $G^*$, we have
\begin{equation}
\label{eq:kron_adj}
A_{G^*} = A_{\tilde{G}} \otimes B, \quad
B = \begin{bmatrix}
1 & \cdots & 1\\
\vdots & \cdots & \vdots\\
1 & \cdots &1
\end{bmatrix}_{(m+1) \times (m+1)} 
\end{equation}
where $\otimes$ stands for the Kronecher product. Hence,
\begin{equation*}
\max_{i \in V(G^*)} \text{deg}_{G^*}(i)  = (m+1)  \max_{i \in V(\tilde{G})} \text{deg}_{\tilde{G}}(i).
\end{equation*}

\noindent Since the degree of nodes in $\tilde{G}$ are bounded by $C(nr^d)$ for some constant $C$, and $m \leq C(nr^d)$ (by definition of $m$ in Claim~\ref{claim:concentration_points}), we have that the degree of nodes in $G^*$ are bounded by $C(nr^d)^2$, for some constant $C$. 

\bigskip
\noindent Part $(ii)$: Let $D_{\tilde{G}} \in \reals^{n \times n}$ be the diagonal matrix with degrees of the nodes in $\tilde{G}$ on its diagonal. Define $D_{G^*} \in \reals^{N \times N}$ analogously. From Eq.~\eqref{eq:kron_adj}, it is easy to see that
\begin{equation*}
(D_{\tilde{G}}^{-1/2} A_{\tilde{G}} D_{\tilde{G}}^{-1/2}) \otimes (\frac{1}{m+1} B) = D^{-1/2}_{G^*} A_{G^*} D^{-1/2}_{G^*}.
\end{equation*}  
Now for any two matrices $\mathcal{A}$ and $\mathcal{B}$, the eigenvalues of $\mathcal{A} \otimes \mathcal{B}$ are all products of eigenvalues of $\mathcal{A}$ and $\mathcal{B}$. The matrix $1/(m+1) B$ has eigenvalues $0$, with multiplicity $m$, and $1$, with multiplicity one. Thereby,
\begin{equation*}
\sigma_{\min}(I - D^{-1/2}_{G^*} A_{G^*}D^{-1/2}_{G^*}) \geq \min\{\sigma_{\min}(I - D_{\tilde{G}}^{-1/2} A_{\tilde{G}} D_{\tilde{G}}^{-1/2}), 1\} \geq Cr^2,
\end{equation*} 
where the last step follows from Remark~\ref{rem:spec_GNR}.
Due to the result of~\cite{butler2008eas} (Theorem 4), we obtain
\begin{equation*}
\sigma_{\min} (\L_{G^*}) \geq d_{\min,G^*}\sigma_{\min}(\L_{n,G^*}),
\end{equation*}
where $d_{\min,G^*}$ denotes the minimum degree of the nodes in $G^*$, and $\L_{n,G^*} = I - D^{-1/2}_{G^*} A_{G^*}D^{-1/2}_{G^*}$ is the normalized Laplacian of $G^*$.  Since $d_{\min,G^*} = (m+1) d_{\min,\tilde{G}} \geq C(nr^d)^2$, for some constant $C$, we obtain
\begin{equation*}
\sigma_{\min}(\L_{G^*}) \geq C(nr^d)^2r^2,
\end{equation*}
for some constant $C$.

\section{Proof of Claim~\ref{claim:Azuma_bound}}
\label{App:Azuma_bound}

Fix a pair $(i,j) \in E(G^*)$. Let $m_{ij} = |\Q_i \cap \Q_j|$, and without loss of generality assume that the nodes in $\Q_i \cap \Q_j$ are labeled with $\{1,\cdots, m_{ij}\}$. Let $z^{(\ell)} = \txl_{\Q_i \cap \Q_j}$, for $1 \le \ell \le d$, and let $z_k = (z_k^{(1)},\cdots, z_k^{(d)})$, for $1 \le k \le m_{ij}$. Define the matrix $M^{(ij)} \in \reals^{d \times d}$ as $M^{(ij)}_{\ell,\ell'} = \langle z^{(\ell)}, z^{(\ell')} \rangle$, for $1\le \ell', \ell \le d$. Finally, let $\beta_{ij} = (\beta^{(1)}_j - \beta^{(1)}_i,\cdots,\beta^{(d)}_j - \beta^{(d)}_i) \in \reals^d$. Then,
\begin{equation}
\label{eqn:Mij_betaij}
\|\sum_{\ell=1}^d (\beta^{(\ell)}_j - \beta^{(\ell)}_i) \txl_{\Q_i \cap \Q_j}\|^2
= \beta_{ij}^T M^{(ij)} \beta_{ij} \ge \sigma_{\min}(M^{(ij)}) \|\beta_{ij}\|^2.
\end{equation}

In the following, we lower bound $\sigma_{\min}(M^{(i,j)})$. Notice that
\begin{equation}
\label{eqn:Mij_def}
M^{(ij)} = \sum_{k=1}^{m_{ij}} z_k z_k^T = \sum_{k=1}^{m_{ij}} \{z_kz_k^T - \E(z_kz_k^T)\} + \sum_{k=1}^{m_{ij}} \E(z_kz_k^T).
\end{equation}

We first lower bound the quantity $\sigma_{\min}(\sum_{k=1}^{m_{ij}} \E(z_k z_k^T))$. Let $S \in \reals^{d \times d}$ be an orthogonal matrix that aligns the line segment between $x_i$ and $x_j$ with $e_1$. Now, let $\hat{z_k} = S z_k$ for $1 \le k \le m_{ij}$. Then,
\begin{equation*}
\sum_{k=1}^{m_{ij}} \E(z_k z_k^T) = \sum_{k=1}^{m_{ij}} S^T \E(\hat{z}_k \hat{z}_k^T) S.
\end{equation*}
The matrix $\E(\hat{z}_k \hat{z}_k^T)$ is the same for all $1 \le k \le m_{ij} $. Further, it is a diagonal matrix whose diagonal entries are bounded from below by $C_1r^2$, for some constant $C_1$. Therefore, $\sigma_{\min}(\sum_{k=1}^{m_{ij}} \E(\hat{z}_k \hat{z}_k^T)) \ge m_{ij} C_1r^2$. Consequently, 
\begin{equation}
\label{eqn:term1}
\sigma_{\min} (\sum_{k=1}^{m_{ij}} \E(z_kz_k^T)) = \sigma_{\min}(\sum_{k=1}^{m_{ij}} \E(\hat{z}_k \hat{z}_k^T)) \ge m_{ij} C_1r^2.
\end{equation}

Let $Z^{(k)} = z_kz_k^T - \E(z_kz_k^T)$, for $1 \le k \le m_{ij}$. Next, we upper bound the quantity $\sigma_{\max}(\sum_{k=1}^{m_{ij}} Z^{(k)})$. Note that for any matrix $A \in \reals ^{d \times d}$,
\begin{align*}
\sigma_{\max}(A) &= \max_{\|x\|=\|y\|=1} x^T A y   \le \max_{\|x\|=\|y\|=1} \sum_{1\le p,q \le d} |A_{pq}| |x_py_q|\\
& \le \max_{1 \le p,q \le d} |A_{pq}| \cdot \max_{\|x\|=1}(\sum_{p=1}^d |x_p|) \cdot \max_{\|y\|=1}(\sum_{q=1}^d |y_q|) \le d \max_{1 \le p,q \le d} |A_{pq}|. 
\end{align*}
Taking $A = \sum_{k=1}^{m_{ij}} Z^{(k)}$, we have
\begin{equation}
\label{eqn:l_2l_inf}
\P\Big(\sigma_{\max}(\sum_{k=1}^{m_{ij}} Z^{(k)}) > \epsilon \Big)
 \le \P \Big( \max_{1\le p, q\le d} \Big|\sum_{k=1}^{m_{ij}} Z^{(k)}_{pq} \Big| > \frac{\epsilon}{d} \Big)
 \le d^2 \max_{1 \le p,q \le d} \P \Big( \Big| \sum_{k=1}^{m_{ij}} Z^{(k)}_{pq} \Big| > \frac{\epsilon}{d}\Big),
\end{equation}
where the last inequality follows from union bound. Take $\epsilon = C_1m_{ij}r^2/2$. Note that $\{Z^{(k)}_{pq}\}_{1\le k \le m_{ij}}$ is a sequence of independent random variables with $\E(Z^{(k)}_{pq}) = 0$, and $|Z^{(k)}_{pq}| \le r^2/4$, for $1\le k \le m_{ij}$. Applying Hoeffding 's inequality,
\begin{align}
\label{eqn:hoeff_bound}
\P \Big( \Big| \sum_{k=1}^{m_{ij}} Z^{(k)}_{pq} \Big| > \frac{C_1 m_{ij}r^2}{2d} \Big) \le
2 \exp \Big( -\frac{2C_1^2 m_{ij}}{d^2}\Big) \leq 2 n^{-3}.
\end{align}
Combining Eqs.~\eqref{eqn:l_2l_inf} and~\eqref{eqn:hoeff_bound}, we obtain
\begin{align}
\label{eqn:term2}
\P \Big( \sigma_{\max}(\sum_{k=1}^{m_{ij}} Z^{(k)}) >  \frac{C_1m_{ij}r^2}{2}\Big) \le 2 d^2 n^{-3}.
\end{align}

\noindent Using Eqs.~\eqref{eqn:Mij_def},~\eqref{eqn:term1} and~\eqref{eqn:term2}, we have
\begin{align*}
\sigma_{\min}(M^{(ij)}) \ge \sigma_{\min}(\sum_{k=1}^{m_{ij}} \E(z_kz_k^T))
 - \sigma_{\max} (\sum_{k=1}^{m_{ij}} Z^{(k)}) \ge \frac{C_1 m_{ij} r^2}{2},
\end{align*}
with probability at least $1- 2d^2n^{-3}$. Applying union bound over all pairs $(i,j) \in E(G^*)$, we obtain that w.h.p., $\sigma_{\min}(M^{(ij)}) \ge C_1 m_{ij} r^2/2 \ge C(nr^d)r^2$, for all $(i,j) \in E(G^*)$. Invoking Eq.~\eqref{eqn:Mij_betaij},
\begin{equation*}
\|\sum_{\ell=1}^d (\beta^{(\ell)}_j - \beta^{(\ell)}_i) \txl_{\Q_i \cap \Q_j}\|^2
\ge C(nr^d) r^2 \|\beta_{ij}\|^2 = C(nr^d)r^2 \sum_{\ell=1}^d (\beta_j^{(\ell)} - \beta_i^{(\ell)})^2.
\end{equation*}
\section{Proof of Claim~\ref{claim:beta_mean}}
\label{App:beta_mean}
\begin{proof}
Let $N = |V(G^*)| = n(m+1)$. Define $\bar{\beta}^{(\ell)} =  (1/N) \sum_{i=1}^N \beta_i^{(\ell)}$ and let $\tilde{v} = v - \sum_{\ell=1}^d \bar{\beta}^{(\ell)} x^{(\ell)}$. Then, the vector $\tilde{v}$ has the following local decompositions.
\begin{equation*}
\tilde{v}_{\Q_i} = \sum_{\ell=1}^{d} (\bl_i - \bar{\beta}^{(\ell)}) \txl_{\Q_i} + \tilde{\gamma}_i u_{\Q_i} + w^{(i)},
\end{equation*}

\noindent where $\tilde{\gamma}_i = \gamma_i - \sum_{\ell=1}^d \bar{\beta}^{(\ell)} \frac{1}{|\Q_i|} \langle x^{(\ell)}_{\Q_i}, u_{\Q_i} \rangle$. For convenience, we establish the following definitions.
\\
$M \in \reals^{d \times d}$ is a matrix with $M_{\ell,\ell'} = \langle \xl, x^{(\ell')} \rangle$. Also, for any $1\leq i \leq N$, define the matrix $M^{(i)} \in \reals^{d \times d}$ as $M^{(i)}_{\ell, \ell'} = \langle \txl_{\Q_i},\tilde{x}^{(\ell')}_{\Q_i} \rangle$. Let $\hat{\beta}^{(\ell)}_i := \bl_i - \bar{\beta}^{(\ell)}$ and $\eta^{(\ell)}_i = \sum_{\ell'} M^{(i)}_{\ell,\ell'} \hb^{(\ell')}_i $. Finally, for any $1 \leq \ell \leq d$, define the matrix $B^{(\ell)} \in \reals^{N \times n}$ as follows.
\begin{equation*}
B^{(\ell)}_{i,j} = 
\begin{cases}
\txl_{\Q_i,j} & \text{if} \; j \in \Q_i\\
0 & \text{if}\; j \notin \Q_i
\end{cases}
\end{equation*}
Now, note that $\langle \tilde{v}_{\Q_i}, \txl_{\Q_i}\rangle = \sum_{\ell'=1}^{d} M^{(i)}_{\ell,\ell'} \hb^{(\ell')}_i = \eta^{(\ell)}_i $. Writing it in matrix form, we have $B^{(\ell)} \tilde{v} = \eta^{(\ell)}$.

Our first lemma provides a lower bound for $\sigma_{\min}(B^{(\ell)})$. For its proof, we refer to Section~\ref{lem:comp_B_L}.

\begin{lem}
\label{lem:comp_B_L}
Let $\tilde{G} = (V,\tilde{E})$, where $\tilde{E} = \{(i,j):d_{ij} \leq r/2\}$ and denote by $\tilde{\L}$ the Laplacian of $\tilde{G}$. Then, there exists a constant $C = C(d)$, such that, w.h.p.
\begin{equation*}
B^{(\ell)} (B^{(\ell)})^T \succeq C(nr^d)^{-1} r^2\tilde{\L}, \quad \forall 1\leq \ell \leq d.
\end{equation*}
\end{lem}

Next lemma establishes some properties of the spectral of the matrices $M$ and $M^{(i)}$. Its proof is deferred to Section~\ref{sec:concentration_M}.

\begin{lem}
\label{lem:concentration_M}
There exist constants $C_1$ and $C_2$, such that, w.h.p.
\begin{equation*}
\sigma_{\min}(M) \geq C_1 n, \quad \quad \sigma_{\max}(M^{(i)}) \leq C_2(nr^d)r^2,
\quad \forall\; 1\leq i \leq N.
\end{equation*}
\end{lem}

\noindent Now, we are in position to prove Claim~\ref{claim:beta_mean}.
Using Lemma~\ref{lem:comp_B_L} and since $\langle \tilde{v}, u\rangle = 0$,
\begin{equation*}
\|\eta^{(\ell)}\|^2 \geq \sigma_{\min}(B^{(\ell)}(B^{(\ell)})^T) \|\tilde{v}\|^2
\ge C(nr^d)^{-1} r^2 \sigma_{\min}(\tilde{\L}) 
\geq C r^4 \|\tilde{v}\|^2,
\end{equation*}
for some constant $C$. The last inequality follows from the lower bound on $\sigma_{\min}(\tilde{\L})$ provided by Remark~\ref{rem:spec_GNR}. Moreover,
\begin{align*}
\bigg[ \sum_{\ell'=1}^d M_{\ell,\ell'} \bar{\beta}^{(\ell')} \bigg]^2 = 
\langle \tilde{v}, \xl \rangle^2 \leq \|\tilde{v}\|^2 \|\xl\|^2
\leq C r^{-4} \|\eta^{(\ell)}\|^2 \|\xl\|^2.
\end{align*}
Summing both hand sides over $\ell$ and using $\|\xl\|^2 \leq Cn$, we obtain
\begin{equation*}
\sum_{\ell=1}^{d} \bigg[ \sum_{\ell'=1}^d M_{\ell,\ell'} \bar{\beta}^{(\ell')} \bigg]^2
\leq C (nr^{-4}) \sum_{\ell=1}^{d} \|\eta^{(\ell)}\|^2.
\end{equation*}
Equivalently,
\begin{equation*}
\sum_{\ell=1}^{d} \langle M_{\ell,\cdot}, \bar{\beta}\rangle ^2
\leq C (nr^{-4}) \sum_{\ell=1}^{d}
\sum_{i=1}^{N} \langle M^{(i)}_{\ell,\cdot}, \hb_i\rangle^2.
\end{equation*}
Here, $\bar{\beta} = (\bar{\beta}^{(1)},\cdots, \bar{\beta}^{(d)}) \in \reals ^d$ and $\hb_i = (\hb_i^{(1)},\cdots,\hb_i^{(d)}) \in \reals^d$. Writing this in matrix form,
\begin{equation*}
\|M \bar{\beta}\|^2 \leq C  (nr^{-4}) \sum_{i=1}^{N}
\|M^{(i)} \hb_i\|^2.
\end{equation*}
Therefore,
\begin{equation*}
\sigma_{\min}^2(M) \|\bar{\beta}\|^2 \leq C (nr^{-4}) \bigg[\max_{1 \leq i \leq N}
\sigma_{\max}^2(M^{(i)}) \bigg] \sum_{i=1}^N \|\hb_i\|^2.
\end{equation*}
Using the bounds on $\sigma_{\min}(M)$ and $\sigma_{\max}(M^{(i)})$ provided in Lemma~\ref{lem:concentration_M}, we obtain 
\begin{equation}
\label{eq:bound_beta_check}
\|\bar{\beta}\|^2 \leq \frac{C}{n} (nr^d)^2 \sum_{i=1}^N \|\hb_i\|^2.
\end{equation}
Now, note that
\begin{align}
&\|\bar{\beta}\|^2  = \sum_{\ell=1}^d (\bar{\beta}^{(\ell)})^2 = 
\sum_{\ell=1}^d \bigg( \frac{\sum_{i=1}^N \bl_i}{N}\bigg)^2=
\frac{1}{N} \sum_{\ell=1}^d \|P_{u} \beta^{(\ell)}\|^2,\label{eq:beta_check_eq}\\
&\sum_{i=1}^N \|\hb_i\|^2 = \sum_{\ell=1}^d \sum_{i=1}^N (\bl_i - \bar{\beta}^{(\ell)})^2 = 
\sum_{\ell=1}^{d} \|P_{u}^{\perp} \beta^{(\ell)}\|^2. \label{eq:beta_hat}
\end{align}
Consequently,
\begin{align*}
\sum_{\ell=1}^{d} \|\bl\|^2 &= \sum_{\ell=1}^d \|P_u \beta^{(\ell)}\|^2 + \sum_{\ell=1}^d \|P^{\perp}_u \beta^{(\ell)}\|^2\\
& \stackrel{(a)}{=} N \|\bar{\beta}\|^2 + \sum_{\ell=1}^d \|P^{\perp}_u \beta^{(\ell)}\|^2\\
& \stackrel{(b)}{\le} \frac{CN}{n} (nr^d)^2 \sum_{i=1}^N \|\hb_i\|^2 +  \sum_{\ell=1}^d \|P^{\perp}_u \beta^{(\ell)}\|^2\\
& \stackrel{(c)}{=} (1 + \frac{CN}{n}(nr^d)^2) \sum_{\ell=1}^{d} \|P_{u}^{\perp} \beta^{(\ell)}\|^2 \\
& \leq C(nr^d)^3 \sum_{\ell=1}^{d} \|P_{u}^{\perp} (\beta^{(\ell)})\|^2.
\end{align*}
Here, $(a)$ follows from Eq.~\eqref{eq:beta_check_eq}; $(b)$ follows from Eq.~\eqref{eq:bound_beta_check} and $(c)$ follows from Eq.~\eqref{eq:beta_hat}. The result follows.
\end{proof}

\subsection{Proof of Lemma~\ref{lem:comp_B_L}}
Recall that $e_{ij} \in \reals^{n}$ is the vector with $+1$ at the $i^{th}$ position, $-1$ at the $j^{th}$ position and zero everywhere else. For any two nodes $i$ and $j$ with $\|x_i - x_j\| \leq r/2$, choose a node $k \in \tilde{\C}_i \cap \tilde{\C}_j$ uniformly at random and consider the cliques $\Q_1 = \C_k$, $\Q_2 = \C_k \backslash{i}$, and $\Q_3 = \C_k \backslash{j}$. Define $S_{ij} = \{\Q_1,\Q_2,\Q_3\}$. Note that $S_{ij} \subset \cliq^*(G)$.

Let $a_1$, $a_2$ and $a_3$ respectively denote the center of mass of the points in cliques $\Q_1$, $\Q_2$ and $\Q_3$. Find scalars $\xi^{(ij)}_1$, $\xi^{(ij)}_2$, and $\xi^{(ij)}_3$, such that
\begin{equation}
\label{eq:weights}
\begin{split}
\begin{cases}
\xi^{(ij)}_1 + \xi^{(ij)}_2 + \xi^{(ij)}_3 = 0,\\
\xi^{(ij)}_1 a^{(\ell)}_1 + \xi^{(ij)}_2 a^{(\ell)}_2 + \xi^{(ij)}_3 a^{(\ell)}_3 = 0,\\
\xi^{(ij)}_1 (x^{(\ell)}_i -  a^{(\ell)}_1) + \xi^{(ij)}_3 (x^{(\ell)}_i - a^{(\ell)}_3) = 1.
\end{cases}
\end{split}
\end{equation}
Note that the space of the solutions of this linear system of equations is invariant to translation of the points. Hence, without loss of generality, assume that $\underset{l \in \Q_1, l \neq i,j}{\sum} x_l = 0$. Also, let $m = |\C_k|$. Then, it is easy to see that
\begin{align*}
 a_1 = \frac{x_i+x_j}{m}, \quad a_2 = \frac{x_j}{m}, \quad a_3 = \frac{x_i}{m},
 \end{align*}
 and the solution of Eqs.~\eqref{eq:weights} is given by
 \begin{align*}
 \xi^{(ij)}_1 = \frac{\xl_j - \xl_i} {\xl_j ( \xl_i - \dfrac{\xl_j}{m} )}, \quad
\xi^{(ij)}_2 = -\frac{\xl_j} {\xl_j( \xl_i -\dfrac{\xl_j}{m} )}, \quad
\xi^{(ij)}_3 = \frac{\xl_i}{\xl_j ( \xl_i - \dfrac{\xl_j}{m} )}.
\end{align*} 

Firstly, observe that\\

\noindent $\bullet \quad \xi^{(ij)}_1 (\xl_i -  a^{(\ell)}_1) + \xi^{(ij)}_2 (\xl_i - a^{(\ell)}_3) = 1$.\\
\noindent $\bullet \quad \xi^{(ij)}_1 (\xl_j -  a^{(\ell)}_1) + \xi^{(ij)}_2 (\xl_j - a^{(\ell)}_2) = -1$.\\
\noindent $\bullet \quad$ For $t \in \C_k$ and $t \neq i,j$: 
\begin{align*}
&\xi^{(ij)}_1 (\xl_t -  a^{(\ell)}_1) + \xi^{(ij)}_2 (\xl_t -  a^{(\ell)}_2) + \xi^{(ij)}_3 (\xl_t -  a^{(\ell)}_3)\\ 
&= (\xi^{(ij)}_1 +  \xi^{(ij)}_2 + \xi^{(ij)}_3) \xl_t - 
(\xi^{(ij)}_1a^{(\ell)}_1+\xi^{(ij)}_2a^{(\ell)}_2+\xi^{(ij)}_3a^{(\ell)}_3) = 0.
\end{align*}

Therefore,
\begin{equation}
\label{eqn:ksi_mat}
\xi^{(ij)}_1 \txl_{\Q_1,t} +\xi^{(ij)}_2 \txl_{\Q_2,t} +\xi^{(ij)}_3 \txl_{\Q_3,t} =
\begin{cases}
1 & \text{if } t =i,\\
-1 & \text{if } t = j,\\
0 & \text{if } t \in \C_k, t\neq i,j
\end{cases}
\end{equation}

Let $\xi^{(ij)} \in \reals^{N}$ be the vector with $\xi^{(ij)}_1$, $\xi^{(ij)}_2$ and $\xi^{(ij)}_3$ at the positions corresponding to the cliques $\Q_1$, $\Q_2$, $\Q_3$ and zero everywhere else. Then, Eq.~\eqref{eqn:ksi_mat} gives $(B^{(\ell)})^T \xi^{(ij)} = e_{ij}$.

Secondly, note that $\|\xi^{(ij)}\|^2 = (\xi^{(ij)}_1)^2 + (\xi^{(ij)}_2)^2 +(\xi^{(ij)}_3)^2 \leq \dfrac{C}{r^2}$, for some constant $C$.

Now, we are in position to prove Lemma~\ref{lem:comp_B_L}.
\\
For any vector $z \in \reals^n$, we have
\begin{align*}
\langle z, \tilde{\L} z \rangle &= 
\sum_{(i,j) \in \tilde{E}} \langle e_{ij}, z \rangle^2 =
\sum_{(i,j) \in \tilde{E}} \langle \xi^{(ij)}, B^{(\ell)}z \rangle^2
= \sum_{(i,j) \in \tilde{E}} \bigg( \sum_{\Q_t \in S_{ij}} \xi^{(ij)}_t \langle B^{(\ell)}_{\Q_t,\cdot}, z \rangle\bigg)^2\\
& \leq \sum_{(i,j) \in \tilde{E}} \bigg( \sum_{\Q_t \in S_{ij}} [\xi^{(ij)}_t]^2\bigg)
\bigg( \sum_{\Q_t \in S_{ij}} \langle B^{(\ell)}_{\Q_t,\cdot}, z \rangle ^ 2\bigg) 
\leq \max_{(i,j) \in \tilde{E}} \|\xi^{(ij)}\|^2 \sum_{\Q_t} \langle B^{(\ell)}_{\Q_t,.},
z \rangle^2 ( \sum_{S_{ij} \ni \Q_t} 1)\\
&\leq \frac{C}{r^2} (nr^d) \|B^{(\ell)} z\|^2.
\end{align*}
Hence, $B^{(\ell)} (B^{(\ell)})^T \succeq C(nr^d)^{-1} r^2 \tilde{\L}$.

\subsection{Proof of Lemma~\ref{lem:concentration_M}}
\label{sec:concentration_M}
First, we prove that $\sigma_{\min}(M) \geq C n$, for some constant $C$.

 By definition, $M = \sum_{i=1}^n x_i x_i^T$. Let $Z_i = x_i x_i^T \in \reals^{d \times d}$, and $\bar{Z} = 1/n \sum_{i=1}^n Z_i$. Note that $\{Z_i\}_{1 \le i \le n}$ is a sequence of i.i.d. random matrices with $Z = \E(Z_i) = 1/12 I_{d \times d}$. By Law of large number we have $\bar{Z} \to Z$, almost surely. In addition, since $\sigma_{\max}(.)$ is a continuos function of its argument, we obtain $\sigma_{\max}(\bar{Z} - Z) \to 0$, almost surely. Therefore,

\begin{align*}
\sigma_{\min}(M) = n\sigma_{\min}(\bar{Z}) \ge n\Big(\sigma_{\min}(Z) - \sigma_{\max}(\bar{Z} - Z)\Big) = n\Big(\frac{1}{12} - \sigma_{\max}(\bar{Z} - Z)\Big),
\end{align*}
whence we obtain $\sigma_{\min}(M) \ge n/12$, with high probability.

%
%
Now we pass to proving the second part of the claim.
\\
Let $m_i = |\Q_i|$, for $1 \leq i \leq N$. Since $M^{(i)} \succeq 0$, we have\begin{equation*}
\sigma_{\max}(M^{(i)}) \le \Tr(M^{(i)}) = \sum_{\ell=1} \|\txl_{\Q_i}\|^2  \leq Cm_ir^2.
\end{equation*} 
With high probability, $ m_i \le C(nr^d)$, $\forall 1 \leq i \leq N$, and for some constant $C$. Hence, 
\begin{equation*}
\max_{1 \leq i \leq N} \sigma_{\max}(M^{(i)}) \leq C(nr^d)r^2,
\end{equation*}
with high probability. The result follows.
\section{Proof of Proposition~\ref{pro:aux2}}
\label{App:aux2}
 \begin{proof}
  Recall that $\tR = XY^T + Y X^T$ with $X, Y \in \reals^{n \times d}$ and $Y^T u = 0$. By triangle inequality, we have 
  \begin{align*}
  |\langle x_i - x_j,y_i - y_j\rangle| &\geq
  |\langle x_i,y_j \rangle + \langle x_j,y_i \rangle | -
  |\langle x_i,y_i\rangle| - |\langle x_j,y_j\rangle|\\
  &= |\tR_{ij}| - \frac{|\tR_{ii}|}{2} - \frac{|\tR_{jj}|}{2}.
  \end{align*}
  Therefore,
  \begin{equation}
  \label{eq:triangle_bound1}
  \sum_{i,j} |\langle x_i - x_j,y_i -y_j\rangle| \geq \sum_{i,j} |\tR_{ij}| - n \sum_{i} |\tR_{ii}|.
  \end{equation}
  
  Again, by triangle inequality,
  \begin{align}
  \sum_{ij} |\langle x_i -x_j, y_i -y_j\rangle| &\geq
  \sum_{i} | n\langle x_i,y_i\rangle  + \sum_{j} \langle x_j,y_j\rangle
  -\langle x_i,\sum_{j} y_j \rangle    -  \langle \sum_{j} x_j,y_i \rangle|\nonumber\\
  &= n \sum_{i} |\langle x_i,y_i \rangle + \frac{1}{n} \sum_{j} \langle x_j,y_j\rangle|,
    \label{eq:triangle_bound2}
  \end{align}
  where the last equality follows from $Y^T u = 0$ and $X^T u = 0$.
 \begin{remark}
 \label{rem:bound3}
 For any $n$ real values $\xi_1,\cdots,\xi_n$, we have
 \begin{equation*}
 \sum_{i} |\xi_i + \bar{\xi}| \geq \frac{1}{2} \sum_{i} |\xi_i|,
 \end{equation*}
 where $\bar{\xi} = (1/n) \sum_{i} \xi_i$.
 \end{remark}
 
 \begin{proof}[Proof (Remark~\ref{rem:bound3})] 
Without loss of generality, we assume $\bar{\xi} \geq 0$. Then,
\begin{equation*}
\sum_{i} |\xi_i + \bar{\xi}| \geq \sum_{i: \xi_i \geq 0} \xi_i \geq
\frac{1}{2} (\sum_{i: \xi_i \geq0} \xi_i - \sum_{i: \xi_i < 0} \xi_i) = 
\frac{1}{2} \sum_{i} |\xi_i|,
\end{equation*}
where the second inequality follows from $\sum_{i} \xi_i = n \bar{\xi} \geq 0$.
\end{proof}
 
 Using Remark~\ref{rem:bound3} with $\xi_i = \langle x_i, y_i\rangle$, Eq.~\eqref{eq:triangle_bound2} yields
 \begin{equation}
 \label{eq:triangle_bound3}
  \sum_{ij} |\langle x_i -x_j, y_i -y_j\rangle| \geq
  \frac{n}{2} \sum_{i} |\langle x_i,y_i\rangle| = \frac{n}{4} \sum_{i} |\tR_{ii}|.
 \end{equation}
 Combining Eqs.~\eqref{eq:triangle_bound1} and~\eqref{eq:triangle_bound3}, we obtain
 \begin{equation}
 \|\cR_{K_n,X}(Y)\|_1 = \sum_{ij} |\langle x_i -x_j, y_i -y_j\rangle| \geq \frac{1}{5} \|\tR\|_1.
 \end{equation}
 which proves the desired result.
 \end{proof}
 
\section{Proof of Lemma~\ref{lem:congestion}}
\label{App:chain_exist}

We will compute the average number of chains passing through a particular edge in the order notation. Notice that the total number of chains is $\Theta(n^2)$ since there are $n \choose 2$ node pairs. Each chain has $O(1/r)$ vertices and thus intersects $O(1/r)$ bins. The total number of bins is $\Theta(1/r^d)$. Hence, by symmetry, the number of chains intersecting each bin is $\Theta (n^2 r^{d-1})$. Consider a particular bin $B$, and the chains intersecting it. Such chains are equally likely to select any of the nodes in $B$. Since the expected number of nodes in $B$ is $\Theta(nr^d)$, the average number of chains containing a particular node, say $i$, in $B$, is $\Theta(n^2 r^{d-1} / nr^{d}) = \Theta(nr^{-1})$. Now consider node $i$ and one of its neighbors in the chain, say $j$. Denote by $B^*$ the bin containing node $j$. The number of edges between $i$ and $B^*$ is $\Theta(nr^d)$.  Hence, by symmetry, the average number of chains containing an edge incident on $i$ will be $\Theta(n r^{-1}/ nr^d) = \Theta(r^{-d-1})$.  This is true for all nodes. Therefore, the average number of chains containing any particular edge is $O(r^{-d-1})$. In other words, on average, no edge belongs to more than $O(r^{-d-1})$ chains. 

\section{The Two-Part Procedure for General $d$}
\label{App: stage_1_G}
In proof of Lemma~\ref{lem:cheeger_aux}, we stated a two-part procedure to find the values $\{\lambda_{lk}\}_{(l,k) \in E(G_{ij})}$ that satisfy Eq.~\eqref{eq:constraints}. Part $(i)$ of the procedure was demonstrated for the special case $d=2$. Here, we discuss this part for general $d$. 

Let $G_{ij} = \{i\} \cup \{j\} \cup H_1 \cup \cdots \cup H_k$ be the chain between nodes $i$ and $j$. Let $\mathcal{F}_p = H_p \cap H_{p+1}$. Without loss of generality, assume $V(\mathcal{F}_p) = \{1,2,\cdots,q\}$, where $q = 2^{d-1}$.  The goal is to find a set of forces, namely $f_1,\cdots,f_q$, such that
\begin{equation}
\begin{split} \label{eq:force_constraint_G}
\sum_{i=1}^{q} f_i &= x_m, \quad 
\sum_{i=1}^{q} f_i \wedge x_i = 0,\\
&\sum_{i=1}^{q} \|f_i\|^2 \leq C \|x_m\|^2.
\end{split}
\end{equation}
%
It is more convenient to write this problem in matrix form. Let $X = [x_1, x_2, \cdots, x_{q}] \in \reals^{d \times q}$ and $\Phi = [f_1, f_2, \cdots, f_q] \in \reals^{d \times q}$. Then, the problem can be recast as finding a matrix $\Phi \in \reals ^{d \times d}$, such that,

\begin{equation}
\begin{split} \label{eq:opt_G}
\Phi u &= x_m, \quad 
X \Phi^T = \Phi X^T,\\
&\|\Phi\|_F^2 \leq C \|x_m\|^2.
\end{split}
\end{equation}

Define $\tilde{X} = X (I - 1/q u u^T)$, where $I \in \reals^{q \times q}$ is the identity matrix and $u \in \reals^q$ is the all-ones vector. Let
%
%
%
%
\begin{equation}
\label{eqn:phi_nominate}
\Phi = \frac{1}{q} x_m u^T + (\frac{1}{q} Xux_m^T + S) (\tilde{X}\tilde{X}^T)^{-1} \tilde{X},
\end{equation}
where $S \in \reals^{d \times d}$ is an arbitrary symmetric matrix. Observe that
\begin{equation}
\Phi u = x_m, \quad X \Phi^T = \Phi X^T.
\end{equation}
Now, we only need to find a symmetric matrix $S \in \reals^{d \times d}$ such that the matrix $\Phi$ given by 
Eq.~\eqref{eqn:phi_nominate} satisfies $\|\Phi\|_F \le C \|x_m\|$. Without loss of generality, assume that the vector $x_m$ is in the direction $e_1= (1,0,\cdots, 0) \in \reals^d$. Let $x_c = \frac{1}{q} X u$ be the center of the nodes $\{x_i\}_{i=1}^{q}$, and let $x_c = (x_c^{(1)},\cdots,x_c^{(d)})$. Take $S = - \|x_m\| x_c^{(1)} e_1 e_1^T$. From the construction of the chain $G_{ij}$, the nodes $\{x_i\}_{i=1}^{q}$ are obtained by wiggling the vertices of a hypercube aligned in the direction $x_m/\|x_m\| = e_1$, and with side length $\tilde{r} = 3r/4\sqrt{2}$. (each node wiggles by at most $\frac{r}{8}$). Therefore, $x_c$ is almost aligned with $e_1$, and has small components in the other directions. Formally, $|x_c^{(i)}| \leq \frac{r}{8}$, for $2 \leq i \leq d$. Therefore
\begin{align*}
\frac{1}{q} X u x_m^T + S &= (\sum_{i=1}^{d} x_c^{(i)} e_i) \cdot
(\|x_m\| e_1)^T - \|x_m\| x_c^{(1)} e_1 e_1^T\\ 
&= \sum_{i=2}^{d} \|x_m\| x_c^{(i)} e_i e_1 ^T.
\end{align*}
Hence, $\frac{1}{q} Xu x_m^T  + S \in \reals^{d \times d}$ has entries bounded by $\frac{r}{8} \|x_m\|$. In the following we show that there exists a constant $C= C(d)$, such that all entries of $(\tilde{X}\tilde{X}^T)^{-1} \tilde{X}$ are bounded by $C/r$. Once we show this, it follows that
\begin{equation*}
\|(\frac{1}{q} X u x_m^T +  S) (\tilde{X}\tilde{X}^T)^{-1} \tilde{X}\|_F \leq C \|x_m\|,
\end{equation*}
for some constant $C = C(d)$. Therefore,
\begin{equation*}
\|\Phi\|_F \leq \|\frac{1}{q} x_m u^T\|_F + \| (\frac{1}{q} X u x_m^T +  S) (\tilde{X}\tilde{X}^T)^{-1} \tilde{X}\|_F \leq C\|x_m\|, 
\end{equation*}
for some constant $C$.

We are now left with the task of showing that all entries of $(\tilde{X}\tilde{X}^T)^{-1} \tilde{X}$ are bounded by $C/r$, for some constant $C$.

The nodes $x_i$ were obtained by wiggling the vertices of a hypercube of side length $\tilde{r} = 3r / 4 \sqrt{2}$. (each node wiggles by at most $r/8$). Let $\{z_i\}_{i=1}^{q}$ denote the vertices of this hypercube, and thus $\|x_i - z_i\| \leq \frac{r}{8}$. Define
 \begin{equation*}
 Z = \frac{1}{\tilde{r}} [z_1,\cdots, z_q], \quad \quad \delta Z = \frac{1}{\tilde{r}}\tilde{X} - Z.
 \end{equation*}
Then, $\tilde{X} \tilde{X}^T = \tilde{r}^2 (Z+ \delta Z) (Z +\delta Z )^T  = \tilde{r}^2 (ZZ^T+ \bar{Z})$, where $\bar{Z} = Z (\delta Z)^T + (\delta Z) Z^T + (\delta Z)(\delta Z)^T$. 
Consequently,
\begin{equation*}
(\tilde{X} \tilde{X}^T)^{-1} \tilde{X}=
\frac{1}{\tilde{r}} (ZZ^T+\bar{Z})^{-1} (Z+ \delta Z)
\end{equation*}
Now notice that the columns of $Z$ represent the vertices of a unit $(d-1)$-dimensional hypercube. Also, the norm of each column of $\delta Z$ is bounded by $\frac{r}{8\tilde{r}} < \frac{1}{4}$. Therefore, $\sigma_{\min} (ZZ^T + \bar{Z}) \geq C$, for some constant $C = C(d)$. Hence, for every $1 \leq i \leq q$
\begin{equation*}
 \|(\tilde{X} \tilde{X}^T)^{-1} \tilde{X} e_i\| \leq \frac{1}{\tilde{r}} \sigma_{\min}^{-1}
 (ZZ^T+ \bar{Z})\; \|(Z + \delta Z)e_i\| \leq \frac{C}{r},
\end{equation*}
for some constant $C$. Therefore, all entries of $(\tilde{X} \tilde{X}^T)^{-1} \tilde{X}$ are bounded by $C/r$.
\section{Proof of Remark~\ref{rem:chord_dist}}
\label{App:chord_dist}
Let $\theta$ be the angle between $a$ and $b$ and define $a_{\perp} = \frac{b - \cos(\theta)a}{\|b - \cos(\theta)a\|}$. Therefore, $b = \cos(\theta)a + \sin(\theta) a_{\perp}$. In the basis $(a, a_{\perp})$, we have
\begin{equation*}
aa^T =
 \begin{bmatrix}
 1 & 0 \\
 0 & 0
 \end{bmatrix}, \quad
 bb^T = 
 \begin{bmatrix}
 \cos^2(\theta)& \sin(\theta) \cos(\theta) \\
 \sin(\theta)\cos(\theta) & \sin^2(\theta)
 \end{bmatrix}.
\end{equation*}
Therefore,
\begin{equation*}
\|aa^T - bb^T\|_2 = \bigg \|
\begin{bmatrix}
 \sin^2(\theta)& -\sin(\theta) \cos(\theta) \\
 -\sin(\theta)\cos(\theta) & -\sin^2(\theta)
 \end{bmatrix}
\bigg \|_2 = |\sin(\theta)| = \sqrt{1- (a^Tb)^2}.
\end{equation*}

\section{Proof of Remark~\ref{rem:Weyl}}
\label{App:Weyl}

\begin{proof}
Let $\{\tilde{\lambda}_i\}$ be the eigenvalues of $\tilde{A}$ such that $\tilde{\lambda}_1 \geq \tilde{\lambda}_2 \geq \cdots \geq \tilde{\lambda}_p$. Notice that
\begin{align*}
\|A- \tilde{A}\|_2 &\geq v^T (\tilde{A} -A) v\\
&\geq \tilde{\lambda}_p (v^T \tilde{v})^2 +
\tilde{\lambda}_{p-1} \|P_{\tilde{v}^{\perp}} (v)\|^2 - \lambda_p\\
&= \tilde{\lambda}_p (v^T \tilde{v})^2 +
\tilde{\lambda}_{p-1} (1- (v^T \tilde{v})^2) - \lambda_p\\
&= (\tilde{\lambda}_p - \tilde{\lambda}_{p-1}) (v^T \tilde{v})^2+
\tilde{\lambda}_{p-1} - \lambda_p.
\end{align*}
Therefore, 
\begin{equation*}
(v^T \tilde{v})^2 \geq \frac{\tilde{\lambda}_{p-1} - \lambda_p - \|A -\tilde{A}\|_2}{\tilde{\lambda}_{p-1} - \tilde{\lambda}_p}.
\end{equation*}
Furthermore, due to Weyl's inequality, $|\tilde{\lambda}_i - \lambda_i| \leq \|A - \tilde{A}\|_2$. Therefore,

\begin{equation}
\label{eq:Weyl_bound_prod}
(v^T \tilde{v})^2 \geq \frac{\lambda_{p-1} - \lambda_p - 2 \|A -\tilde{A}\|_2}{\lambda_{p-1} - \lambda_p + 2 \|A - \tilde{A}\|_2},
\end{equation}
which implies the thesis after some algebraic manipulations. 
\end{proof}

%
\newpage
\section{Table of Symbols}
\label{App:tab_notation}
\begin{table}[h!]
\begin{center}
{
\renewcommand{\arraystretch}{1.04}
\begin{tabular}{|c|l|}
\hline
$n$ & number of nodes\\
$d$ & dimension (the nodes are scattered in $[-0.5,0.5]^d$)\\
$L \in \reals^{n \times n}$ & $I - \frac{1}{n}uu^T$, where $I$ is the identity matrix and $u$ is the all-ones vector\\                                                                    
$x_i \in \reals^d$ & coordinate of node $i$,  for $1\le i\le n$\\                                                                                 
$x^{(\ell)} \in \reals^n$ & the vector containing the $\ell^{th}$ coordinate of the nodes, for $1 \le \ell \le d$\\ 
$X \in \reals^{n \times d}$ & the (original) position matrix\\
$\hX$ & estimated position matrix \\
$Q \in \reals^{n \times n}$ & Solution of SDP in the first step of the algorithm\\
$Q_0 \in \reals^{n \times n}$ & Gram matrix of the node (original) positions, namely $Q_0 = XX^T$\\
Subspace $V$ & the subspace spanned by vectors $x^{(1)}, \cdots,x^{(d)}, u$\\
$R \in \reals^{n \times n}$ & $Q- Q_0$\\
$\tilde{R} \in \reals^{n \times n}$ & $P_VRP_V + P_VRP_V^{\perp}+ P_V^{\perp} R P_V$\\ 
$R^{\perp} \in \reals^{n \times n}$ & $P_V^{\perp} R P_V^{\perp}$\\
$\C_i$ & $\{j \in V(G): d_{ij} \le r/2\}$ (the nodes in $\C_i$ form a clique in $G(n,r)$)\\ 
$S_i$ & $\{\C_i\} \cup \{\C_i \backslash k\}_{k \in \C_i}$\\
$\cliq(G)$ & $S_1 \cup \cdots \cup S_n$\\
$\tilde{\C}_i$ & $\{j \in V(G): d_{ij} \le r/2(1/2+1/100)\}$\\
$\tilde{S}_i$ & $\{\C_i, \C_i \backslash i_1,\cdots,\C_i \backslash i_m\}$, where $i_1,\cdots,i_m$ are the $m$ nearest neighbors of node $i$\\
$\cliq^*(G)$ & $\{\tilde{S}_1 \cup \cdots \cup \tilde{S}_n\}$\\
$G$ & $G(n,r)$\\
$\tilde{G}$ & $G(n,r/2)$\\
$G_{ij}$ & the chain between nodes $i$ and $j$\\
$G^*$ & the graph corresponding to $\cliq^*(G)$ (see page 13)\\
$N$ & number of vertices in $G^*$\\
$\L \in \reals^{n \times n}$ & the Laplacian matrix of the graph $G$\\
$\tilde{\L} \in \reals^{n \times n}$ & the Laplacian matrix of the graph $\tilde{G}$\\
$\Omega \in \reals^{n \times n}$ & stress matrix\\
$R_G(X) \in \reals^{|E| \times dn}$ & rigidity matrix of the framework $G_X$\\
$\cR_{G,X}(Y): \reals^{n \times n} \to \reals^{E}$ & For a matrix $Y \in \reals^{n \times d}$, with rows $y_i^T, i=1,\cdots,n$,\\
& $\cR_{G,X}(Y) = R_G(X)\mathcal{Y}$, where $\mathcal{Y} = [y_1^T,\cdots,y_n^T]^T$\\
$x^{(\ell)}_{\Q_i} \in \reals^{|\Q_i|}$ & restriction of vector $x^{(\ell)}$ to indices in $\Q_i$, for $1\le \ell \le d$ and $\Q_i \in \cliq(G)$\\
 $\txl_{\Q_i} \in \reals^{|\Q_i|}$ & component of $x^{(\ell)}_{\Q_i}$ orthogonal to the all-ones vector $u_{\Q_i}$, i.e., $P_{u_{\Q_i}}^{\perp} x^{(\ell)}_{\Q_i}$\\
 $\beta^{(\ell)}_i$ & coefficients in local decomposition of an arbitrary (fixed) vector $v \in V^{\perp}$\\
 & $(v_{\Q_i} = \sum_{\ell=1}^d \beta_i^{(\ell)} \txl_{\Q_i} + \gamma_i u_{\Q_i} + w^{(i)})$\\
  $\beta^{(\ell)} \in \reals^N$ & $(\beta^{(\ell)}_1,\cdots,\beta^{(\ell)}_N)$, for $\ell =1, \cdots d$\\
 $\bar{\beta}^{(\ell)}$ & average of numbers $\beta^{(\ell)}_i$, i.e., ($1/N) \sum_{i=1}^N \beta^{(\ell)}_i$\\
 $\hat{\beta}^{(\ell)}_i$ & $\beta^{(\ell)}_i - \bar{\beta}^{(\ell)}$\\
 $\hat{\beta}_i \in \reals^d$ & $(\hat{\beta}^{(1)}_i, \cdots, \hat{\beta}^{(d)}_i)$, for $i=1,\cdots,N$\\
\hline
\end{tabular}
}
\end{center}
\vspace{-.6cm}
\caption{Table of Symbols}
\label{tab:notation}
\end{table}

\newpage
\bibliographystyle{abbrv}
\bibliography{sigproc}
\end{document}